\newtheorem{thm}{Theorem}[section]
\newtheorem{prop}[thm]{Proposition}
\newtheorem{lem}[thm]{Lemma}
\newtheorem{cor}[thm]{Corollary}
\newtheorem{quest}[thm]{Question}
\newtheorem{com}[thm]{Comment}
\theoremstyle{definition}
\newtheorem{rem}[thm]{Remark}
\newcommand{\bN}{\mathbb{N}}
\newcommand{\bR}{\mathbb{R}}
\newcommand{\bZ}{\mathbb{Z}}
\newcommand{\hana}{\mathfrak}
\newcommand{\hen}{\mathcal}
\newcommand{\nsl}{\trianglelefteq}
\newcommand{\varSi}{\varSigma}
\title[{}]
{{Smooth finite group actions on homology six-spheres 
with odd Euler characteristic fixed point sets}}
\author[S.~Tamura]{Shunsuke Tamura}
\subjclass[2020]{Primary 57S25; Secondary 55M35}
\keywords{finite group action, fixed point, tangential representation}
\address{{National Institute of Technology},
{Tsuyama College},
{624-1 Numa, Tsuyama},
{Okayama 708-8509 Japan}}
\email{tamura\_s@tsuyama.kosen\textrm{-}ac.jp}
\begin{document}

\maketitle

\begin{abstract}
In this paper, we prove that 
if a finite group $G$ acts smoothly and effectively 
on an integral homology $6$-sphere 
and the $G$-fixed-point set has an odd Euler characteristic,
then the acting group $G$ is isomorphic to either
the alternating group $A_5$ on $5$ letters,
the symmetric group $S_5$ on $5$ letters,
or the Cartesian product $A_5 \times C_2$,
where $C_2$ is a group of order $2$,
and the $G$-fixed-point set consists 
of precisely one point.
\end{abstract}

\section{Introduction}

Throughout this paper,
let $G$ be a finite group and $S^n$ the standard $n$-sphere,
and both of manifolds and group actions on manifolds
are assumed to be smooth.
Let $\bZ$ be the ring of integers and 
$\bZ_{p}$ the finite field of order $p$,
where $p$ is a prime.
For $R = \bZ$ or $\bZ_{p}$,
an {\it $R$-homology $n$-sphere} refers to
an $n$-dimensional closed manifold
that has the same $R$-homology groups as the $n$-sphere $S^n$.
For simplicity,
a $\bZ$-homology sphere will be called 
a {\it homology sphere}.
For a nonnegative integer $m$ and a manifold $M$ with $G$-action,
we call the $G$-action on $M$
an {\it $m$-fixed-point $G$-action}
if the $G$-fixed-point set $M^{G}$ consists of exactly $m$-points (including the empty set).
When $m=1$ (resp. $m=0$, $m \equiv 1\;{\rm mod}\;2$),
an $m$-fixed-point $G$-action is called 
a {\it one-fixed-point $G$-action} 
(resp. a {\it fixed-point-free $G$-action}, an {\it odd-fixed-point $G$-action}).
In this paper, as a generalization of 
one-fixed-point and odd-fixed-point $G$-actions,
we study  {\it odd-Euler-characteristic $G$-actions},
which are $G$-actions whose $G$-fixed-point sets 
have odd Euler characteristics.

Various topologists have long studied the following question:

\begin{quest}\label{quest:1}
For a given pair $(G,n)$ of
a finite group $G$ and a nonnegative integer $n$,
does there exist a one-fixed-point 
or an odd-fixed-point $G$-action 
on the $n$-sphere $S^n$?
\end{quest}

\noindent
The motivation for this study was a remark
by D.~Montgomery--H.~Samelson \cite[7.~Remarks]{MonSam} that
there would not exist a one-fixed-point $G$-action on a sphere.
This remark led to the study of the existence of 
one-fixed-point $G$-actions on spheres.
The first breakthrough was due to 
E.~Floyd--R.~Richardson \cite{FloRic},
who obtained fixed-point-free $A_5$-actions 
on high-dimensional disks.
Every fixed-point-free $G$-action on a disk yields 
a topological one-fixed-point $G$-action on a sphere 
by identifying the boundary of the disk,
and the existence of a smooth one was thus expected.
Next, R.~Oliver \cite{Oli} completely determined the finite groups $G$ 
that have a fixed-point-free $G$ on some disk.
Every one-fixed-point $G$-action on a sphere 
gives a fixed-point-free $G$-action on a disk 
by removing an open $G$-slice neighborhood 
of the unique $G$-fixed point.
Thus, Oliver's result provided a necessary condition for 
finite groups to have a one-fixed-point $G$-action on a sphere.
This condition was later proved to be  
a necessary and sufficient condition, see \cite{LaiMor}.
We give the definition of the finite groups 
determined by Oliver, referred to as {\it Oliver groups}.
Let $p$ and $q$ be primes or $1$ (possibly $p=q$).
Let $\hen{G}_{p}^{q}$ denote the family 
of finite groups $G$ having a normal sequence 
$P \nsl H \nsl G$ such that 
$P$ is a $p$-group, $H/P$ is cyclic,
and $G/H$ is a $q$-group,
where the case $p=1$ (resp. $q=1$) 
indicates $|P|=1$ (resp. $|G/H|=1$),
and let $\hen{G}$ denote the union of 
the families $\hen{G}_{p}^{q}$,
where $p$ and $q$ run over all primes.
The Oliver groups are defined as 
the finite groups not belonging to $\hen{G}$.
For example, nonsolvable finite groups are Oliver groups.
The first example of a one-fixed-point $G$-action on a sphere
was discovered by E.~Stein \cite{Ste}, namely,
he constructed one-fixed-point actions on $S^7$
of the binary icosahedral group $SL(2,5)$.
Subsequently,
T.~Petrie \cite{Pet1, Pet2} proved that 
for each positive integer $m$,
there are $m$-fixed-point actions 
on certain high-dimensional homotopy spheres 
of abelian Oliver groups of odd order
and specific nonsolvable finite groups.

For a nonnegative integer $k$,
a $G$-action on a manifold $M$ is called {\it $k$-pseudofree}
if for each nontrivial subgroup $H$ of $G$,
the dimension of each connected component of $M^{H}$ 
is less than or equal to $k$.
In particular, a $k$-pseudofree $G$-action on $M$ 
is said to be {\it properly}
if there exists a subgroup $H$ of $G$ such that
some connected component of $M^{H}$ has dimension $k$.
E.~Laitinen--P.~Traczyk \cite{LaiTra} showed that 
if there exists a $2$-pseudofree one-fixed-point $G$-action on 
a homotopy sphere $S$ of dimension $\geq 5$,
then the $G$-action on $S$ must be 
a one-fixed-point $A_5$-action on $S^6$. 
Soon after, M.~Morimoto \cite{Mor3,Mor4} proved that
there are actually one-fixed-point $A_5$-actions 
on $S^6$ and $S^n$, $n \geq 9$,
and A.~Bak--M.~Morimoto \cite{BakMor1, BakMor2} later obtained
one-fixed-point $A_5$-actions on $S^7$ and $S^8$.
On the other hand,
according to the results of M.~Furuta \cite{Fur},
S.~Demichelis \cite{Dem},
and N.~P.~Buchdahl--S.~Kwasik--R.~Schultz \cite{BucKwaSch},
there are no one-fixed-point actions of finite groups 
on homotopy spheres of dimensions $\leq 5$.
Thus, the $6$-sphere $S^6$ is the least dimensional sphere 
admitting a one-fixed-point $G$-action.
However, 
it had been not known whether 
finite groups $G$ other than $A_5$ 
have an effective one-fixed-point $G$-action on $S^6$.
A.~Borowicka--P.~Mizerka \cite{BorMiz} provided
some examples of Oliver groups $G$ 
not having effective one-fixed-point $G$-actions 
on homology $6$-spheres.
Recently, M.~Morimoto \cite{MorNew2} proved that 
there exists a $3$-pseudofree one-fixed-point $G$-action on 
a homotopy sphere $S$ of dimension $k = 2l \geq 6$
if and only if 
$G$ is isomorphic to $A_5$, $S_5$, or $A_5 \times C_2$ and
$S$ is diffeomorphic to $S^6$.
Based on this result, he proposed a conjecture that
a finite group $G$ which can act effectively on $S^6$ with exactly one fixed point 
would be isomorphic to either $A_5$, $S_5$, or $A_5 \times C_2$.

The following two theorems 
give an affirmative answer to his conjecture
and also answer Question \ref{quest:1} in the case $n=6$.

\begin{thm}\label{thm:main}
If a homology $6$-sphere $\varSigma$ has an orientation-preserving 
effective odd-Euler-characteristic $G$-action,
then $G$ is isomorphic to $A_5$ and $\varSigma^{G}$ consists of exactly one point.
\end{thm}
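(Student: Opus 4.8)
The plan is to combine Smith theory and the Lefschetz fixed point theorem with the mod-$p$ Euler-characteristic congruences to control the isotropy structure, reduce the problem to the rigid one-fixed-point situation classified by M.~Morimoto \cite{MorNew2}, and then use orientation-preservation to discard every candidate group but $A_5$. Concretely, I would first record the elementary constraints. Since $\varSigma$ is a homology $6$-sphere and the action preserves orientation, each $g\in G$ has degree $+1$, so its Lefschetz number is $L(g)=1+1=2$; as $g$ has finite order, $\chi(\varSigma^{\langle g\rangle})=L(g)=2$, and in particular $\varSigma^{\langle g\rangle}\neq\varnothing$ for every $g$. Together with Smith theory this shows that for each prime $p$ and each $p$-subgroup $P\le G$ the set $\varSigma^{P}$ is a $\bZ_{p}$-homology sphere, of \emph{even} dimension when $p$ is odd. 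Since $\varSigma^{G}\subseteq\varSigma^{P}$ and the empty set has Euler characteristic $0$, the hypothesis that $\chi(\varSigma^{G})$ is odd forces $\varSigma^{G}\neq\varnothing$. Finally, the slice representation at a point $x\in\varSigma^{G}$ is a \emph{faithful} real $G$-representation $\nu_{x}$ of dimension $6-\dim_{x}\varSigma^{G}$ with $\nu_{x}^{G}=0$ (faithfulness follows from effectiveness and connectedness of $\varSigma$), which, via the evenness of the odd-$p$ fixed dimensions and the minimal dimension of a fixed-point-free faithful real $G$-representation, constrains $\dim\varSigma^{G}$.

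The heart of the argument is to upgrade these constraints to an identification of $G$. I would use the congruence $\chi(\varSigma^{L})\equiv\chi(\varSigma^{K})\pmod p$, valid whenever $K\trianglelefteq L$ with $L/K$ a $p$-group, together with the fact that $\chi(\varSigma^{P})\in\{0,2\}$ for every $2$-subgroup $P$. Propagating the even value of $\chi(\varSigma^{P})$ through the cyclic and $q$-group layers of a normal series $P\trianglelefteq H\trianglelefteq G$ shows that oddness of $\chi(\varSigma^{G})$ is incompatible with $G\in\hen{G}$; hence $G$ is an Oliver group. The dimension bounds from the slice representations, the evenness of $\dim\varSigma^{P}$ for odd $p$, and the classification of the isotropy subgroups that can occur in dimension $6$ then collapse the configuration to a $3$-pseudofree one-fixed-point action, to which Morimoto's theorem \cite{MorNew2} (or Laitinen--Traczyk \cite{LaiTra} in the $2$-pseudofree sub-case) applies, giving $G\cong A_{5}$, $S_{5}$, or $A_{5}\times C_{2}$ and $\varSigma^{G}$ equal to a single point $x$.

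It remains to eliminate the two larger groups using orientation-preservation, and here the one-fixed-point condition does most of the work. The tangential representation $T_{x}\varSigma$ restricted to $A_{5}$ must have no trivial summand (otherwise $\varSigma^{A_{5}}$ would be positive-dimensional), which forces it to be the sum of the two $3$-dimensional real $A_{5}$-representations. An odd permutation of $S_{5}$ interchanges these two summands and so acts on $T_{x}\varSigma$ with determinant $-1$, reversing orientation; for $A_{5}\times C_{2}$ the central involution either reverses orientation in the same way or else acts as $-\mathrm{id}$, in which case $\chi(\varSigma^{C_{2}})=2$ produces a second global fixed point and contradicts $|\varSigma^{G}|=1$. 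In both cases the orientation homomorphism $g\mapsto\det(g\mid T_{x}\varSigma)\in\{\pm1\}$ cannot be trivial, so neither group acts orientation-preservingly; since $A_{5}$ is perfect, its orientation homomorphism is automatically trivial. Therefore $G\cong A_{5}$ and $\varSigma^{G}$ consists of the single point $x$.

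I expect the middle step to be the main obstacle. Passing from the bare assumption of an odd Euler characteristic to the rigid, low-isotropy, one-fixed-point picture required by Morimoto's theorem is delicate precisely because the action is \emph{not} assumed pseudofree: proper subgroups may a priori have positive-dimensional fixed components (for example $\bR P^{2}$- or $4$-manifold pieces, which can contribute an odd Euler characteristic), and excluding these, ruling out the solvable Oliver candidates, and showing $\dim\varSigma^{G}=0$ all demand a careful traversal of the entire subgroup lattice by Smith theory and the Euler-characteristic congruences rather than any single clean computation.
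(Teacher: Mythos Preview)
Your opening constraints and your final elimination of $S_5$ and $A_5\times C_2$ are sound and close to what the paper does. The genuine gap is exactly where you locate it: the ``middle step.'' You propose to collapse the action to a $3$-pseudofree one-fixed-point action and then invoke Morimoto's classification \cite{MorNew2}, but none of the three hypotheses of that theorem are available. The manifold is only a \emph{homology} sphere, not a homotopy sphere; you have not shown that $\varSigma^{G}$ is a single point rather than, say, a surface with $\chi=1$; and nothing you have written forces the action to be $3$-pseudofree (proper subgroups can, and in the relevant representations do, have $4$-dimensional fixed sets). Your appeal to ``the classification of the isotropy subgroups that can occur in dimension $6$'' is not an argument---that classification, together with the analysis of how those isotropy data can sit inside a homology $6$-sphere, \emph{is} the theorem. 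In the paper the $2$- or $3$-pseudofreeness emerges only at the very end (Remark~\ref{rem:main}, Lemma~\ref{lem:A5}, Proposition~\ref{prop:main}) as a \emph{consequence}, not as an input.

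The paper's route is structurally different from yours and does not pass through Morimoto's theorem at all. It splits on the minimal-normal-subgroup structure of $G$: either $G$ has a nonabelian minimal normal subgroup, or the Fitting subgroup $F(G)$ is nontrivial. In the first case one classifies the nonabelian characteristically simple subgroups of $SO(6)$ (Proposition~\ref{prop:SO6}: $A_5$, $A_6$, $A_7$, $PSL(2,7)$, $PSU(4,2)$, $A_5\times A_5$) and shows, group by group, that only $A_5$ survives with an odd $\chi$, and then with exactly one fixed point (Propositions~\ref{prop:hom6A5}--\ref{prop:A7PSL27PSU42}, Lemma~\ref{lem:A5}). In the second case one decomposes the tangential representation along $F(G)$ (Propositions~\ref{prop:SOV}, \ref{prop:FG}) to embed $G$ in products like $O(2)^3$ or $SO(3)\times SO(3)$, and then uses Propositions~\ref{prop:DDD} and \ref{prop:SO3SO3} together with Lemma~\ref{lem:euler} to force $\chi(\varSigma^{G})$ even. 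Your Euler-characteristic congruences are one ingredient of this, but they do not by themselves yield the Oliver conclusion for all $\hen{G}_p^q$ with $q$ odd, nor do they control positive-dimensional fixed sets; the decomposition via $F(G)$ and the explicit simple-group casework are the missing ideas.
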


Let $\varSigma$ be a homology $6$-sphere and 
suppose that $\varSigma$ has
an orientation-reversing effective odd-Euler-characteristic $G$-action.
Let $H$ be the subgroup of $G$ consisting of the elements $g \in G$ 
such that the induced diffeomorphisms of $\varSigma$ are orientation-preserving.
As $[G:H] = 2$,
it holds that 
$\chi(\varSigma^{G}) \equiv \chi(\varSigma^{H}) 
\equiv 1\;{\rm mod}\;2$ (by Lemma \ref{lem:modp}).
By Theorem \ref{thm:main},
we have $H \cong A_5$ and $\varSigma^{H}=\{x\}$,
and hence $G$ is isomorphic to $S_5$ or $A_5 \times C_2$ 
and $\varSigma^{G} = \varSigma^{H} = \{x\}$.
Thus we obtain the following:

\begin{thm}\label{thm:main2}
If a homology $6$-sphere $\varSigma$ has 
an orientation-reversing effective 
odd-Euler-characteristic $G$-action,
then $G$ is isomorphic to $S_5$ or $A_5 \times C_2$
and $\varSigma^{G}$ consists of exactly one point.
\end{thm}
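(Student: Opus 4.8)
The plan is to reduce the orientation-reversing case to the orientation-preserving classification already established in Theorem \ref{thm:main}. The orientation behaviour of the diffeomorphisms of $\varSigma$ induced by elements of $G$ defines a homomorphism $\omega \colon G \to \{\pm 1\}$, where $\omega(g) = +1$ exactly when $g$ acts preserving orientation. Because the $G$-action is orientation-reversing, $\omega$ is surjective, so its kernel $H = \ker\omega$ is a normal subgroup of index $2$; moreover the restricted $H$-action on $\varSigma$ is orientation-preserving, and it is effective because the kernel of the $H$-action is contained in the (trivial) kernel of the $G$-action. This is precisely the subgroup $H$ singled out in the paragraph preceding the statement.

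Next I would transfer the parity hypothesis from $G$ down to $H$. Since $[G:H] = 2$ is prime, Lemma \ref{lem:modp} (the mod-$p$ Euler-characteristic congruence, applied with $p = 2$) gives $\chi(\varSigma^{G}) \equiv \chi(\varSigma^{H}) \pmod 2$. The hypothesis that $\chi(\varSigma^{G})$ is odd therefore forces $\chi(\varSigma^{H})$ to be odd as well, so the $H$-action on $\varSigma$ is an orientation-preserving effective odd-Euler-characteristic action. Theorem \ref{thm:main} now applies to $H$ and yields $H \cong A_5$ together with $\varSigma^{H} = \{x\}$ for a single point $x$.

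It remains to identify $G$ and to locate $\varSigma^{G}$. On the group side, $H \cong A_5$ sits inside $G$ as a normal subgroup of index $2$, so $G$ is an extension $1 \to A_5 \to G \to C_2 \to 1$. Because $Z(A_5) = 1$, such an extension is determined up to isomorphism by the induced outer action $C_2 \to \mathrm{Out}(A_5)$, and it necessarily splits as the obstruction lives in $H^2(C_2; Z(A_5)) = 0$. Using $\mathrm{Out}(A_5) \cong C_2$, the trivial outer action gives $G \cong A_5 \times C_2$ and the nontrivial one gives $G \cong A_5 \rtimes C_2 \cong S_5$; these are the only two possibilities. Finally, since $H$ is normal in $G$, the group $G$ permutes the one-point set $\varSigma^{H} = \{x\}$ and hence fixes $x$, so $x \in \varSigma^{G}$. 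As $\varSigma^{G} \subseteq \varSigma^{H} = \{x\}$, we conclude $\varSigma^{G} = \{x\}$, a single point.

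The entire difficulty of this theorem is absorbed into Theorem \ref{thm:main}: once the orientation-preserving classification is in hand, the present statement is a short descent through an index-$2$ subgroup combined with the elementary extension theory of $A_5$. Accordingly I do not anticipate a genuine obstacle here; the only points requiring care are verifying that $\omega$ is surjective (so that $[G:H]$ equals $2$ exactly and the parity congruence of Lemma \ref{lem:modp} is available) and invoking $Z(A_5) = 1$ together with $\mathrm{Out}(A_5) \cong C_2$ to exclude any extension other than $S_5$ and $A_5 \times C_2$.
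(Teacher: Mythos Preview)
Your proposal is correct and follows essentially the same route as the paper: pass to the index-$2$ orientation-preserving kernel $H$, use Lemma~\ref{lem:modp} (via the $G/H$-action on $\varSigma^{H}$) to transfer the odd-Euler-characteristic hypothesis to $H$, invoke Theorem~\ref{thm:main}, and then read off $G$ and $\varSigma^{G}$. The only difference is that you spell out the extension-theoretic step (using $Z(A_5)=1$ and $\mathrm{Out}(A_5)\cong C_2$) where the paper simply asserts that $G\cong S_5$ or $A_5\times C_2$.
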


\begin{rem}\label{rem:main}
In the situation of Theorem \ref{thm:main} (resp. \ref{thm:main2}),
one can also prove that 
an effective one-fixed-point action on $\varSigma$ 
of $A_5$ (resp. $S_5$ or $A_5 \times C_2$) must be 
properly $2$-pseudofree (resp. properly $3$-pseudofree).
This will be proved in Lemma \ref{lem:A5} (resp. Proposition \ref{prop:main}).
\end{rem}

\begin{com}
The author does not know whether there exists a $G$-action on a homology $6$-sphere $\varSigma$ 
such that $\chi(\varSigma^{G})$ is neither $0$, $1$, nor $2$. 
It would be interesting if such an action does exist.
\end{com}

We present some results on odd-fixed-point $G$-actions on homology spheres
that served in our proof of Lemma \ref{lem:nonabelian} stated later.
Inspired by a result of A.~Borowiecka \cite{Bor},
M.~Morimoto--the author \cite{MorTam}
introduced odd-fixed-point actions,
as a generalization of one-fixed-point actions,
and provided dimensions of homology spheres 
not admitting effective odd-fixed-point $G$-actions
for $G = S_5$ or $SL(2,5)$.
For this result for $G=S_5$,
M.~Morimoto \cite{MorNew3} constructed
effective one-fixed-point $G$-actions on $S^n$
for the dimensions $n$ other than those obtained in \cite{MorTam}.
The author \cite{Tam}, P.~Mizerka \cite{Miz},
and M.~Morimoto \cite{MorNew1}
also gave dimensions of homology spheres 
not admitting effective one-fixed-point 
or odd-fixed-point $G$-actions
for several nonsolvable finite groups.
However,
it is not yet known whether the dimensions of homology spheres
given in \cite{MorTam} for $G= SL(2,5)$,
\cite{Tam}, \cite{Miz}, and \cite{MorNew1} are the best ones.

Let a finite group $G$ act on a manifold $M$ with a $G$-fixed point $x$ of $M$.
The tangent space $T_{x}(M)$ inherits the $G$-action on $M$ linearly
(in other words, $T_{x}(M)$ can have a structure of an $\bR[G]$-module),
and $T_{x}(M)$ is called 
the {\it tangential $G$-module} at $x \in M^{G}$.
Every tangent space $T_{x}(M)$ appearing in this paper 
should be understood as the tangential $G$-module 
at a $G$-fixed point $x$ of $M$.
Our work is principally group-theoretic and 
the method of our proof of Theorem \ref{thm:main} is based on 
the proof of \cite[Theorem {\rm I\hskip-0.15cm}I.4]{BucKwaSch}.
Their proof proceeded by studying the tangential $G$-module 
at a $G$-fixed point
and minimal normal subgroups of the acting group $G$ on a manifold.
In our proof of Theorem \ref{thm:main},
we also study the unique maximal nilpotent normal subgroup $F(G)$
of the acting group $G$ on a homology $6$-sphere,
where $F(G)$ is often called the {\it Fitting subgroup} of $G$.
It is known that every minimal normal subgroup of a finite group is 
a characteristically simple group, i.e.
the Cartesian product of isomorphic finite simple groups 
(see \cite[Theorems 1.4 and 1.5, p.16--17]{Gor}).
Thus, each minimal normal subgroup of $G$ is either 
a nonabelian characteristically simple group or 
contained in the Fitting subgroup $F(G)$.
Therefore, Theorem \ref{thm:main}
will be obtained from the following two lemmas:

\begin{lem}\label{lem:nonabelian}
Suppose that $G$ has a nonabelian minimal normal subgroup.
If a homology $6$-sphere $\varSigma$ has 
an orientation-preserving effective 
odd-Euler-characteristic $G$-action,
then $G$ is isomorphic to $A_5$ and $\varSigma^{G}$ consists of exactly one point.
\end{lem}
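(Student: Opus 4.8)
The plan is to produce a single $G$-fixed point, linearize the action there, and then let the tangential module $V:=T_{x}(\varSigma)$, viewed as a faithful $6$-dimensional real $G$-module, interact with Smith theory on the $\bZ_{p}$-homology subspheres $\varSigma^{P}$. Since $\chi(\varSigma^{G})$ is odd it is nonzero, so $\varSigma^{G}\neq\emptyset$; fix $x\in\varSigma^{G}$. Effectiveness of $G$ together with the Bochner linearization at $x$ shows that $V$ is a \emph{faithful} real $G$-module of dimension $6$. Writing the given nonabelian minimal normal subgroup as $N\cong T^{k}$ with $T$ a nonabelian finite simple group (using the cited fact that minimal normal subgroups are characteristically simple), the restriction $V|_{N}$ is a faithful real representation of $T^{k}$. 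Because every nonabelian simple group has minimal faithful real dimension at least $3$ and each of the $k$ factors must act nontrivially, I get $3k\le 6$, i.e.\ $k\le 2$, and $T$ is confined to the finite list of nonabelian simple groups carrying a nontrivial real representation of dimension $\le 6$ — by inspection of the character tables these are $A_{5}$, $A_{6}$, $A_{7}$, $\mathrm{PSL}(2,7)$ and a small handful of $\mathrm{PSL}(2,q)$. In the case $k=2$ one is forced to $T=A_{5}$ with $V|_{N}=W_{1}\oplus W_{2}$, the two inflated $3$-dimensional modules.

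The heart of the argument is eliminating every possibility except $(T,k)=(A_{5},1)$. For each $p$-subgroup $P\le N$ acting nontrivially, $\varSigma^{P}$ is a $\bZ_{p}$-homology sphere whose dimension equals $\dim_{\bR}V^{P}$, a number I read off from $V|_{N}$; as $P$ ranges over Sylow subgroups and their subgroups, these dimensions, together with the even-codimension rule of Smith theory, must be mutually consistent on a single homology $6$-sphere. I would combine this with two parity inputs: that a closed odd-dimensional manifold has Euler characteristic $0$, so the oddness of $\chi(\varSigma^{G})$ forbids $\varSigma^{G}$ from being concentrated in odd dimensions; and Lemma \ref{lem:modp}, giving $\chi(\varSigma^{G})\equiv\chi(\varSigma^{H})\bmod p$ whenever $G/H$ is a $p$-group. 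For the candidates $A_{6},A_{7},\mathrm{PSL}(2,7)$ and $\mathrm{PSL}(2,q)$ the tangential module is the relevant $5$- or $6$-dimensional irreducible, and elements of large prime order (e.g.\ order $7$ in $A_{7}$ or $\mathrm{PSL}(2,7)$) fix too little, forcing $0$-dimensional $\bZ_{p}$-homology subspheres whose normalizer actions clash with the consistency and parity requirements; the case $N=A_{5}\times A_{5}$ is excluded similarly by playing the fixed data of the two factors against each other. Since $T$ is perfect, every element of $N$ has determinant $+1$ on $V$, so this stage is unobstructed by orientation.

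Once $N\cong A_{5}$, I would determine $V|_{A_{5}}$: the $p=2,3,5$ Smith constraints single out $V|_{A_{5}}=U\oplus U'$, the sum of the two distinct $3$-dimensional irreducible real $A_{5}$-modules (equivalently, the action is properly $2$-pseudofree, cf.\ Lemma \ref{lem:A5}). In particular, for a Klein four subgroup $K\le A_{5}$ one has $V^{K}=0$, so $\varSigma^{K}$ is a $\bZ_{2}$-homology $0$-sphere, i.e.\ exactly two points, and $\varSigma^{G}\subseteq\varSigma^{A_{5}}\subseteq\varSigma^{K}$ has at most two points; the oddness of $\chi(\varSigma^{G})$ then forces $\varSigma^{G}$ to be a single point. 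To upgrade $N\nsl G$ to $G=A_{5}$, I would use $G/\bigl(N\cdot C_{G}(N)\bigr)\hookrightarrow\mathrm{Out}(A_{5})\cong C_{2}$ together with the fact that $C_{G}(N)$ acts through $\mathrm{End}_{A_{5}}(V)$; since $U\not\cong U'$ are absolutely irreducible of real type, $C_{G}(N)$ embeds in $\{\pm1\}\times\{\pm1\}$. An outer element swaps $U$ and $U'$, hence has determinant $-1$ on $V$ (orientation-reversing, excluded); a nontrivial element of $C_{G}(N)$ is either orientation-reversing or acts as $-I$, and in the latter case produces a second $G$-fixed point, making $\chi(\varSigma^{G})=2$ and contradicting oddness. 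Hence $C_{G}(N)=1$, no outer part survives, and $G=A_{5}$.

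The main obstacle is the elimination carried out in the second paragraph: ruling out $A_{5}\times A_{5}$ and especially the larger simple candidates $A_{6},A_{7},\mathrm{PSL}(2,7)$ requires tracking the dimensions $\dim_{\bR}V^{P}$ of many $\bZ_{p}$-homology subspheres simultaneously and pitting them against the Euler-characteristic parity, with no single clean inequality doing all the work. A secondary difficulty is controlling $C_{G}(A_{5})$ sharply enough to conclude $G=A_{5}$ rather than merely $G/C_{G}(A_{5})\le S_{5}$.
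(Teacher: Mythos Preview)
Your overall architecture matches the paper's, but there is a genuine gap at the step ``the $p=2,3,5$ Smith constraints single out $V|_{A_{5}}=U\oplus U'$'' with $U,U'$ the two distinct $3$-dimensional irreducibles. Smith theory does no such thing. Faithfulness of $V|_{A_{5}}$ only says $A_{5}$ acts nontrivially; it does not force $V^{A_{5}}=0$. The possibilities $\bR_{A_{5}}^{3}\oplus U_{3.i}$, $\bR_{A_{5}}^{2}\oplus U_{4}$, $\bR_{A_{5}}\oplus U_{5}$, and $U_{3.i}\oplus U_{3.j}$ (including $i=j$) are all consistent with every $\bZ_{p}$-homology subsphere having the right dimension; the paper lists them in Proposition~\ref{prop:hom6A5} and eliminates them one by one in Lemma~\ref{lem:A5}. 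The delicate case you have no mechanism for is $V|_{A_{5}}\cong\bR_{A_{5}}^{3}\oplus U_{3.i}$: then $\varSigma^{A_{5}}=\varSigma^{D_{4}}$ is a $\bZ_{2}$-homology $3$-sphere carrying an effective $G/A_{5}$-action, and you must exclude an odd-Euler-characteristic action there. The paper needs a separate result on $\bZ_{2}$-homology $3$-spheres (Proposition~\ref{prop:Z2hom3A5}) to force $G/A_{5}\cong A_{5}$, hence $G\cong A_{5}\times A_{5}$, and then a $\bZ_{2}$-intersection-number argument (Proposition~\ref{prop:A5A5}) to get $|\varSigma^{G}|=2$. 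None of this is visible from ``playing the fixed data of the two factors against each other''.

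This gap also undermines your endgame. Your embedding $C_{G}(A_{5})\hookrightarrow\{\pm1\}\times\{\pm1\}$ presupposes that $V|_{A_{5}}$ splits as two \emph{non-isomorphic} absolutely irreducible summands; with $i=j$ one has $\mathrm{End}_{A_{5}}(V)\cong M_{2}(\bR)$ and the centralizer can be any finite subgroup of $O(2)$, so the orientation/parity dichotomy you invoke no longer suffices. A secondary point: your list of simple candidates is incorrect --- $PSU(4,2)$ has a faithful $6$-dimensional real irreducible and is not of the form $PSL(2,q)$; it appears in the paper's classification (Proposition~\ref{prop:SO6}) and its exclusion (Proposition~\ref{prop:A7PSL27PSU42}, via Lemma~\ref{lem:S0}) again uses an intersection-number argument inside a $\bZ_{2}$-homology subsphere rather than a pure dimension/parity count.
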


\begin{lem}\label{lem:abelian}
If a homology $6$-sphere $\varSigma$ has 
an orientation-preserving effective 
odd-Euler-characteristic $G$-action and
the Fitting subgroup $F(G)$ is nontrivial,
then $\chi(\varSi^{G}) \equiv 0\;{\rm mod}\;2$. 
\end{lem}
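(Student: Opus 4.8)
The plan is to reduce to a minimal normal elementary abelian subgroup and then control the parity of $\chi(\varSigma^{G})$ through Smith theory. First, if $\varSigma^{G}=\emptyset$ there is nothing to prove, so assume $\varSigma^{G}\neq\emptyset$. Since $F(G)$ is a nontrivial nilpotent normal subgroup, some $p$-component $O_{p}(G)$ is nontrivial; choose a minimal normal subgroup $A$ of $G$ contained in $O_{p}(G)$. As $A$ is a characteristically simple $p$-group, $A\cong(C_{p})^{k}$ is elementary abelian. Iterating Smith theory along a chain of subgroups of $A$, the fixed set $Y:=\varSigma^{A}$ is a $\bZ_{p}$-homology sphere, and since $A\nsl G$ the quotient $G/A$ acts on $Y$ with $\varSigma^{G}=Y^{G/A}$. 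Because $\varSigma$ is connected and the action is effective, $A$ cannot fix all of $\varSigma$, so $\dim Y\leq 5$; in particular the dimension strictly drops.

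Next I would isolate where odd Euler characteristics can come from. The fixed set $\varSigma^{G}$ is a disjoint union of closed submanifolds of $\varSigma$, and a closed manifold has even Euler characteristic unless its dimension is divisible by $4$. Since these components have dimension at most $6$, the only ones that can contribute oddly are the isolated points (dimension $0$) and the closed $4$-manifold components. Writing $n_{0}$ for the number of isolated $G$-fixed points, we therefore have
\[
\chi(\varSigma^{G})\;\equiv\;n_{0}+\sum_{\dim F=4}\chi(F)\pmod 2,
\]
the sum running over the $4$-dimensional components $F$ of $\varSigma^{G}$, and it suffices to prove that this combination is even.

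For $p$ odd the reduction is favourable: every nontrivial real irreducible representation of the abelian group $A$ is $2$-dimensional, so $\dim(T_{x}\varSigma)^{A}$ is even at each $x\in\varSigma^{A}$ and hence $Y$ is an even-dimensional $\bZ_{p}$-homology sphere, of dimension $0$, $2$, or $4$. When $\dim Y=0$, $Y$ consists of two points permuted by $G/A$, so $\varSigma^{G}=Y^{G/A}$ has $0$ or $2$ points and $\chi$ is even; the cases $\dim Y=2,4$ I would treat by the same mechanism applied to the $(G/A)$-action on the lower-dimensional sphere $Y$, using that the normal $A$-module at each fixed point is a sum of $2$-dimensional pieces that $G$ permutes, forcing isolated $(G/A)$-fixed points and odd-$\chi$ $4$-components to occur in pairs. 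The mod-$p$ congruences of Lemma \ref{lem:modp}, together with $\chi(Y)\in\{0,2\}$, keep this bookkeeping consistent.

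The main obstacle is the case $p=2$ and, within it, the $4$-dimensional fixed components. For $p=2$ Smith theory no longer forces $\dim Y$ to have a prescribed parity, so the clean dimension count above is unavailable, and a central involution reduces the dimension by an uncontrolled amount. Here I would analyse the tangential $A$-module $T_{x}\varSigma$ directly: at an isolated $G$-fixed point it is a $6$-dimensional real $A$-module with $(T_{x}\varSigma)^{G}=0$, and at a point of a $4$-dimensional component the normal module is $2$-dimensional with no trivial summand. The aim is to show, using the $G$-action permuting the nontrivial characters of $A$ together with the mod-$2$ Euler characteristic congruences of Lemma \ref{lem:modp}, that these odd contributions cancel in pairs. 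Making this pairing precise --- equivalently, setting up an induction on $\dim\varSigma$ (or on $|G|$) in which the hypothesis $O_{p}(G)\neq 1$ survives passage to $G/A$ and to $Y$ --- is the delicate point on which the proof turns, and I expect it to require the detailed representation-theoretic analysis of tangential modules in the spirit of \cite{BucKwaSch}.
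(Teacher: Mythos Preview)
Your proposal is a programme, not a proof: you explicitly leave the $p=2$ case open (``the delicate point on which the proof turns'') and, even for $p$ odd, defer the cases $\dim Y\in\{2,4\}$ to an unspecified ``same mechanism''. The recursion you envisage is not well-posed. After passing from $G$ to $G/A$ acting on $Y=\varSigma^{A}$, the Fitting subgroup of $G/A$ may well be trivial (e.g.\ $G=SL(2,5)$, $A=Z(G)$, $G/A\cong A_{5}$), so the hypothesis $F(\,\cdot\,)\neq E$ does not descend; and $Y$ is only a $\bZ_{p}$-homology sphere, so Smith theory for a $q$-subgroup of $G/A$ with $q\neq p$ is unavailable. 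Thus already for $p$ odd and $\dim Y=4$ you face an arbitrary finite group acting on a $\bZ_{p}$-homology $4$-sphere with no usable inductive leverage. The asserted ``pairing'' of odd contributions via the $G$-permutation of characters of $A$ is not substantiated by any fixed-point or character formula. A minor additional issue: the claim that a closed manifold of dimension $\not\equiv 0\pmod 4$ has even Euler characteristic fails for nonorientable surfaces, and you have not checked orientability of the $2$-dimensional components of $\varSigma^{G}$.

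The paper's route is quite different and avoids any recursion on dimension. It first uses Lemma~\ref{lem:nonabelian} (this is where the odd-Euler-characteristic hypothesis is spent) to assume $G$ has no nonabelian minimal normal subgroup. Working with the whole of $F(G)$ rather than a single minimal $A$, it splits the tangential representation at a fixed point as $T_{x}\varSigma=T_{x}\varSigma^{F(G)}\oplus T_{x}\varSigma_{F(G)}$; Propositions~\ref{prop:SOV} and~\ref{prop:FG} show the kernel $K$ of $\rho_{F(G)}$ has $F(K)=E$, hence $K=E$ under the standing assumption, so $G$ itself embeds in $O(n)$ with $n=\dim T_{x}\varSigma_{F(G)}$. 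What remains is a finite case analysis: for $F(G)$ noncyclic one finishes via Lemmas~\ref{lem:lowsphere}, \ref{lem:onedim} and Proposition~\ref{prop:Z2hom3A5}; for $F(G)$ cyclic one splits on $\dim T_{x}\varSigma^{F(G)}\in\{0,2,4\}$ and, using the structure of finite subgroups of $O(2)$, $SO(3)\times SO(3)$, and $O(2)^{3}$ (Propositions~\ref{prop:DDD}, \ref{prop:SO3SO3}), shows $G\in\hen{G}^{2}_{1}$ so that Lemma~\ref{lem:euler} gives $\chi(\varSigma^{G})\equiv 0\pmod 2$. The idea you are missing is precisely this embedding of $G$ into a product of small orthogonal groups determined by $F(G)$, which converts the problem into pure finite group theory rather than an open-ended Smith-theoretic induction.
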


We remark that an assertion similar to Lemma \ref{lem:abelian} 
not necessarily hold 
for homology spheres of dimensions $7$ or higher,
because there exist orientation-preserving 
effective one-fixed-point actions of $SL(2,5) \times C_{m}$ on $S^n$, 
where $n=3+4k$ with $k \in \bN$ and $C_{m}$ is a cyclic group of order $m$ with $(m,30)=1$,
see \cite[Theorem 1.3]{MorNew2}.
 
Lemmas \ref{lem:nonabelian} and \ref{lem:abelian} 
will be proved throughout Sections \ref{nonabelian} and \ref{abelian}, respectively.


\section{Preliminaries}\label{basic}

In this section we provide some results which will be used 
in Sections \ref{nonabelian} and \ref{abelian}.

The following three lemmas are well-known results in our study.

\begin{lem}[{\rm \cite[Chapter I\hspace{-.1em}I\hspace{-.1em}I, Theorem\;4.3]{Bre})}]
\label{lem:modp}
Let $p$ be a prime, $P$ a $p$-group, and $M$ a compact manifold with $P$-action.
Then
\[
\chi(M) \equiv \chi(M^{P}) \;{\rm mod}\;p.
\]
\end{lem}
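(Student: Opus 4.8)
The plan is to compute both Euler characteristics from a single equivariant cell structure on $M$ and to exploit the fact that, because $P$ is a $p$-group, every orbit of non-fixed cells has cardinality divisible by $p$. First I would invoke the existence of an equivariant triangulation of the smooth $P$-manifold $M$, subdivided finely enough that $P$ acts simplicially and that any simplex fixed setwise by a subgroup of $P$ is fixed pointwise. Under this regularity condition, for every subgroup $Q \leq P$ the fixed set $M^{Q}$ is a subcomplex; in particular $M^{P}$ is triangulated by exactly those simplices fixed by all of $P$. That $M^{P}$ is genuinely a compact submanifold, so that $\chi(M^{P})$ is the Euler characteristic of a manifold, follows from the differentiable slice theorem (it may be disconnected with components of varying dimension, but this does not affect the argument).

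Next I would carry out the orbit count. Writing $\chi(M) = \sum_{\sigma} (-1)^{\dim \sigma}$ over all simplices $\sigma$, I group the simplices into $P$-orbits. Since $P$ acts simplicially it preserves dimension, so all simplices in one orbit share a common dimension and the orbit of $\sigma$ contributes $(-1)^{\dim \sigma}\,|P\sigma|$ to the sum. The fixed simplices are singleton orbits and together contribute exactly $\chi(M^{P})$. A non-fixed simplex $\sigma$ has stabilizer a proper subgroup $P_{\sigma} < P$, so its orbit has size $[P:P_{\sigma}]$, which is a positive power of $p$ because $P$ is a $p$-group; hence each such orbit contributes a multiple of $p$. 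Summing over orbits yields
\[
\chi(M) - \chi(M^{P}) \equiv 0 \pmod{p},
\]
which is the assertion.

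The only real subtlety is the regularity of the triangulation: I must secure that setwise-fixed simplices may be taken pointwise-fixed, so that $M^{P}$ is a subcomplex and the non-fixed orbits genuinely have $p$-power size. This is precisely what an equivariant triangulation provides after a suitable barycentric subdivision, so I would cite the standard treatment (e.g. \cite{Bre}) rather than reprove it. An alternative route that minimizes the orbit analysis is to reduce to the cyclic case by induction on $|P|$: choosing a normal subgroup $N \trianglelefteq P$ of index $p$, one applies the result for the quotient $C_{p} = P/N$ acting on the compact manifold $M^{N}$ — noting $(M^{N})^{P/N} = M^{P}$ — together with the inductive hypothesis for $N$ acting on $M$. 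For the base case $P = C_{p}$ the complement $M \setminus M^{C_{p}}$ carries a free $C_{p}$-action, so the cells off $M^{C_{p}}$ fall into free orbits of size exactly $p$, whence $\chi(M) - \chi(M^{C_{p}}) = \chi(M, M^{C_{p}}) \equiv 0 \pmod{p}$; this is the content of the classical Smith-theoretic computation.
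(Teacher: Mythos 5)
Your argument is correct. Note, however, that the paper offers no proof of this lemma at all: it is quoted verbatim from Bredon's book (Chapter III, Theorem 4.3), so there is nothing in the paper to compare against line by line. Your primary route --- equivariant triangulation after barycentric subdivision so that setwise-invariant simplices are pointwise fixed, followed by grouping simplices into $P$-orbits and observing that every non-fixed orbit has size a positive power of $p$ --- is the standard elementary combinatorial proof, and you correctly isolate the one genuine subtlety (the regularity of the triangulation, available for smooth actions by equivariant triangulation theory; the paper's blanket smoothness assumption covers this). Your alternative route, reducing by induction on $|P|$ through a normal subgroup $N$ of index $p$ to the cyclic case $C_p$ and using $(M^{N})^{P/N}=M^{P}$ together with the freeness of the action off the fixed set, is closer in spirit to how Bredon actually derives the statement from Smith theory for $\mathbb{Z}_p$. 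Either route is complete; the orbit count buys elementarity, while the inductive reduction buys applicability in settings (e.g.\ more general cohomology theories) where a global equivariant cell structure is less convenient. No gaps.
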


\begin{lem}[{\rm Smith's theorem \cite{Smi},
\cite[Chapter I\hspace{-.1em}I\hspace{-.1em}I, Theorem 5.1]{Bre}}]\label{lem:smith}
Let $p$ be a prime and $P$ a $p$-group.
If $P$ acts on a $\bZ_{p}$-homology sphere $\Xi$,
then $\Xi^{P}$ is itself a $\bZ_{p}$-homology sphere.
\end{lem}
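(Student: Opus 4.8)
The plan is to reduce to a cyclic group of prime order and then apply Smith theory. Write $n$ for the dimension of the $\bZ_p$-homology sphere $\Xi$ and induct on $|P|$; the case $|P| = 1$ is trivial. A nontrivial $p$-group is nilpotent, so it possesses a normal subgroup $Q$ of index $p$. The subgroup $Q$ acts on $\Xi$, so by the inductive hypothesis $\Xi^{Q}$ is a $\bZ_p$-homology sphere. Since $Q \nsl P$, the quotient $P/Q \cong \bZ_p$ acts on $\Xi^{Q}$ and $(\Xi^{Q})^{P/Q} = \Xi^{P}$; thus it suffices to prove the statement for $P = \bZ_p$.

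For $P = \bZ_p = \langle T \rangle$ I work with $\bZ_p$ coefficients and choose an equivariant triangulation of $\Xi$ in which $\Xi^{P}$ is a subcomplex (available here since the action is smooth, so $\Xi^{P}$ is a submanifold; in general one passes to \v{C}ech theory). Then the relative chain complex $D_{*} := C_{*}(\Xi, \Xi^{P}; \bZ_p)$ consists of free $\bZ_p[P]$-modules, because $P$ permutes the simplices off $\Xi^{P}$ freely. I identify $\bZ_p[P] \cong \bZ_p[\tau]/(\tau^{p})$ with $\tau = T - 1$ and the norm element $\sigma = 1 + T + \cdots + T^{p-1} = \tau^{p-1}$, noting $\tau^{p} = 0$ and that the annihilator of $\tau^{i}$ is $(\tau^{p-i})$.

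For each $i$ with $1 \le i \le p-1$, multiplication by $\tau^{i}$ yields a short exact sequence of complexes $0 \to \tau^{p-i} D_{*} \to D_{*} \to \tau^{i} D_{*} \to 0$, and hence the Smith long exact sequences relating $H_{*}(D_{*})$ to the Smith homology groups $H_{*}(\tau^{i} D_{*})$. Combining these with the long exact sequence of the pair $(\Xi, \Xi^{P})$ and carrying out the standard bookkeeping over the resulting diagram gives the Smith inequality
\[
\textstyle\sum_{k} \dim_{\bZ_p} H_{k}(\Xi^{P}; \bZ_p) \;\le\; \sum_{k} \dim_{\bZ_p} H_{k}(\Xi; \bZ_p) = 2 ,
\]
the right-hand equality holding because $\Xi$ is a $\bZ_p$-homology $n$-sphere.

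It remains to pin down $\Xi^{P}$ from this bound. If the left-hand total dimension is $0$ then $\Xi^{P} = \varnothing$, a $\bZ_p$-homology $(-1)$-sphere. The value $1$ is impossible: it would force $\Xi^{P}$ to be nonempty and $\bZ_p$-acyclic, so $\chi(\Xi^{P}) = 1$, whereas Lemma \ref{lem:modp} gives $\chi(\Xi^{P}) \equiv \chi(\Xi) = 1 + (-1)^{n} \bmod p$ and $1 + (-1)^{n} \not\equiv 1 \bmod p$ for every prime $p$. Finally, if the total dimension is $2$ then $\Xi^{P}$ is nonempty, so $H_{0}(\Xi^{P};\bZ_p) \neq 0$; the two copies of $\bZ_p$ then lie either both in degree $0$ (giving a $\bZ_p$-homology $0$-sphere) or one in degree $0$ and one in a single positive degree (giving a $\bZ_p$-homology sphere of that dimension). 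In every case $\Xi^{P}$ is a $\bZ_p$-homology sphere. I expect the main obstacle to be the Smith inequality, namely controlling the interaction of the several Smith exact sequences with the pair sequence; the reduction step and the closing Euler-characteristic argument are routine.
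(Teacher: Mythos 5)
The paper offers no proof of this lemma; it is quoted directly as Smith's theorem from \cite{Smi} and Bredon's book, so there is nothing internal to compare against. Your sketch is precisely the classical argument from that source --- reduction to $P=\bZ_p$ via a normal subgroup of index $p$ and the identity $\Xi^{P}=(\Xi^{Q})^{P/Q}$, the Smith--Floyd inequality $\sum_{k}\dim_{\bZ_p}H_{k}(\Xi^{P};\bZ_p)\le 2$, and the elimination of total Betti number $1$ by the congruence of Lemma \ref{lem:modp} --- and it is correct in outline, the one step you defer to ``standard bookkeeping'' (deriving the Smith inequality from the Smith and pair sequences) being exactly the content of the cited theorem.
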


\begin{lem}[{\rm cf. \cite[Proposition 2.1]{Oli}}]\label{lem:euler}
Let $p$ and $q$ be primes and 
$\Xi$ a $\bZ_p$-homology sphere with $G$-action.
If $G$ belongs to $\hen{G}^{q}_{p}$ 
then $\chi(\Xi^{G}) \equiv \chi(S^{k})\;{\rm mod}\;q$ for some $k$.
In particular,
if $G$ belongs to $\hen{G}^{2}_{p}$ 
then $\chi(\Xi^{G}) \equiv 0\;{\rm mod}\;2$.
\end{lem}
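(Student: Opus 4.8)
The plan is to peel off the three layers of the normal series $P \nsl H \nsl G$ one at a time, using Lemma \ref{lem:smith} to handle the $p$-group $P$, Lemma \ref{lem:modp} to handle the $q$-group $G/H$, and a separate homological argument for the cyclic layer $H/P$. Concretely, I will first reduce everything to the statement that $\Xi^{H}$ is again a $\bZ_p$-homology sphere. Once this is known, say $\Xi^{H}$ is a $\bZ_p$-homology $k$-sphere, note that $\Xi^{H}$ is a closed submanifold of $\Xi$ (the action being smooth) on which the $q$-group $G/H$ acts with $(\Xi^{H})^{G/H} = \Xi^{G}$, so Lemma \ref{lem:modp} applied with the prime $q$ gives
\[
\chi(\Xi^{G}) = \chi\bigl((\Xi^{H})^{G/H}\bigr) \equiv \chi(\Xi^{H}) = \chi(S^{k}) \pmod{q},
\]
which is exactly the assertion. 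The ``in particular'' clause then follows at once, since $\chi(S^{k}) \in \{0,2\}$ is even, so the right-hand side is $\equiv 0 \pmod 2$ when $q = 2$.

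It therefore remains to prove that $\Xi^{H}$ is a $\bZ_p$-homology sphere. Since $P$ is a $p$-group acting on the $\bZ_p$-homology sphere $\Xi$, Lemma \ref{lem:smith} shows that $\Xi^{P}$ is a $\bZ_p$-homology sphere, and $C := H/P$ acts on it with $\Xi^{H} = (\Xi^{P})^{C}$. Writing $|C| = p^{a}m$ with $p \nmid m$ and using that $C$ is cyclic, I split $C = C_{p} \times C_{p'}$ into its Sylow $p$-subgroup and its unique complement of order $m$. The preimage of $C_{p}$ in $H$ is a $p$-group, so a second application of Lemma \ref{lem:smith} shows that $(\Xi^{P})^{C_{p}}$ is again a $\bZ_p$-homology sphere $Y$; moreover $\Xi^{H} = Y^{C_{p'}}$. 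Everything is thus reduced to the following claim: if a cyclic group $B$ of order prime to $p$ acts smoothly on a $\bZ_p$-homology sphere $Y$, then $Y^{B}$ is a $\bZ_p$-homology sphere.

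I expect this last claim to be the main obstacle, precisely because the fixed-point set of a group whose order is prime to $p$ is \emph{not} governed by the Smith theory of Lemma \ref{lem:smith}. To prove it I would induct on $|B|$: choosing the unique subgroup $D \nsl B$ of prime index $\ell$ (necessarily $\ell \neq p$), the inductive hypothesis makes $Y^{D}$ a $\bZ_p$-homology sphere on which $B/D \cong C_{\ell}$ acts, and $Y^{B} = (Y^{D})^{C_{\ell}}$, so it suffices to treat $B = C_{\ell}$ of prime order $\ell \neq p$. Here $C_{\ell}$ acts freely on $Y \setminus Y^{C_{\ell}}$, and I would use the transfer isomorphism $H_{*}(Y/C_{\ell};\bZ_p) \cong H_{*}(Y;\bZ_p)^{C_{\ell}}$, valid since $\ell$ is invertible in $\bZ_p$, to compute the $\bZ_p$-homology of the quotient, feeding this into the long exact sequence of the pair $(Y, Y^{C_{\ell}})$. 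As a numerical check I would invoke the Lefschetz--Hopf formula $\chi(Y^{C_{\ell}}) = L(g)$ for a generator $g$, whose mod-$p$ reduction is forced by the $\bZ_p$-homology of $Y$ to be $L(g) \equiv 1 + (-1)^{\dim Y}\lambda \pmod{p}$, where $\lambda \in \bZ_p^{\times}$ is the scalar by which $g$ acts on $H_{\dim Y}(Y;\bZ_p)$.

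The delicate point, which is exactly the content adapted from \cite[Proposition 2.1]{Oli}, is to upgrade these Euler-characteristic and partial-homology statements to the full conclusion that $Y^{C_{\ell}}$ has the $\bZ_p$-homology of a sphere. Smoothness of the action is what I would exploit to close the induction: $Y^{C_{\ell}}$ is a closed submanifold whose equivariant normal sphere bundle carries a free $C_{\ell}$-action, and controlling the relative homology $H_{*}(Y, Y^{C_{\ell}};\bZ_p)$ through this bundle is the step where the argument of \cite{Oli} is genuinely needed rather than the elementary Lemmas \ref{lem:modp} and \ref{lem:smith} alone.
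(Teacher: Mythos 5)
Your outer two reductions are sound and match the intended argument: Smith's theorem (Lemma \ref{lem:smith}) handles the normal $p$-subgroup $P$, and Lemma \ref{lem:modp} applied with the prime $q$ to the $q$-group $G/H$ acting on the compact manifold $\Xi^{H}$ gives $\chi(\Xi^{G}) \equiv \chi(\Xi^{H}) \;{\rm mod}\;q$. The gap is in the middle, cyclic layer. You reduce everything to the claim that if a cyclic group $B$ of order prime to $p$ acts smoothly on a $\bZ_p$-homology sphere $Y$, then $Y^{B}$ is again a $\bZ_p$-homology sphere, and you do not prove it: the transfer computes $H_{*}(Y/B;\bZ_p)$, not $H_{*}(Y^{B};\bZ_p)$, and the long exact sequence of the pair $(Y,Y^{B})$ does not close without exactly the input you are trying to produce. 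Worse, the claim is not true in general: Smith theory controls only the mod-$\ell$ homology of the fixed set of an $\ell$-group, and fixed point sets of smooth $C_{\ell}$-actions on spheres need only be $\bZ_{\ell}$-homology spheres --- nothing prevents them from carrying torsion at other primes, so $Y^{B}$ need not be a $\bZ_p$-homology sphere. Citing \cite[Proposition 2.1]{Oli} does not rescue this, since that result is an Euler-characteristic statement, not a statement about the homology of the fixed set.

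What you actually need is much weaker, and it is precisely the ``numerical check'' you relegate to the end of your third paragraph. Since $\Xi$ is a closed manifold, $Y=\Xi^{P}$ is a closed $\bZ_p$-homology $m$-sphere, hence (by universal coefficients) a rational homology $m$-sphere. For the cyclic group $C=H/P=\langle gP\rangle$ one has $\Xi^{H}=(\Xi^{P})^{C}=Y^{g}$, and for a finite-order map of a compact manifold $\chi(Y^{g})=\Lambda(g)$, the rational Lefschetz number; as $g_{*}$ acts on $H_{m}(Y;\bQ)\cong\bQ$ by $\pm 1$, this gives $\Lambda(g)=1+(-1)^{m}(\pm 1)\in\{0,2\}$, i.e. $\chi(\Xi^{H})=\chi(S^{k})$ for some $k$, with no need to split $C$ into its $p$-part and $p'$-part. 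Feeding this into your final mod-$q$ step yields the lemma. For what it is worth, the paper itself offers no argument beyond the reference to \cite[Proposition 2.2]{MorNew1}, whose proof follows this Smith/Lefschetz/mod-$q$ pattern rather than the homological strengthening you attempt.
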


\begin{proof}
See the proof of \cite[Proposition 2.2]{MorNew1}.
\end{proof}

The following two lemmas are simple but useful in our study.

\begin{lem}\label{lem:lowsphere}
Let $p$ be a prime and $\Xi$ a $\bZ_p$-homology sphere.
If $G$ acts on $\Xi$ with a $G$-fixed point $x$ of $\Xi$ and
there is a normal subgroup $H$ of $G$ 
such that $\Xi^{H}$ is a sphere of dimension $\leq 2$ 
or that $H$ is a $p$-group and $\dim T_{x}(\Xi)^{H} \leq 2$,
then $\Xi^{G}$ is a sphere of dimension $\leq 2$.
\end{lem}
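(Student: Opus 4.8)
The plan is to transfer the problem to the action of the quotient $G/H$ on the low-dimensional sphere $\Xi^{H}$ and to determine its fixed-point set directly.

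First I would reduce the second alternative in the hypothesis to the first. Assume $H$ is a $p$-group with $\dim T_{x}(\Xi)^{H} \le 2$. Since $H$ acts on the $\bZ_{p}$-homology sphere $\Xi$, Smith's theorem (Lemma \ref{lem:smith}) shows that $\Xi^{H}$ is itself a $\bZ_{p}$-homology sphere; as the $H$-action is smooth and $x \in \Xi^{H}$, the fixed-point submanifold satisfies $T_{x}(\Xi^{H}) = T_{x}(\Xi)^{H}$, so $\dim \Xi^{H} = \dim T_{x}(\Xi)^{H} \le 2$. Hence in either alternative $\Xi^{H}$ is a $\bZ_{p}$-homology sphere of some dimension $d \le 2$. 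I would then note that such a manifold is a genuine sphere $S^{d}$: for $d = 0$ it is two points, for $d = 1$ it is $S^{1}$, and for $d = 2$ a brief universal-coefficients computation rules out positive-genus and non-orientable surfaces, leaving only $S^{2}$. Thus we may assume $\Xi^{H} = S^{d}$ with $d \in \{0,1,2\}$.

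Next I would use that $H \trianglelefteq G$. The quotient $Q := G/H$ then acts smoothly on $\Xi^{H} = S^{d}$, and a point is $G$-fixed exactly when it is $Q$-fixed, so $\Xi^{G} = (\Xi^{H})^{Q} = (S^{d})^{Q}$; moreover $x \in \Xi^{G}$, so this action has a fixed point. It remains to show that the fixed-point set of a finite group acting smoothly on $S^{d}$ ($d \le 2$), when nonempty, is again a sphere of dimension $\le d$. For $d = 0$ this is clear, since a permutation of the two points of $S^{0}$ that fixes one fixes both. For $d = 1$ one uses that a finite smooth action on $S^{1}$ is conjugate to an orthogonal one; a fixed point excludes every nontrivial rotation, so $Q$ acts trivially or through a single reflection, giving $(S^{1})^{Q} = S^{1}$ or $S^{0}$.

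The substantive case is $d = 2$, where I would invoke the classical fact that every finite group acting smoothly on $S^{2}$ is smoothly conjugate to a subgroup of $O(3)$. Granting this, $(S^{2})^{Q}$ is the unit sphere of the fixed subspace $V = (\bR^{3})^{Q}$; the existence of a fixed point forces $\dim V \ge 1$, so $(S^{2})^{Q}$ is $S^{0}$, $S^{1}$, or $S^{2}$. Assembling the three cases shows that $\Xi^{G} = (S^{d})^{Q}$ is a sphere of dimension $\le 2$, as claimed. The one genuinely nontrivial ingredient---and the step I expect to be the main obstacle---is precisely this linearizability of finite smooth actions on $S^{2}$, which is what prevents the fixed-point set from being, for instance, a disjoint union of several circles or an odd number of points; the remaining steps are routine bookkeeping with Smith theory and the classification of low-dimensional homology spheres.
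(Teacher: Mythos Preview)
Your proof is correct and follows the same route as the paper: reduce to $\Xi^{G}=(\Xi^{H})^{G/H}$ and then use that a nonempty fixed-point set of a finite group acting on a sphere of dimension $\le 2$ is again such a sphere. The paper compresses the whole argument into one sentence and simply asserts that last fact, whereas you spell out both the Smith-theory reduction (including why a $\bZ_{p}$-homology sphere of dimension $\le 2$ is a genuine sphere) and the linearizability of finite smooth actions on $S^{1}$ and $S^{2}$; your identification of the $S^{2}$ linearization as the only genuinely nontrivial ingredient is exactly right.
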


\begin{proof}
The assertion follows since $\Xi^{G} = (\Xi^{H})^{G/H}$ and 
the fact that nonempty fixed point sets 
of spheres of dimensions $\leq 2$ 
are spheres of dimensions $\leq 2$.
\end{proof}

\begin{lem}\label{lem:onedim}
Let $G$ act on a closed manifold $M$ 
with a $G$-fixed point.
If there exists a normal subgroup $H$ of $G$ such that
$\dim T_{x}(M)^{H}=1$ for each $x \in M^{G}$,
then $\chi(M^{G})$ is a nonnegative even integer.
\end{lem}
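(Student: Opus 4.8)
The plan is to approach the statement geometrically by localizing $M^{G}$ inside the fixed-point manifold $M^{H}$ and reducing the computation of $\chi(M^{G})$ to the elementary analysis of finite group actions on circles. First I would invoke the standard fact that, for a smooth action of a finite group, $M^{H}$ is a closed smooth submanifold of $M$ with $T_{x}(M^{H}) = T_{x}(M)^{H}$ for every $x \in M^{H}$. Since $M^{G} \subseteq M^{H}$ and, by hypothesis, $\dim T_{x}(M)^{H} = 1$ for each $x \in M^{G}$, every connected component of $M^{H}$ that meets $M^{G}$ is a compact connected $1$-manifold, i.e.\ a circle $S^{1}$. Because $H \nsl G$, the group $G$ preserves $M^{H}$; and since $G$ fixes $M^{G}$ pointwise, each such component is mapped to itself by $G$. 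Letting $N$ be the union of the (finitely many) components of $M^{H}$ meeting $M^{G}$, I obtain a $G$-invariant finite disjoint union of circles with $N^{G} = M^{G}$, whence $\chi(M^{G}) = \chi(N^{G}) = \sum_{C} \chi(C^{G})$, the sum running over the circle components $C$ of $N$.

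It then remains to analyze the $G$-action on a single circle $C$ with $C^{G} \neq \emptyset$. I would let $G^{+} \le G$ be the subgroup acting on $C$ by orientation-preserving diffeomorphisms, so that $[G:G^{+}] \le 2$. A finite group of orientation-preserving diffeomorphisms of $S^{1}$ possessing a common fixed point $x$ acts trivially, since restricting to $C \smallsetminus \{x\} \cong \bR$ exhibits each element as a finite-order orientation-preserving homeomorphism of the line, which must be the identity; hence $G^{+}$ fixes $C$ pointwise. If $G = G^{+}$, then $C \subseteq M^{G}$ and $\chi(C^{G}) = \chi(S^{1}) = 0$; otherwise $G$ acts on $C$ through $G/G^{+} \cong C_{2}$ by an orientation-reversing involution, whose fixed-point set consists of exactly two points, so $\chi(C^{G}) = 2$. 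In either case $\chi(C^{G})$ is a nonnegative even integer, and summing over the components $C$ shows that $\chi(M^{G}) = 2 \cdot \#\{\,C : |C^{G}| = 2\,\}$ is a nonnegative even integer, as desired.

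The part requiring the most care, and which I regard as the crux of the argument, is the classification of the $G$-action on each circle: one must verify that the two degenerate possibilities, namely that $C$ is entirely fixed or that $C^{G}$ consists of precisely two points, are the only ones compatible with $C^{G} \neq \emptyset$. Everything else, in particular the submanifold structure of $M^{H}$ and the identification $T_{x}(M^{H}) = T_{x}(M)^{H}$, is a routine application of the local slice description of fixed-point sets, and the $G$-invariance of the union $N$ of the relevant components follows immediately from the normality of $H$ together with the fact that $G$ fixes $M^{G}$ pointwise.
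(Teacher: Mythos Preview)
Your proof is correct and follows essentially the same approach as the paper's: both localize $M^{G}$ inside the circle components of $M^{H}$ that meet $M^{G}$ and then use that a finite group action on $S^{1}$ with a fixed point has fixed set of Euler characteristic $0$ or $2$. The paper's version is simply terser---it records the decomposition $M^{G} = \coprod_{x}(M^{H}_{x})^{G/H}$ and leaves the circle analysis implicit---whereas you spell out the orientation argument on each circle explicitly.
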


\begin{proof}
For $x \in M^{G}$, let $M^{H}_{x}$ denote 
the connected component of $M^{H}$ including $x$.
Since each connected component $M^{H}_{x}$ 
is a $1$-dimensional closed connected manifold,
i.e. $M^{H}_{x} \cong S^1$, with $G/H$-action,
the Euler characteristic of
\[
M^{G} = \coprod_{x \in M^{G}} (M^{H}_{x})^{G/H}
\]
is a nonnegative even integer.
\end{proof}

Let $\rho : G \to SO(n)$ be a faithful real representation of degree $n$
and let $U$ be the $n$-dimensional faithful $\bR[G]$-module
with a $G$-invariant inner product
determined by $\rho$, up to isomorphism.
Let $L$ be a normal subgroup of $G$.
Since the $L$-fixed-point set $U^{L}$ is an $\bR[G/L]$-module,
$U$ decomposes into $U^{L}$
 and its orthogonal complement $U_{L}$ as $\bR[G]$-modules,
i.e. $U=U^{L} \oplus U_{L}$.
Thus, $\rho : G \to SO(n)$ also decomposes into two subrepresentations
\[
\rho^{L} : G \to O(l) \; {\rm and}\; \rho_{L} : G \to O(m)
\]
corresponding to $U^{L}$ and $U_{L}$, respectively. 
Here, $l=\dim U^{L}$ and $m=\dim U_{L}$.
Let $H$ and $K$ denote the kernels of $\rho^{L}$ and $\rho_{L}$, respectively.
Since $H$ (resp. $K$) is the kernel of $\rho^{L}$ (resp. $\rho_{L}$),
the quotient representation
\[
\rho^{L}/H : G/H \to O(l)\; ({\rm resp.}\,\, \rho_{L}/K : G/K \to O(m))
\]
is faithful.
As $\rho$ is faithful, it follows that $H \cap K = E$.
Therefore,
the quotient projection from $G$ to $G/H \times G/K$ is injective
and we have the following commutative diagram
\[
\xymatrix{
G \ar[r] \ar[rrrd]_{\rho}  & G/H \times G/K  \ar[rr]^{\rho^{L}/H \times \rho_{L}/K} 
& {}\ar@{}[dr]|\circlearrowleft   & O(l) \times O(m)  \ar[d]^{\subset} \\
{}        & {} & {} & SO(n). \\
}
\]
The restriction $\rho|_{H}$ (resp. $\rho|_{K}$) 
is orientation-preserving and faithful
and the restriction ${\rho^{L}}|_{H}$ (resp. $\rho_{L}|_{K}$) is trivial.
This implies that the restriction ${\rho_{L}}|_{H}$ (resp. $\rho^{L}|_{K}$) 
is orientation-preserving and faithful.
Therefore, $H$ (resp. $K$) is isomorphic to a finite subgroup of $SO(m)$ (resp. $SO(l)$).
Putting these together, we obtain the following proposition.

\begin{prop}\label{prop:SOV}
Let $L$ be a normal subgroup of $G$ and $\rho : G \to SO(n)$ a faithful real representation of degree $n$.
Let $H$ and $K$ be the kernels of $\rho^{L}$ and $\rho_{L}$, respectively.
Then the following holds:
\begin{enumerate}
  \item[$(1)$] The intersection of $H$ and $K$ is trivial, i.e. $H \cap K = E$.
  \item[$(2)$] The projection $G \to G/H \times G/K$ is injective.
  \item[$(3)$] $H$ is isomorphic to a finite subgroup of $SO(m)$ and it contains $L$.
  \item[$(4)$] $K$ is isomorphic to a finite subgroup of $SO(l)$.
  \item[$(5)$] $G/H$ is isomorphic to a finite subgroup of $O(l)$.
  \item[$(6)$] $G/K$ is isomorphic to a finite subgroup of $O(m)$.
\end{enumerate}
Here, $l$ and $m$ are degrees of $\rho^{L}$ and $\rho_{L}$, respectively.
\end{prop}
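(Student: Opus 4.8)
The statement collects precisely the structural facts that the orthogonal decomposition $U = U^{L} \oplus U_{L}$ forces on $G$, so my plan is to verify $(1)$--$(6)$ directly from that splitting, paying attention to orientation wherever the target is $SO$ rather than $O$. First I would record the two facts that make the decomposition $G$-equivariant: since $L$ is normal, $U^{L}$ is a $G$-submodule (for $g \in G$, $\ell \in L$ and $v \in U^{L}$ one has $\ell \cdot gv = g(g^{-1}\ell g)v = gv$), and since the inner product is $G$-invariant its orthogonal complement $U_{L}$ is a $G$-submodule as well. Consequently $\rho$ splits as $\rho = \rho^{L} \oplus \rho_{L}$, where $\rho^{L}(g)$ acts on the $l$-dimensional space $U^{L}$ and $\rho_{L}(g)$ on the $m$-dimensional space $U_{L}$.

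For $(1)$, any $g \in H \cap K$ acts trivially on both $U^{L}$ and $U_{L}$, hence trivially on $U$; faithfulness of $\rho$ then gives $g = e$, so $H \cap K = E$. Claim $(2)$ is immediate, as the kernel of the projection $g \mapsto (gH, gK)$ is exactly $H \cap K$. Parts $(5)$ and $(6)$ are also formal: $H$ and $K$ are by construction the kernels of $\rho^{L}$ and $\rho_{L}$, so the induced homomorphisms $\rho^{L}/H : G/H \to O(l)$ and $\rho_{L}/K : G/K \to O(m)$ are well-defined and injective, realizing $G/H$ and $G/K$ as finite subgroups of $O(l)$ and $O(m)$.

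The substance of the proposition is $(3)$ (and, symmetrically, $(4)$). The inclusion $L \subseteq H$ is definitional, since $L$ fixes $U^{L}$ pointwise and hence lies in $\ker \rho^{L} = H$. For the embedding I would argue in two stages. Every $h \in H$ satisfies $\rho(h) = \mathrm{id}_{U^{L}} \oplus \rho_{L}(h)$, so $\rho(h)$ is determined by $\rho_{L}(h)$; faithfulness of $\rho$ therefore passes to $\rho_{L}|_{H}$, giving a faithful embedding $H \hookrightarrow O(m)$. To refine $O(m)$ to $SO(m)$ I would use multiplicativity of the determinant across the orthogonal splitting: $\det \rho(h) = \det \rho^{L}(h)\,\det \rho_{L}(h)$. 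Here $\det \rho^{L}(h) = 1$ because $h$ acts as the identity on $U^{L}$, while $\det \rho(h) = 1$ because $\rho$ takes values in $SO(n)$; hence $\det \rho_{L}(h) = 1$ and the faithful image $\rho_{L}(H)$ lies in $SO(m)$. Part $(4)$ follows by exchanging the roles of $U^{L}$ and $U_{L}$, of $H$ and $K$, and of $l$ and $m$.

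The only step needing genuine care --- and the one I expect to be the sole obstacle --- is this orientation bookkeeping in $(3)$ and $(4)$: obtaining a faithful image in the full orthogonal group is routine, but landing inside the special orthogonal group depends on the determinant splitting together with the triviality of the determinant contributed by the fixed summand. The remaining assertions are direct consequences of the $\bR[G]$-module decomposition.
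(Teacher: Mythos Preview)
Your argument is correct and follows essentially the same route as the paper: the paper also proves the proposition in the paragraph preceding its statement by splitting $\rho = \rho^{L} \oplus \rho_{L}$, obtaining $(1)$, $(2)$, $(5)$, $(6)$ formally from the kernels, and deducing $(3)$--$(4)$ from the observation that $\rho|_{H}$ is faithful and orientation-preserving while $\rho^{L}|_{H}$ is trivial, forcing $\rho_{L}|_{H}$ to be faithful and orientation-preserving. Your explicit determinant bookkeeping $\det\rho(h)=\det\rho^{L}(h)\cdot\det\rho_{L}(h)$ is exactly what the paper means by ``orientation-preserving,'' just spelled out in more detail.
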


The method of Proposition \ref{prop:SOV} appeared 
in the proof of \cite[Theorem {\rm I\hspace{-.01em}I}.4., Assertion 3]{BucKwaSch}.
Applying Proposition \ref{prop:SOV} for $L =F(G)$,
we have the following:

\begin{prop}\label{prop:FG}
In the situation of Proposition \ref{prop:SOV},
if $L=F(G)$ then the Fitting subgroup $F(K)$ of $K$ is trivial.
\end{prop}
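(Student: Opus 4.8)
The plan is to exploit the fact that $K$ is normal in $G$ together with the maximality property of the Fitting subgroup $F(G) = L$.

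First I would observe that $K = \ker \rho_{L}$ is a normal subgroup of $G$, being the kernel of the representation $\rho_{L} \colon G \to O(m)$. Consequently its Fitting subgroup $F(K)$, which is a characteristic subgroup of $K$, is itself normal in $G$: for any $g \in G$, conjugation by $g$ preserves $K$ (as $K \nsl G$) and hence restricts to an automorphism of $K$, which must fix the characteristic subgroup $F(K)$; thus $g F(K) g^{-1} = F(K)$.

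Next, since $F(K)$ is a \emph{nilpotent} normal subgroup of $G$ and $F(G)$ is by definition the largest nilpotent normal subgroup, I would conclude $F(K) \subseteq F(G) = L$. By Proposition \ref{prop:SOV}(3) we have $L \subseteq H$, so $F(K) \subseteq H$. Combining this with the trivial inclusion $F(K) \subseteq K$ and the triviality $H \cap K = E$ from Proposition \ref{prop:SOV}(1) gives $F(K) \subseteq H \cap K = E$, that is, $F(K)$ is trivial.

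The argument reduces to a short containment chain, so I do not expect a serious obstacle here. The only points requiring care are the two classical group-theoretic facts being invoked: that the Fitting subgroup is characteristic (so that a characteristic subgroup of a normal subgroup is again normal in the ambient group), and that $F(G)$ absorbs every nilpotent normal subgroup of $G$, the latter being a consequence of Fitting's theorem that a product of nilpotent normal subgroups is nilpotent.
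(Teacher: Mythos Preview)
Your proof is correct and follows essentially the same approach as the paper: both argue that $F(K)$ is normal in $G$ (being characteristic in the normal subgroup $K$), hence contained in $F(G)=L\subseteq H$, and then conclude $F(K)\subseteq H\cap K=E$ via Proposition~\ref{prop:SOV}(1) and (3). The only cosmetic difference is that the paper phrases the containment $F(K)\subseteq F(G)$ as $F(G)\cdot F(K)=F(G)$ after invoking Fitting's theorem, whereas you state the containment directly.
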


\begin{proof}
Since $F(K)$ is characteristic in $K$ and $K$ is normal in $G$,
$F(K)$ is normal in $G$.
The product of two nilpotent normal subgroups of a finite group 
is again a nilpotent normal subgroup of the finite group 
(see \cite[Lemma 1.1, Chapter 6, p.217]{Gor}). 
Hence, the product $F(G) \cdot F(K)$ is 
a nilpotent normal subgroup of $G$.
By the definition of $F(G)$, we have $F(G) \cdot F(K) = F(G)$,
and the assertion $(1)$ in Proposition \ref{prop:SOV} 
indicates that $F(G) \cap F(K) =E$.
This implies that $F(K)$ is trivial.
\end{proof}

We now give some information about 
finite subgroups of $O(2) \times O(2) \times O(2)$ and $SO(3) \times SO(3)$ 
whose Fitting subgroups are cyclic,
which will be required in our proof of Lemma \ref{lem:abelian}.
It is well-known that a nontrivial finite subgroup of $SO(3)$ is isomorphic to either
\[
C_n, \;D_{2n}, \;A_4, \;S_4, \, {\rm or} \; A_5\;(n \geq 2),
\]
where $C_n$ (resp. $D_{2n}$) denotes 
a cyclic (resp. a dihedral) group of order $n$ (resp. $2n$).
Note that $A_4$ and $S_4$ contain $D_4$ 
as the unique minimal normal subgroup.

\begin{prop}\label{prop:DDD}
Let $G$ be a finite subgroup of $O(2) \times O(2) \times O(2)$.
If the Fitting subgroup $F(G)$ is cyclic then $G$ belongs to $\mathcal{G}_{1}^{2}$.
\end{prop}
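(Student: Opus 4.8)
The plan is to isolate the ``pure rotation'' part of $G$ and to show that it already serves as the cyclic normal subgroup demanded by the definition of $\mathcal{G}_1^2$.

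First I would set $A = G \cap (SO(2) \times SO(2) \times SO(2))$, the intersection of $G$ with the rotation subgroup of $O(2)^{3}$. Since each factor $SO(2)$ is abelian and normal of index $2$ in $O(2)$, the product $SO(2)^{3}$ is an abelian normal subgroup of $O(2)^{3}$; hence $A$ is an abelian subgroup of $G$, and it is normal in $G$ because for $g \in G \subseteq O(2)^{3}$ and $a \in A$ the element $gag^{-1}$ lies both in $G$ and in the normal subgroup $SO(2)^{3}$. The quotient $O(2)^{3}/SO(2)^{3}$ is the elementary abelian $2$-group $C_2^{3}$, and restricting the projection $O(2)^{3} \to C_2^{3}$ to $G$ exhibits $G/A$ as a subgroup of $C_2^{3}$. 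In particular $G/A$ is a $2$-group.

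Next, because $A$ is an abelian normal subgroup of $G$, it is in particular a nilpotent normal subgroup and is therefore contained in the Fitting subgroup $F(G)$. By hypothesis $F(G)$ is cyclic, so its subgroup $A$ is cyclic as well. Putting these together, the normal sequence $E \trianglelefteq A \trianglelefteq G$ has trivial bottom term (so $P = E$, i.e.\ the $p=1$ case), cyclic middle quotient $A/E = A$, and a $2$-group top quotient $G/A$, which is precisely the condition for $G$ to belong to $\mathcal{G}_1^{2}$.

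The argument is short, and I do not anticipate a genuine obstacle; the two points deserving a little care are the verification that $A$ is actually normal in $G$ (not merely abelian) and the observation that the hypothesis ``$F(G)$ cyclic'' is exactly what upgrades the abelian subgroup $A$ to a cyclic one. Indeed, without cyclicity of $F(G)$ the subgroup $A$ could be a product of several cyclic factors, and then $G$ need not lie in $\mathcal{G}_1^{2}$, so this hypothesis is used in an essential way.
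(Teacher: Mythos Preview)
Your argument is correct and follows essentially the same approach as the paper: both isolate an abelian normal subgroup of $G$ with $2$-group quotient (you use $A = G \cap SO(2)^{3}$ directly, the paper passes through a finite overgroup $D = D_{2l}\times D_{2m}\times D_{2n}$ and uses $G\cap F(D)$), observe that this subgroup lies in $F(G)$, and then invoke cyclicity of $F(G)$ to land in $\mathcal{G}_1^{2}$. Your version is marginally cleaner in that it avoids the preliminary embedding into a product of dihedral groups.
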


\begin{proof}
We may assume that $G$ is a subgroup of
$D = D_{2l} \times D_{2m} \times D_{2n}$
for some positive integers $l$, $m$, and $n$.
It is easy to see that $D$ is an extension of a $2$-group by $F(D)$.
As $GF(D)/F(D)$ is isomorphic to $G/(G \cap F(D))$,
see \cite[Chapter 1, Theorem 2.6, p.6]{Gor},
$G$ is an extension of a $2$-group by a nilpotent group $G \cap F(D)$.
This implies that $G$ is an extension of a $2$-group by $F(G)$.
As $F(G)$ is cyclic, $G$ belongs to $\mathcal{G}_{1}^{2}$.
\end{proof}

The next result is often used to describe 
the structure of subgroups in the Cartesian product of two finite groups.

\begin{lem}[{\rm Goursat's lemma, \cite[Exercise 5, p.75]{Lang}}]\label{lem:Goursat}
Let $H$ and $K$ be finite groups and $G$ a subgroup of $H \times K$ 
such that the two projections $\pi_{1} : G \to H$ and $\pi_{2} : G \to K$ are surjective.
Let $H_{1}$ and $K_{1}$ be the kernels of $\pi_{2}$ and $\pi_{1}$, respectively.
Then the following holds:
\begin{enumerate}
 \item[$(1)$] The kernels $H_{1}$ and $K_{1}$ can be identified with 
 normal subgroups of $H$ and $K$, respectively.
 \item[$(2)$] The image of $G$ in $H/H_{1} \times K/K_{1}$ is 
 the graph of an isomorphism from $H/H_{1}$ to $K/K_{1}$.
\end{enumerate}
In particular, there exists a group exact sequence
\[
E \to H_{1} \times K_{1} \to G \to H/H_{1} (\cong K/K_{1}) \to E.
\] 
\end{lem}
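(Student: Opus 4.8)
The plan is to work entirely inside $H \times K$ and to pin down the two identifications before proving anything. Since $\pi_2$ is the projection onto $K$, its kernel is $H_1 = \{(h,e) \in G\}$, and $\pi_1|_{H_1}$ is injective; I would identify $H_1$ with its image $\overline{H}_1 = \{h \in H : (h,e) \in G\} \leq H$. Symmetrically, $K_1 = \{(e,k) \in G\}$ is identified with $\overline{K}_1 = \{k \in K : (e,k) \in G\} \leq K$. Before anything else I would isolate the single observation that drives the whole argument: whenever an element $(h,k) \in G$ has $h \in \overline{H}_1$, then $(h,e) \in G$ forces $(e,k) = (h,e)^{-1}(h,k) \in G$, so $k \in \overline{K}_1$, and the reverse implication holds by symmetry. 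I will call this the coordinate-exchange observation.

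For assertion $(1)$, to see $\overline{H}_1 \trianglelefteq H$, I would take any $h \in H$ and use surjectivity of $\pi_1$ to find $k$ with $(h,k) \in G$. For $h_1 \in \overline{H}_1$ one has $(h_1,e) \in G$, and since $H_1 = \ker\pi_2$ is normal in $G$, the conjugate $(h,k)(h_1,e)(h,k)^{-1} = (h h_1 h^{-1}, e)$ lies in $G$; hence $h h_1 h^{-1} \in \overline{H}_1$. As $h$ ranges over all of $H$, this gives normality, and the argument for $\overline{K}_1 \trianglelefteq K$ is identical using surjectivity of $\pi_2$.

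For assertion $(2)$, the heart of the matter is to build the claimed isomorphism directly. For $h \in H$ I would choose, by surjectivity of $\pi_1$, some $k$ with $(h,k) \in G$ and set $\psi(h\overline{H}_1) = k\overline{K}_1$. The one place requiring genuine care, rather than routine computation, is checking that $\psi$ is well defined, i.e. independent both of the representative $h$ of the coset and of the chosen lift $k$. Both independences reduce to the coordinate-exchange observation: if $(h,k),(h',k') \in G$ with $h^{-1}h' \in \overline{H}_1$, then $(h^{-1}h', k^{-1}k') \in G$ forces $k^{-1}k' \in \overline{K}_1$. Once well-definedness is in hand, the homomorphism property is immediate from componentwise multiplication in $H \times K$, surjectivity follows from surjectivity of $\pi_2$, and injectivity is again the coordinate-exchange observation. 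The image of $G$ in $H/\overline{H}_1 \times K/\overline{K}_1$ is then exactly $\{(h\overline{H}_1, \psi(h\overline{H}_1)) : h \in H\}$, which is precisely the graph of $\psi$.

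Finally, for the exact sequence I would take the surjection $G \to H/\overline{H}_1$ obtained by composing $\pi_1$ with the quotient $H \to H/\overline{H}_1$. Its kernel consists of those $(h,k) \in G$ with $h \in \overline{H}_1$, and the coordinate-exchange observation shows this also forces $k \in \overline{K}_1$, so the kernel equals $H_1 K_1 \cong \overline{H}_1 \times \overline{K}_1$; the product is direct because $H_1 \subseteq H \times \{e\}$ and $K_1 \subseteq \{e\} \times K$ commute and meet trivially. Combined with the isomorphism $\psi : H/\overline{H}_1 \xrightarrow{\sim} K/\overline{K}_1$ from $(2)$, this yields the stated sequence $E \to H_1 \times K_1 \to G \to H/H_1 \to E$. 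The whole proof is elementary; the only real subtlety is the well-definedness of $\psi$, and I expect extracting the coordinate-exchange observation once at the outset to absorb essentially all of that difficulty.
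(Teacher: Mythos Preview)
Your proof is correct and is the standard elementary argument for Goursat's lemma. Note, however, that the paper does not supply its own proof of this statement: it is quoted as a known result with a reference to Lang's \emph{Algebra} (Exercise~5, p.~75), so there is no proof in the paper to compare against. What you have written is exactly the kind of verification one would give for that exercise, and every step checks out; in particular, your ``coordinate-exchange observation'' is the right organizing principle, and your identification of the kernel of $G \to H/\overline{H}_1$ with $H_1K_1 \cong \overline{H}_1 \times \overline{K}_1$ is handled cleanly.
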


\begin{prop}\label{prop:SO3SO3}
Let $G$ be a finite subgroup of $SO(3) \times SO(3)$.
If the Fitting subgroup $F(G)$ is cyclic, then we have the following:
\begin{enumerate}
\item[$(1)$] If $G$ is nonsolvable then $G$ is isomorphic to $A_5 \times L$, 
where $L$ is a finite subgroup of $SO(3)$ with cyclic $F(L)$.
\item[$(2)$] If $G$ is solvable then $G$ belongs to $\hen{G}^{2}_{1}$.
\end{enumerate}
\end{prop}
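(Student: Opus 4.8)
The plan is to combine Goursat's lemma (Lemma \ref{lem:Goursat}) with the classification of finite subgroups of $SO(3)$ recalled just above the statement. Write $\pi_1,\pi_2\colon G\to SO(3)$ for the two coordinate projections and set $H=\pi_1(G)$ and $K=\pi_2(G)$; each is a finite subgroup of $SO(3)$, hence one of $C_n$, $D_{2n}$, $A_4$, $S_4$, $A_5$. Since the restricted projections onto $H$ and $K$ are surjective, Lemma \ref{lem:Goursat} supplies normal subgroups $H_1\trianglelefteq H$ and $K_1\trianglelefteq K$ (the kernels of $\pi_2|_G$ and $\pi_1|_G$) with $H/H_1\cong K/K_1$ and an exact sequence $E\to H_1\times K_1\to G\to H/H_1\to E$. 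I will use the identifications $H_1\times E=G\cap(H\times E)\trianglelefteq G$ and $E\times K_1=G\cap(E\times K)\trianglelefteq G$. The background facts I rely on are that $A_5$ is the only nonsolvable group in the list, that $A_4$ and $S_4$ have $D_4$ as their unique minimal normal subgroup, and that every nilpotent normal subgroup of a finite group lies in its Fitting subgroup.

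For part $(1)$, since a product of two solvable groups is solvable and $G\le H\times K$, nonsolvability of $G$ forces $H$ or $K$ to be nonsolvable; by the symmetry that swaps the two $SO(3)$ factors I may assume $H=A_5$. As $A_5$ is simple, $H_1\in\{E,A_5\}$. If $H_1=A_5$, then $H/H_1=E$, so $K_1=K$ and $G=A_5\times K$; setting $L=K$ and noting that the image in $A_5$ of any nilpotent normal subgroup of $A_5\times K$ is a nilpotent normal subgroup of the simple nonabelian group $A_5$, hence trivial, one obtains $F(G)\cong F(L)$, which is cyclic by hypothesis. If instead $H_1=E$, then $\pi_2|_G$ is injective and $K/K_1\cong A_5$; since the only subgroup of $SO(3)$ admitting $A_5$ as a quotient is $A_5$ itself, we get $K=A_5$, $K_1=E$, and $G$ is the graph of an isomorphism $A_5\to A_5$, so $G\cong A_5=A_5\times E$ with $L=E$. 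In either case $G\cong A_5\times L$ with $F(L)$ cyclic.

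For part $(2)$, solvability of $G$ forces $H$ and $K$ to be solvable, hence each is $C_n$, $D_{2n}$, $A_4$, or $S_4$. The heart of the argument, and the step I expect to be the main obstacle, is to exclude $A_4$ and $S_4$ as projections by exploiting that $F(G)$ is cyclic. Suppose $H\in\{A_4,S_4\}$. If $H_1\ne E$, then $H_1$ contains the unique minimal normal subgroup $D_4$, so $D_4\times E$ is a nilpotent normal subgroup of $G$ and thus $D_4\le F(G)$, contradicting cyclicity. If $H_1=E$, then $G\cong K$ with $K/K_1\cong H$, and since the only solvable subgroup of $SO(3)$ admitting $A_4$ (resp. $S_4$) as a quotient is $A_4$ (resp. $S_4$) itself, we get $G\cong H$ and $F(G)\cong D_4$, again a contradiction. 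Therefore $H,K\in\{C_n,D_{2n}\}$, i.e. $H,K\le O(2)$, so that $G\le H\times K\le O(2)\times O(2)\times O(2)$. As $F(G)$ is cyclic, Proposition \ref{prop:DDD} gives $G\in\mathcal{G}_1^2$, completing the proof.
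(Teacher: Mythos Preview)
Your proof is correct and follows essentially the same approach as the paper's: both use Goursat's lemma together with the classification of finite subgroups of $SO(3)$, exclude $A_4$ and $S_4$ as projections in the solvable case by exploiting that their unique minimal normal subgroup $D_4$ would land inside $F(G)$, and then reduce to the $O(2)\times O(2)$ situation handled by Proposition~\ref{prop:DDD}. The paper organizes the case analysis slightly differently (first disposing of the case where one of $H_1,K_1$ is trivial, then treating the nonsolvable and solvable cases when both are nontrivial), whereas you split by the conclusion $(1)$/$(2)$ first; your exclusion of $A_4$ and $S_4$ is spelled out more explicitly than the paper's one-line remark, but the content is the same.
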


\begin{proof}
We may assume that $G$ is a subgroup of $H \times K$
having the canonical surjective projections $\pi_{1} : G \to H$ 
and $\pi_{2} : G \to K$,
where $H$ and $K$ are finite subgroups of $SO(3)$.
Let $H_{1}$ and $K_{1}$ be the normal subgroups of $H$ and $K$ 
corresponding to $\ker \pi_{2}$ and $\ker \pi_{1}$, respectively.
If $H_{1}$ or $K_{1}$ is trivial 
then $G$ is isomorphic to a finite subgroup of $SO(3)$.
Then it is easily checked that 
$G$ satisfies the assertion (1) or (2) when $F(G)$ is cyclic.
We next suppose that both $H_{1}$ and $K_{1}$ are nontrivial.
If $G$ is nonsolvable then 
it holds that $H = H_{1} \cong A_5$ or $K = K_{1} \cong A_5$.
Thus, by Lemma \ref{lem:Goursat},
$G$ is isomorphic to $A_5 \times L$ 
for a finite subgroup $L$ of $SO(3)$ with cyclic $F(L)$.
We suppose that $G$ is solvable.
By the hypothesis that $F(G)$ is cyclic,
$G$ may be a subgroup of $D_{2m} \times D_{2n}$ 
for some positive integers $m$ and $n$,
because $A_4$ and $S_4$ contain $D_4$ 
as the unique minimal normal subgroup.
As seen in the proof of Proposition \ref{prop:DDD},
$G$ is an extension of a $2$-group by $F(G)$.
As $F(G)$ is cyclic,
$G$ belongs to $\mathcal{G}_{1}^{2}$.
\end{proof}

At the end of this section,
we prepare nonabelian characteristically 
finite simple subgroups of $SO(6)$, up to isomorphism.
It is known that nonabelian finite simple subgroups of $SO(5)$ 
are isomorphic to either $A_5$ or $A_6$,
see e.g.,  \cite[Corollary1, p.742]{MecZim2} or \cite[Proposition 5]{BucKwaSch}.
Thus it remains to find nonabelian finite simple subgroups $G$ of $SO(6)$
not isomorphic to $A_5$ or $A_6$.
Such nonabelian finite simple groups $G$ have an irreducible complex representation 
of degree $3$ or $6$, which is already known:

\begin{prop}[{\rm \cite[Section 82, p.113]{Bli}, \cite[Section 3, p.773]{Lin}}]\label{prop:nonabelian}
If a nonabelian finite simple group $G$ admits 
an irreducible complex representation of degree $3$ (resp. $6$),
then $G$ is isomorphic to 
$A_5$ or $PSL(2,7)$ (resp. $A_7$, $PSL(2,7)$, $PSU(4,2)$, or $PSU(3,3)$).
\end{prop}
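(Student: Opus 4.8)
The plan is to reduce the statement to the classification of finite \emph{primitive} complex linear groups in the two relevant degrees, and then to read off the simple groups from the known lists. First I would observe that the representation $\rho$ may be taken faithful: its kernel is a proper normal subgroup of the simple group $G$, hence trivial. Moreover, since $G$ is nonabelian simple it is perfect, so the composite $\det\circ\rho$ is a homomorphism into the abelian group $\bC^{\times}$ and must be trivial; thus $\rho$ lands in $SL(d,\bC)$ with $d\in\{3,6\}$. Next I would reduce to the primitive case. If $\rho$ were imprimitive there would be a decomposition $\bC^{d}=V_{1}\oplus\cdots\oplus V_{k}$ with $k\mid d$, $k\geq 2$, whose summands are permuted transitively by $G$; the resulting homomorphism $G\to S_{k}$ has normal, hence trivial or full, kernel. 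A full kernel would leave each $V_{i}$ invariant, contradicting irreducibility, so the kernel is trivial and $G$ embeds into $S_{k}$ with $k\leq d\leq 6$. For $d=3$ this forces $G\hookrightarrow S_{3}$, impossible for a nonabelian simple group; for $d=6$ the only nonabelian simple subgroups of $S_{k}$, $k\mid 6$, are $A_{5}$ and $A_{6}$, neither of which has an irreducible character of degree $6$ (their degrees are $1,3,3,4,5$ and $1,5,5,8,8,9,10$), so no imprimitive example survives. Hence $\rho$ is primitive.

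With primitivity in hand I would invoke the classical classifications of finite primitive subgroups of $SL(3,\bC)$ and $SL(6,\bC)$. In degree $3$, Blichfeldt's determination of the primitive collineation groups of the plane shows that the only nonabelian simple linear groups arising are $A_{5}\cong PSL(2,5)$ and $PSL(2,7)\cong PSL(3,2)$; the Valentiner configuration contributes only the triple cover $3\cdot A_{6}$, and $A_{6}$ itself has no genuine linear degree-$3$ character. In degree $6$, Lindsey's classification of the finite primitive subgroups of $SL(6,\bC)$ yields, after discarding the proper covering groups $m\cdot A_{n}$ and the like, exactly the simple linear groups $A_{7}$, $PSL(2,7)$, $PSU(4,2)$, and $PSU(3,3)$; for each I would confirm the presence of an irreducible degree-$6$ character against its known character table, noting that each of these representations is centerless and primitive (for instance, none of the four groups embeds in $S_{6}$). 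This establishes both halves of the proposition.

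The main obstacle is clearly the degree-$6$ case: the classification of finite primitive complex linear groups of degree $6$ is a substantial body of representation theory and is not something one reproves in a few lines, which is precisely why I would cite Lindsey (and the allied work of Feit, Wales, and Huffman) rather than redevelop it. As an independent cross-check I would run the modern argument: by the classification of finite simple groups together with the Landazuri--Seitz lower bounds on the minimal faithful degrees of the groups of Lie type, and the tabulated minimal degrees of the alternating and sporadic groups (all at least $7$ apart from the small cases already listed), the simple groups admitting a faithful irreducible representation of degree $\leq 6$ form a short explicit list; inspecting the character degrees of each then recovers precisely $A_{5},PSL(2,7)$ in degree $3$ and $A_{7},PSL(2,7),PSU(4,2),PSU(3,3)$ in degree $6$, in agreement with the classical count.
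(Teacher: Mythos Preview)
The paper does not supply its own proof of this proposition: it is stated as a known result with citations to Blichfeldt (degree $3$) and Lindsey (degree $6$), and is used as a black box. Your proposal is a correct and standard elaboration of how the statement is extracted from those classical classifications --- the reduction to faithful representations into $SL(d,\bC)$, the elimination of the imprimitive case via the permutation action on blocks (using that $A_5$ and $A_6$ have no irreducible character of degree $6$), and then the appeal to the Blichfeldt and Lindsey lists --- so there is nothing to compare against, and your sketch is entirely in line with what the citations encode. The optional cross-check via CFSG and the Landazuri--Seitz bounds is also sound and is the modern way to confirm such short lists.
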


In Proposition \ref{prop:nonabelian},
$PSL(2,7)$ denotes the projective special linear group of order $168$,
and $PSU(3,3)$ and $PSU(4,2)$ denote the projective special unitary groups 
of orders $6048$ and $25920$, respectively.
Using the GAP system \cite{GAP},
one can check that $A_5$ and $PSL(2,7)$ have 
two irreducible complex representations of degree $3$ 
(not isomorphic to each other) and 
that each of $A_7$, $PSL(2,7)$, $PSU(4,2)$, and $PSU(3,3)$ 
has only one irreducible complex representation of degree $6$ (up to isomorphism). 
Computing the Frobenius--Schur indicators of 
the irreducible complex representations of degrees $3$ and $6$ by using \cite{GAP},
we can obtain the irreducible real representations corresponding to the complex ones.
All real representations of nonabelian finite simple groups are orientation-preserving.
The following table shows the correspondences 
between the real and complex representations
of the nonabelian finite simple groups appeared in Proposition \ref{prop:nonabelian}.

\begin{table}[!ht]
\centering
\caption{The correspondences between the real and complex representations.}
\label{table:simple}
\begin{small}
\begin{tabular}{|c|c|c|c|c|c|c|c|c|c|c|}
\hline
Complex representations & Frobenius--Schur indicators & Real representations \\ \hline
  $A_5 \to SU(3)$       &   $1$  & $A_5 \to SO(3)$ \\ \hline
  $A_5 \to SU(3)$       &   $1$  & $A_5 \to SO(3)$ \\ \hline
  $PSL(2,7) \to SU(3)$  &   $0$  & $PSL(2,7) \to SO(6)$ \\ \hline
  $PSL(2,7) \to SU(3)$  &   $0$  & $PSL(2,7) \to SO(6)$ \\ \hline
  $A_7 \to SU(6)$       &   $1$  & $A_7 \to SO(6)$ \\ \hline
  $PSL(2,7) \to SU(6)$  &   $1$  & $PSL(2,7) \to SO(6)$ \\ \hline
  $PSU(4,2) \to SU(6)$  &   $1$  & $PSU(4,2) \to SO(6)$ \\ \hline
  $PSU(3,3) \to SU(6)$    &  $-1$  & $PSU(3,3) \to SO(12)$ \\ \hline
\end{tabular}
\end{small}
\end{table}

\begin{prop}\label{prop:SO6}
A nonabelian characteristically finite simple subgroup $G$ of $SO(6)$
is isomorphic to either 
\[
A_5,\, A_6,\, A_7,\, PSL(2,7),\, PSU(4,2),\, or\, A_5 \times A_5.
\]
\end{prop}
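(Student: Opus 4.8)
The plan is to combine the structure of a characteristically simple group with the representation-theoretic data collected in Proposition \ref{prop:nonabelian} and Table \ref{table:simple}. Write $G \cong T^{k}$, where $T$ is a nonabelian finite simple group and $k \geq 1$, and fix the faithful $6$-dimensional real $G$-module $V$ coming from the embedding $G \hookrightarrow SO(6)$. Two preliminary facts will be used throughout: every nontrivial irreducible complex representation of a nonabelian finite simple group has degree $\geq 3$ (there is no nontrivial degree-$1$ representation since $T$ is perfect, and no faithful degree-$2$ representation, since a finite simple nonabelian subgroup of $PGL(2,\bC)$ must be $A_5$, which has no irreducible complex representation of degree $2$), so every nontrivial irreducible real $T$-module has real dimension $\geq 3$; and, because a normal subgroup of a simple group is trivial or everything, every nontrivial representation of $T$ is automatically faithful.

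First I would treat the case $k=1$, so $G = T$ is simple. Decomposing $V$ into irreducible real $G$-modules and discarding trivial summands, I obtain a nontrivial, hence faithful, irreducible real summand $W$ with $3 \leq \dim_{\bR} W \leq 6$. If $\dim_{\bR} W \leq 5$, then $T$ embeds in $SO(\dim_{\bR} W) \subseteq SO(5)$ (the image lies in $SO$ because $T$ is perfect), so by the classification of nonabelian finite simple subgroups of $SO(5)$ recalled before Proposition \ref{prop:nonabelian}, $T$ is isomorphic to $A_5$ or $A_6$. If $\dim_{\bR} W = 6$, then $V = W$ is irreducible of real degree $6$, and its underlying complex representation is either an irreducible degree-$6$ complex representation (real type) or the realification of an irreducible degree-$3$ complex representation (complex type). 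Proposition \ref{prop:nonabelian} restricts $T$ to the listed groups, and the Frobenius--Schur indicators in Table \ref{table:simple} decide which genuinely land in $SO(6)$: the real-type degree-$6$ representations yield $A_7$, $PSL(2,7)$, and $PSU(4,2)$; the complex-type degree-$3$ representation of $PSL(2,7)$ realifies into $SO(6)$; while $PSU(3,3)$ is quaternionic, its realification being $12$-dimensional, so it does not occur. Thus $k=1$ gives exactly $A_5$, $A_6$, $A_7$, $PSL(2,7)$, and $PSU(4,2)$.

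Next I would handle $k \geq 2$ by a dimension count over $\bC$. The complexification $V_{\bC} = V \otimes_{\bR} \bC$ is a faithful complex representation of $T^{k}$ of degree $6$, and each of its irreducible summands is an external tensor product of irreducible representations of the $k$ factors. In such a summand, each factor acting nontrivially contributes a tensor factor of degree $\geq 3$, so a summand on which $r$ factors act nontrivially has degree $\geq 3^{r}$. Faithfulness forces every factor to act nontrivially on some summand; since the total degree is $6$ and a summand with $r \geq 2$ would have degree $\geq 9$, each factor must be covered by a summand involving it alone, and covering $k$ factors needs total degree $\geq 3k$. Hence $k = 2$, and $V$ splits as a sum of two $3$-dimensional real irreducible modules, one faithful for each factor. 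As $A_5$ is the only nonabelian finite simple group admitting a $3$-dimensional real irreducible representation (by Table \ref{table:simple} the degree-$3$ complex representations of $PSL(2,7)$ are of complex type and realify to degree $6$), we conclude $T \cong A_5$ and $G \cong A_5 \times A_5$, realized through $SO(3) \times SO(3) \subseteq SO(6)$.

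I expect the main obstacle to be the careful bookkeeping of Frobenius--Schur indicators in the $k = 1$, degree-$6$-irreducible case: one must correctly separate real-type, complex-type, and quaternionic-type representations in order to include $PSL(2,7)$ through its realified degree-$3$ representation while excluding $PSU(3,3)$, whose smallest real representation has dimension $12$. Combining the two cases yields the stated list $A_5$, $A_6$, $A_7$, $PSL(2,7)$, $PSU(4,2)$, $A_5 \times A_5$.
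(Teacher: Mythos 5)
Your proof is correct and follows essentially the same route as the paper: reduce via the classification of nonabelian simple subgroups of $SO(5)$, Proposition \ref{prop:nonabelian}, and the Frobenius--Schur indicators of Table \ref{table:simple} in the simple case, and a dimension count forcing $A_5 \times A_5$ in the non-simple case. You simply supply more detail than the paper does, notably the tensor-product degree bound $3^r$ justifying $k \leq 2$, which the paper compresses into the single remark that the only nonabelian finite simple subgroup of $SO(3)$ not in $SO(2)$ is $A_5$.
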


\begin{proof}
The subgroup $G$ of $SO(6)$ is isomorphic to the Cartesian product of 
isomorphic nonabelian simple groups $Q$.
If $G \cong Q$ then we have already seen that $Q$ is isomorphic to either
\[
A_5,\, A_6,\, A_7,\, PSL(2,7),\,{\rm or}\; PSU(4,2).
\]
If $G \not\cong Q$ then $G$ is isomorphic to $A_5 \times A_5$,
because a nonabelian finite simple group contained in $SO(3)$ but not in $SO(2)$ is only $A_5$.
\end{proof}


\section{Proof of Lemma \ref{lem:nonabelian}}\label{nonabelian}

Throughout this section,
let $\varSigma$ be a homology $6$-sphere with effective $G$-action.
In this section, we study the nonempty $G$-fixed-point set $\varSigma^{G}$ 
in the case when $G$ is isomorphic to
\[
A_5,\, A_6,\, A_7,\, PSL(2,7),\, PSU(4,2),\, {\rm or}\, A_5 \times A_5,
\]
and we prove Lemma \ref{lem:nonabelian}.

The alternating group $A_5$ on the five letters $1$, $2$, $3$, $4$, $5$
has the nine conjugacy classes of subgroups and we can choose those representatives
\[
E,\; C_2, \; C_3, \; D_4, \; C_5, \; D_6, \; D_{10}, \; A_4, \, {\rm and} \; A_5
\]
so that
\[
D_4 \triangleleft A_4, \; C_2 = D_4 \cap D_6, \,{\rm and} \; \langle D_4 , D_6 \rangle = A_5,
\]
e.g., $C_2 = \langle\, (2,3)(4,5)\,\rangle$, $D_4 = \langle\, (2,3)(4,5), \, (2,4)(3,5) \,\rangle$,
$D_{6} = \langle\, (1,2,3), \, (2,3)(4,5) \,\rangle$, and
$A_{4}$ is the alternating group on the four letters $2$, $3$, $4$, $5$.
There are five irreducible $\bR[A_5]$-modules 
\[
\bR_{A_5}, \; U_{3.1}, \; U_{3.2},\; U_{4},\,{\rm and}\;U_{5},
\]
up to isomorphism, where $\bR_{A_5}$ is the trivial one and 
$U_{i}$ (resp. $U_{j,k}$) indicates $\dim U_{i}=i$ (resp. $\dim U_{j.k}=j$).
The dimensions of the $H$-fixed-point sets of the nontrivial irreducible $\bR[A_5]$-modules
for the chosen nontrivial subgroups $H$ of $A_5$ are as in Table \ref{table:A5}.

\begin{table}[!ht]
\begin{center}
\begin{small}
\caption{The dimensions of $U_{*}^{H}$ (cf. \cite[Table1.1]{Mor3}).}
\label{table:A5}
\begin{tabular}{|c||c|c|c|c|c|c|c|c|c|}
\hline
{}          & $C_2$ & $C_3$ & $C_5$ & $D_4$ & $D_6$ & $D_{10}$ & $A_4$ & $A_5$  \\ \hline
$U_{3.1}$   &  $1$  &  $1$  &  $1$  & $0$ & $0$ & $0$ & $0$  & $0$  \\ \hline
$U_{3.2}$   &  $1$  &  $1$  &  $1$  & $0$ & $0$ & $0$ & $0$  & $0$ \\ \hline
$U_{4}$     &  $2$  &  $2$  &  $0$  & $1$ & $1$ & $0$ & $1$   & $0$ \\ \hline
$U_{5}$     &  $3$  &  $1$  &  $1$  & $2$ & $1$ & $1$ & $0$  & $0$  \\ \hline
\end{tabular}
\end{small}
\end{center}
\end{table}

\begin{prop}[{\rm cf. \cite[Proposition 1.1]{KwaSch}}]\label{prop:Z2hom3A5}
Let $\Xi$ be a $\bZ_2$-homology $3$-sphere with effective $G$-action.
Then $\chi(\Xi^{G})$ is a nonnegative integer.
If the $G$-action on $\Xi$ is an odd-Euler-characteristic $G$-action,
then $G$ is isomorphic to $A_5$ and 
$\Xi^{G}$ consists of exactly one point.
\end{prop}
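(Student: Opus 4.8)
The plan is to reduce the entire statement to the faithful tangential representation at a fixed point, and then to a finite case-check over the finite subgroups of $O(3)$. First I would dispose of the trivial cases: if $\Xi^{G}=\emptyset$ then $\chi(\Xi^{G})=0$, and if $G=E$ then $\Xi^{G}=\Xi$ and $\chi(\Xi^{G})=\chi(S^{3})=0$. So assume $G\ne E$ and fix $x\in\Xi^{G}$. A linearization argument shows that the tangential $G$-module $T_{x}(\Xi)$ is faithful: if $g\in G$ fixes $x$ with trivial differential there, the component of $\Xi^{\langle g\rangle}$ through $x$ has full dimension $\dim\Xi$, hence is open and closed, so $g$ acts trivially and $g=e$. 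Thus $G$ is isomorphic to a finite subgroup of $O(3)$, and at every $y\in\Xi^{G}$ we have $\dim_{y}\Xi^{G}=\dim T_{y}(\Xi)^{G}\le 2$; moreover $\dim T_{y}(\Xi)^{G}=2$ forces the complementary line to carry a faithful homomorphism $G\to O(1)$, i.e. $G\cong C_{2}$.

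For the nonnegativity I would argue by this local dimension. If $\dim_{y}\Xi^{G}=2$ for some $y$ then $G\cong C_{2}$, and by Lemma \ref{lem:smith} the set $\Xi^{C_{2}}$ is a connected $\bZ_{2}$-homology $2$-sphere, whose Euler characteristic, being the alternating sum of its $\bZ_{2}$-Betti numbers, equals $2$. Otherwise every component of $\Xi^{G}$ has dimension $\le 1$, so $\Xi^{G}$ is a disjoint union of isolated points (each contributing $1$) and circles (each contributing $0$), and therefore $\chi(\Xi^{G})\ge 0$. This settles the first assertion.

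Now suppose $\chi(\Xi^{G})$ is odd. Then $\Xi^{G}\ne\emptyset$, so $G$ embeds in $O(3)$ as above. Every finite subgroup of $O(3)$ is abstractly one of $C_{n},D_{2n},A_{4},S_{4},A_{5}$ or the product of one of these with $\langle -I\rangle\cong C_{2}$, and one checks directly that each of them except $A_{5}$ and $A_{5}\times C_{2}$ belongs to $\mathcal{G}_{2}^{2}$ (for instance $D_{2n}$ through its index-$2$ cyclic subgroup $C_{n}$, and $S_{4}$ through the chain $D_{4}\nsl A_{4}\nsl S_{4}$, with $D_{4}$ a $2$-group, $A_{4}/D_{4}\cong C_{3}$ cyclic, and $S_{4}/A_{4}\cong C_{2}$ a $2$-group). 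Since $\Xi$ is a $\bZ_{2}$-homology sphere, Lemma \ref{lem:euler} (with $p=2$) shows that membership in $\mathcal{G}_{2}^{2}$ forces $\chi(\Xi^{G})$ to be even; as $\chi(\Xi^{G})$ is odd we conclude $G\cong A_{5}$ or $G\cong A_{5}\times C_{2}$.

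It remains to treat these two groups by hand, using Lemma \ref{lem:smith} and Table \ref{table:A5}; this is the part I expect to be the main obstacle, as it requires propagating Smith theory along a chain of $2$-subgroups and correctly identifying the low-dimensional $\bZ_{2}$-homology spheres that arise. In both cases $A_{5}$ embeds in $SO(3)$ (it has no subgroup of index $2$), so $T_{x}(\Xi)$ restricts to $A_{5}$ as the irreducible module $U_{3.1}$ or $U_{3.2}$, whence $\Xi^{A_{5}}$ consists of isolated points and $\chi(\Xi^{A_{5}})=|\Xi^{A_{5}}|$. If $G\cong A_{5}\times C_{2}$, the central factor is realized by $\tau=-I$; then $\Xi^{\tau}$ is a $\bZ_{2}$-homology sphere with $\dim T_{x}(\Xi)^{\tau}=0$ at each $G$-fixed point, so $\Xi^{\tau}=S^{0}$ is two points, both necessarily fixed by $A_{5}$, giving $\chi(\Xi^{G})=2$ and contradicting oddness. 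If $G\cong A_{5}$, Table \ref{table:A5} gives $\dim T_{x}(\Xi)^{C_{2}}=1$ and $\dim T_{x}(\Xi)^{D_{4}}=0$, so $\Xi^{C_{2}}$ is a circle and $\Xi^{D_{4}}=(\Xi^{C_{2}})^{D_{4}/C_{2}}$ is the fixed set of an involution acting on that circle, namely $S^{0}$. Since $\Xi^{A_{5}}\subseteq\Xi^{D_{4}}$ we get $|\Xi^{A_{5}}|\le 2$, and oddness forces $|\Xi^{A_{5}}|=1$, as required.
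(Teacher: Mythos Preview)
Your proof is correct, but it takes a different route from the paper's. The paper first disposes of the case $-I\in G$ (which gives $\Xi^{G}\cong S^{0}$ via Lemma~\ref{lem:lowsphere}), then uses the determinant trick to reduce to $G\subset SO(3)$ and runs a direct case split: for $C_{n},D_{2n}$ with $n$ even, $A_{4}$, $S_{4}$ it invokes a nontrivial normal $2$-subgroup together with Lemma~\ref{lem:lowsphere}; for $C_{n},D_{2n}$ with $n$ odd it observes $\dim T_{x}(\Xi)^{F(G)}=1$ and applies Lemma~\ref{lem:onedim}; and for $A_{5}$ it uses $\Xi^{D_{4}}\cong S^{0}$ exactly as you do. Your argument is more uniform: the nonnegativity of $\chi(\Xi^{G})$ comes from a single local-dimension observation (components of $\Xi^{G}$ have dimension $\le 1$ unless $G\cong C_{2}$, in which case Smith gives a $\bZ_{2}$-homology $2$-sphere), and the odd-Euler case is reduced in one stroke to $A_{5}$ and $A_{5}\times C_{2}$ by checking that every other abstract finite subgroup of $O(3)$ lies in $\hen{G}^{2}_{2}$ and invoking Lemma~\ref{lem:euler}. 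What your approach buys is economy---you avoid Lemmas~\ref{lem:lowsphere} and~\ref{lem:onedim} entirely here---at the cost of importing the Oliver-type machinery (Lemma~\ref{lem:euler}) into a statement the paper proves by more elementary means. Your treatment of $A_{5}\times C_{2}$ via the central involution $\tau=-I$ is essentially the paper's opening ``$-I\in G$'' case, just postponed; note that the irreducibility of $\mathrm{Res}^{G}_{A_{5}}T_{x}(\Xi)$ (forcing the central $C_{2}$ to act by $-I$ via Schur) is the key point making this work, and you identify it correctly.
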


\begin{proof}
It suffices to prove in the case $\Xi^{G} \neq \emptyset$.
We may assume that $G$ is a finite subgroup of $O(3)$.
Let $I$ denote the unit matrix in $O(3)$.
If $G$ contains $-I$,
then $\langle -I \rangle$ is a normal subgroup of $G$ of order $2$.
By $\dim T_{x}(\Xi)^{\langle -I \rangle} =0$,
we have $\Xi^{G} \cong S^{0}$ by Lemma \ref{lem:lowsphere}.
Suppose that $-I \not\in G$ and let $\rho : G \to O(3)$ be a monomorphism.
Since the homomorphism $\det \rho \cdot \rho : G \to O(3)$ defined by
\[
\det \rho \cdot \rho (g) := \det \rho(g) \cdot \rho(g)
\]
is injective and its image is contained in $SO(3)$,
$G$ is isomorphic to a finite subgroup of $SO(3)$.
If $G$ is isomorphic to $C_{n}$, $D_{2n}$, $A_4$, or $S_4$,
where $n$ is even,
then $G$ has a nontrivial normal $2$-subgroup $P_2$.
Therefore, by Lemma \ref{lem:lowsphere},
we have $\Xi^{G} \cong S^{k}$, where $0 \leq k \leq 2$.
If $G$ is isomorphic to $C_n$ or $D_{2n}$, where $n$ is odd,
then it holds that $\dim T_{x}(\Xi)^{F(G)}=1$ 
for each $x \in \Xi^{G}$.
It follows from Lemma \ref{lem:onedim} that
$\chi(\Xi^{G})$ is a nonnegative even integer.
We finally consider the case $G \cong A_5$.
Then $T_{x}(\Xi)$ is isomorphic to 
a $3$-dimensional irreducible $\bR[A_5]$-module.
By Table \ref{table:A5} and Lemma \ref{lem:smith},
we have $\Xi^{G} \subset \Xi^{H} \cong S^0$,
where $H \cong D_4$.
Hence, it follows that $|\Xi^{G}| = 1$ or $2$.
\end{proof}

The next proposition and its corollary will help proofs of 
Proposition \ref{prop:hom6A5} and Lemma \ref{lem:A5}.

\begin{prop}[{\rm \cite[Proposition 2.4]{Tam}}]\label{prop:isomorphic}
Let $\Xi$ be a homology sphere with $G$-action.
Suppose that $G$ satisfies the following two conditions.
\begin{itemize}
  \item[(1)] Each element of $G$ is of prime-power order.
  \item[(2)] If $g \in G$ is of $2$-power order,
                 then the order of $g$ is less than or equal to $4$,
                 cf. the $8$-condition in \cite{PawSol}.
\end{itemize}
If $\Xi^{G} \neq \emptyset$ then for any $x$, $y \in \Xi^{G}$,
$T_{x}(\Xi)$ is isomorphic to $T_{y}(\Xi)$ as $\bR[G]$-modules.
\end{prop}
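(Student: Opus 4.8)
The plan is to prove the isomorphism at the level of characters and then to verify the resulting identity one prime at a time. Since $\bR[G]$-modules are semisimple (characteristic $0$), two of them are isomorphic if and only if their characters agree on every element of $G$; thus it suffices to show $\mathrm{tr}(g \mid T_{x}(\Xi)) = \mathrm{tr}(g \mid T_{y}(\Xi))$ for each $g \in G$. By hypothesis (1) every such $g$ has prime-power order, so this trace depends only on the restriction $T_{x}(\Xi)|_{P}$ to the cyclic group $P = \langle g \rangle \cong C_{p^{a}}$, and it is enough to treat one element $g$ at a time. In fact I would aim for the stronger conclusion $T_{x}(\Xi)|_{P} \cong T_{y}(\Xi)|_{P}$, which implies the trace equality.

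Next I would cut down to the case of two isolated fixed points. The assignment $x \mapsto [\,T_{x}(\Xi)|_{P}\,]$ is locally constant on $\Xi^{P}$, since the tangent bundle along a fixed submanifold is an equivariant bundle over it. As $\Xi$ is in particular a $\bZ_{p}$-homology sphere, Smith's theorem (Lemma \ref{lem:smith}) shows $\Xi^{P}$ is again a $\bZ_{p}$-homology sphere; being nonempty it is connected whenever $\dim \Xi^{P} \geq 1$. Hence if $\dim \Xi^{P} \geq 1$ then $x$ and $y$ lie in one component and $T_{x}(\Xi)|_{P} \cong T_{y}(\Xi)|_{P}$ at once. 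The only surviving case is $\Xi^{P} = S^{0} = \{x,y\}$, i.e. $x$ and $y$ are the two isolated fixed points of the cyclic prime-power group $P$.

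The hard part will be this two-point case, and it is where both hypotheses enter. Here I would apply an index-theoretic fixed-point formula --- the Atiyah--Bott Lefschetz formula for the signature operator, i.e. the $G$-signature theorem --- to $g$ and its powers acting on the closed oriented manifold $\Xi$. Because $\Xi$ has the rational homology of a sphere, the global (equivariant signature) side of the formula vanishes, so the two local contributions at $x$ and $y$ must cancel; each contribution is, up to orientation signs, a product over the rotation numbers $a_{k}$ of factors $(\zeta^{a_{k}}+1)/(\zeta^{a_{k}}-1)$ with $\zeta = e^{2\pi i/p^{a}}$. The decisive step is number-theoretic: since $p^{a}$ is a prime power the denominators $\zeta^{a_{k}}-1$ are all associate to $1-\zeta$ up to units of $\bZ[\zeta_{p^{a}}]$, and an integrality-and-units argument in this cyclotomic ring forces the rotation-number multisets at $x$ and $y$ to coincide, so that $T_{x}(\Xi)|_{P} \cong T_{y}(\Xi)|_{P}$. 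This is the classical rigidity of prime-power-order actions, and it is exactly here that the hypotheses are indispensable. Condition (1) is what confines the whole comparison to prime-power cyclic groups, where this rigidity holds; at an element of non-prime-power order it genuinely fails, as the known Smith-inequivalence examples show, which is why trace agreement on prime-power elements would not suffice without it. Condition (2) restricts the $2$-power orders to $\{1,2,4\}$ --- the $8$-condition of \cite{PawSol} --- which is precisely the range in which the cyclotomic argument survives at the prime $2$; indeed for $C_{2}$ and $C_{4}$ the faithful real irreducible is unique, so in those cases $T_{x}(\Xi)|_{P}$ is already pinned down by the dimensions $\dim \Xi^{P'}$ of the fixed sets ($P' \leq P$), which are point-independent by the connectivity used above. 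Combining the three reductions with the two-point rigidity yields the trace identity for every $g$, hence $T_{x}(\Xi) \cong T_{y}(\Xi)$; the only substantial input is the cyclotomic rigidity in the two-point case, everything else being formal.
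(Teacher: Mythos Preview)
The paper does not give its own proof of this proposition; it is quoted from \cite[Proposition~2.4]{Tam} and used as a black box, so there is no in-paper argument to compare against.  Your sketch is the standard route to this kind of result and is correct in outline: reduce via characters to each cyclic $\langle g\rangle$, which by hypothesis~(1) has prime-power order; use Lemma~\ref{lem:smith} to see that $\Xi^{\langle g\rangle}$ is a $\bZ_p$-homology sphere, hence connected when positive-dimensional, which settles that case by local constancy; and in the residual case $\Xi^{\langle g\rangle}=\{x,y\}$ invoke Atiyah--Bott rigidity for odd prime powers and the elementary dimension count for $C_2$ and $C_4$.

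Two small points deserve to be made explicit.  First, the $G$-signature argument in the odd-prime two-point case needs $\Xi$ even-dimensional and the cyclic action orientation-preserving; both are automatic here, since every nontrivial real irreducible of $C_{p^{a}}$ ($p$ odd) is $2$-dimensional and groups of odd order preserve orientation, but you should say so.  Second, your phrase that ``the cyclotomic argument survives at the prime $2$'' for orders $\le 4$ is a bit misleading: the signature-operator computation is not what carries the $p=2$ case (for $C_2$ the local contributions even vanish).  What actually works --- and what you correctly state in the next clause --- is that a real $C_2$- or $C_4$-module is completely determined by the numbers $\dim V^{P'}$ for $P'\le P$, and those numbers are point-independent by applying the same Smith-connectivity step to $\Xi^{P'}$.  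This is exactly where hypothesis~(2) is used: for $C_8$ there are two distinct faithful $2$-dimensional real irreducibles with identical fixed-set dimensions, so the dimension count no longer determines the module.  With these clarifications your argument is complete.
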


Note that the alternating group $A_5$ satisfies the two conditions
stated in Proposition \ref{prop:isomorphic}.

The next corollary follows immediately from Proposition \ref{prop:isomorphic}.

\begin{cor}\label{cor:pseudofree}
In the situation of Proposition \ref{prop:isomorphic},
if there exists a $G$-fixed point $x$ of $\Xi$ such that 
the tangential $G$-module $T_{x}(\Xi)$ is properly $m$-pseudofree,
then the entire $G$-action on $\Xi$ is also properly $m$-pseudofree.
\end{cor}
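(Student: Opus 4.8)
The plan is to reduce the pseudofreeness of the action on $\Xi$ to the purely linear-algebraic pseudofreeness of a single tangential module, using the standard identification of the tangent space of a fixed-point submanifold together with Smith theory. I would fix the $G$-fixed point $x \in \Xi^{G}$ for which $V := T_{x}(\Xi)$ is properly $m$-pseudofree, and record the two facts that drive the argument. First, for a smooth action and any subgroup $H \leq G$, the fixed set $\Xi^{H}$ is a smooth submanifold and $T_{x}(\Xi^{H}) = (T_{x}\Xi)^{H}$ whenever $x \in \Xi^{H}$; consequently the connected component of $\Xi^{H}$ through $x$ has dimension exactly $\dim (T_{x}\Xi)^{H}$. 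Second, by Proposition \ref{prop:isomorphic} every tangential module $T_{y}(\Xi)$ with $y \in \Xi^{G}$ is isomorphic to $V$, so the number $\dim (T_{y}\Xi)^{H} = \dim V^{H}$ is independent of the chosen fixed point; this makes the hypothesis ``$T_{x}(\Xi)$ is properly $m$-pseudofree'' an intrinsic feature of the action rather than of one particular $x$.

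With these in hand the ``properly'' part is immediate: choosing a subgroup $H$ with $\dim V^{H} = m$, which exists because $V$ is properly $m$-pseudofree, the component of $\Xi^{H}$ through $x$ has dimension $\dim (T_{x}\Xi)^{H} = \dim V^{H} = m$. For the $m$-pseudofree bound I would first reduce to subgroups of prime order. Given a nontrivial $H \leq G$, I pick a prime $p$ dividing $|H|$ and an element of order $p$ generating a subgroup $H' \leq H$; then $\Xi^{H} \subseteq \Xi^{H'}$, so it suffices to bound the dimensions of the components of $\Xi^{H'}$. Since $H'$ is a $p$-group and $\Xi$ is an integral homology sphere, hence a $\bZ_{p}$-homology sphere, Lemma \ref{lem:smith} shows that $\Xi^{H'}$ is itself a $\bZ_{p}$-homology sphere; in particular it is connected whenever its dimension is at least $1$, and it contains $x$ because $H' \leq G$. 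Therefore $\dim \Xi^{H'} = \dim (T_{x}\Xi)^{H'} = \dim V^{H'} \leq m$, and every component of $\Xi^{H} \subseteq \Xi^{H'}$ has dimension at most $m$, as required.

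I expect the main obstacle to be exactly that the $m$-pseudofree condition must be verified on every connected component of $\Xi^{H}$, including those meeting no $G$-fixed point and hence carrying no direct tangential information. The device that removes this difficulty is the combination of passing to a prime-order subgroup $H'$ and invoking Smith's theorem: the resulting fixed set is a \emph{connected} $\bZ_{p}$-homology sphere containing the anchor point $x$, so the tangential computation at $x$ controls the dimension of the whole of $\Xi^{H'}$, and a fortiori of every component of the possibly smaller set $\Xi^{H}$. Proposition \ref{prop:isomorphic} then guarantees that this control is uniform over $\Xi^{G}$, so the conclusion does not depend on the particular fixed point singled out in the hypothesis.
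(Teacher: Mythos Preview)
Your proof is correct, but it takes a different route than the paper's. The paper's argument is a single line: it observes that every subgroup $H \leq G$ again satisfies the two conditions of Proposition~\ref{prop:isomorphic}, so that proposition applies to the $H$-action on $\Xi$; since $x \in \Xi^{G} \subset \Xi^{H}$, one gets $T_{y}(\Xi) \cong T_{x}(\Xi)$ as $\bR[H]$-modules for every $y \in \Xi^{H}$, and hence every component of $\Xi^{H}$ has dimension $\dim V^{H} \leq m$. You instead bypass the second application of Proposition~\ref{prop:isomorphic} entirely by reducing to prime-order subgroups and invoking Smith theory (Lemma~\ref{lem:smith}) to force $\Xi^{H'}$ to be a $\bZ_{p}$-homology sphere anchored at $x$. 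This has the pleasant side effect that your bound $\dim C \leq m$ does not actually use the special hypotheses on $G$ at all, so your argument proves a slightly more general statement; on the other hand, the paper's proof is more in keeping with its narrative, since Proposition~\ref{prop:isomorphic} has just been stated precisely so that it can be applied to all relevant subgroups. One small point worth tightening in your write-up: when you say ``it is connected whenever its dimension is at least $1$'', you are implicitly using that a $\bZ_{p}$-homology sphere which is a closed manifold cannot have components of different dimensions, so that the case split ``$\dim \geq 1$ vs.\ $\dim = 0$'' is well-posed; this is true, but it is the step that actually rules out stray components away from $x$, so it deserves a word.
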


\begin{proof}
The assertion follows 
since all subgroups of $G$ also satisfy the two conditions
in Proposition \ref{prop:isomorphic}.
\end{proof}

\begin{prop}\label{prop:hom6A5}
If $G = A_5$ and $\varSi^{G} \neq \emptyset$, 
then $\varSigma^{G}$ satisfies at least one of the following:
\begin{itemize}
  \item[(1)] $\varSi^{G}$ is a $\bZ_2$-homology sphere of dimension $\leq 3$,
  \item[(2)] $\varSi^{G}$ is a disjoint union of two or more circles, and
  \item[(3)] $\varSi^{G}$ consists of exactly one point.
\end{itemize}
\end{prop}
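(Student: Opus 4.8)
The plan is to read off the local structure of the action at a $G$-fixed point and then propagate it to the whole fixed-point set. Fix $x \in \varSigma^{G}$. The tangential $A_5$-module $T_{x}(\varSigma)$ is a $6$-dimensional $\bR[A_5]$-module, and since $A_5$ satisfies the two conditions of Proposition \ref{prop:isomorphic}, the modules $T_{y}(\varSigma)$ for $y \in \varSigma^{G}$ are all isomorphic to $T_{x}(\varSigma)$; in particular $\varSigma^{G}$ is a closed submanifold of pure dimension $d = \dim T_{x}(\varSigma)^{A_5}$. Decomposing $T_{x}(\varSigma)$ into the irreducibles $\bR_{A_5}, U_{3.1}, U_{3.2}, U_{4}, U_{5}$ and discarding the trivial module $6\bR_{A_5}$ (excluded by effectiveness, since it would force $A_5$ to fix an open subset of the connected $\varSigma$), the only possibilities are $U_{3.i}\oplus U_{3.j}$, $\bR_{A_5}\oplus U_{5}$, $2\bR_{A_5}\oplus U_{4}$, and $3\bR_{A_5}\oplus U_{3.i}$, giving $d = 0,1,2,3$ respectively. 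I will treat the four values of $d$ separately, reading the fixed-dimensions $\dim T_{x}(\varSigma)^{H}$ off Table \ref{table:A5}.

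When $d = 1$, the set $\varSigma^{G}$ is a closed $1$-manifold, hence a disjoint union of circles; a single circle is a $\bZ_2$-homology $1$-sphere, giving (1), while two or more circles give (2). When $d = 0$, I use the $2$-group $D_{4}$, for which $\dim T_{x}(\varSigma)^{D_4} = 0$; by Smith's theorem (Lemma \ref{lem:smith}) $\varSigma^{D_4}$ is a $\bZ_2$-homology $0$-sphere, i.e. exactly two points. Since $\varSigma^{G} \subseteq \varSigma^{D_4}$ is a nonempty closed $0$-manifold, it consists of either one point (giving (3)) or two points, in which case $\varSigma^{G} = \varSigma^{D_4} \cong S^{0}$ is a $\bZ_2$-homology sphere (giving (1)).

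The cases $d = 2$ and $d = 3$ are the heart of the argument, and both run on the same dimension-coincidence principle: choose a $p$-subgroup $P$ whose fixed-dimension already equals $d$, so that $\varSigma^{G}$, being a pure $d$-dimensional closed submanifold of the connected $d$-manifold $\varSigma^{P}$, is open by invariance of domain and also closed, hence all of $\varSigma^{P}$. For $d = 3$ I take $P = D_{4}$, where $\dim T_{x}(\varSigma)^{D_4} = 3$; then $\varSigma^{D_4}$ is a $\bZ_2$-homology $3$-sphere, in particular connected, and the principle gives $\varSigma^{G} = \varSigma^{D_4}$, as in (1). For $d = 2$ I take the $5$-group $P = C_{5}$, where $\dim T_{x}(\varSigma)^{C_5} = 2$; then $\varSigma^{C_5}$ is a $\bZ_5$-homology $2$-sphere, hence a closed surface of Euler characteristic $2$, namely $S^{2}$, and in particular connected, whence $\varSigma^{G} = \varSigma^{C_5} \cong S^{2}$, again as in (1).

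The main obstacle is exactly this last pair of cases: since $A_5$ is simple there is no normal tower along which to push Smith's theorem, so one cannot argue directly that $\varSigma^{A_5}$ is a homology sphere. The point I would stress is that a $p$-subgroup $P$ with $\dim T_{x}(\varSigma)^{P} = \dim T_{x}(\varSigma)^{A_5}$ exists precisely when $d \neq 1$ (for $d=1$ no $p$-subgroup realizes the fixed-dimension $1$, which is why genuinely several circles, and hence alternative (2), can occur), and that for such a $P$ the connectedness of the Smith homology sphere $\varSigma^{P}$ upgrades the inclusion $\varSigma^{A_5} \subseteq \varSigma^{P}$ to an equality. The only remaining elementary inputs are the classification of closed surfaces by Euler characteristic and the fact that a $0$-dimensional $\bZ_2$-homology sphere has exactly two points.
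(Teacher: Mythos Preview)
Your proof is correct and follows essentially the same approach as the paper's: both list the four possible tangential $A_5$-modules, and for $d=2,3$ both use the dimension coincidence $\dim T_x(\varSigma)^{P}=\dim T_x(\varSigma)^{A_5}$ with $P=C_5$ or $D_4$ together with connectedness of the Smith homology sphere $\varSigma^{P}$ to force $\varSigma^{A_5}=\varSigma^{P}$. Your write-up is slightly more explicit in justifying why a $\bZ_5$-homology $2$-sphere is actually $S^2$ and in phrasing the coincidence step via invariance of domain, but the argument is the same.
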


The author does not know whether 
there exists an $A_5$-action on a homology $6$-sphere
such that the $A_5$-fixed-point set 
is a disjoint union of two or more circles.

\begin{proof}
By Table \ref{table:A5} and Proposition \ref{prop:isomorphic},
all the tangential $G$-modules $T_{x}(\varSi)$ 
on $G$-fixed points of $\varSigma$ 
are isomorphic to one of
\[
\bR_{A_5}^{\oplus 3} \oplus U_{3.i},
\; \bR_{A_5}^{\oplus 2} \oplus U_{4},
\; U_{3.i} \oplus U_{3.j},\,
{\rm and}\;\bR_{A_5} \oplus U_{5}\;(i, j = 1 \,{\rm or}\,2).
\]
If $T_{x}(\varSi)$ is isomorphic to 
$\bR_{A_5}^{\oplus 3} \oplus U_{3.i}$ 
(resp. $\bR_{A_5}^{\oplus 2} \oplus U_{4}$),
then $\varSi^{D_4}$ (resp. $\varSi^{C_5}$)
is a $\bZ_2$-homology $3$-sphere (resp. a sphere $S^2$) by Lemma \ref{lem:smith}.
Now, $\varSigma^{D_4}$ (resp. $\varSigma^{C_5}$) 
is a connected closed manifold and it holds that
$\dim T_{x}(\varSi)^{G} = \dim T_{x}(\varSi)^{D_4}$
(resp. $\dim T_{x}(\varSi)^{G} = \dim T_{x}(\varSi)^{C_5}$).
This implies that
$\varSigma^{G}$ coincides with $\varSigma^{D_4}$ (resp. $\varSigma^{C_5}$).
If $T_{x}(\varSi)$ is isomorphic to $U_{3.i} \oplus U_{3.j}$,
then we have $ \varSi^{G} \subset \varSi^{D_4} \cong S^0$ by Lemma \ref{lem:smith};
hence $|\varSi^{G}|=1$ or $2$.
If $T_{x}(\varSi)$ is isomorphic to $\bR_{A_5}  \oplus U_{5}$ 
(for every $x \in \varSi^{G}$),
it is easy to see that $\varSi^{G}$ is a disjoint union of some circles.
\end{proof}

\begin{prop}\label{prop:A5A5}
If $G = A_5 \times A_5$ and $\varSigma^{G} \neq \emptyset$,
then $\varSigma^{G}$ consists of exactly two points.
\end{prop}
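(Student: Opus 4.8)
The plan is to determine the tangential module at a $G$-fixed point, bound $|\varSigma^{G}|$ from above by Smith theory, and then show that the two poles cut out by a Klein-four subgroup of each factor are both globally fixed. Throughout write $G = A_5 \times A_5$ with factors $G_1 = A_5 \times E$ and $G_2 = E \times A_5$. First I would record that the tangential $G$-module $T_{x}(\varSigma)$ at any $x \in \varSigma^{G}$ is faithful: if $g$ acted trivially on $T_{x}(\varSigma)$ it would fix a neighbourhood of $x$ pointwise, hence (as $\varSigma$ is connected and $\varSigma^{\langle g \rangle}$ is a closed submanifold) all of $\varSigma$, contradicting effectiveness. Then I would classify the faithful $6$-dimensional real $A_5 \times A_5$-modules. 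Every real irreducible of $G$ is an external tensor product of real irreducibles of the two factors (all irreducible $\bR[A_5]$-modules are of real type), so its dimension is a product of two elements of $\{1,3,3,4,5\}$; the only nontrivial ones of dimension $\leq 6$ are the four $3$-dimensional modules, each factoring through a single projection $G \to A_5$. A faithful module must be nontrivial on both $G_1$ and $G_2$, and a dimension count forces $T_{x}(\varSigma) \cong U_{3.i} \oplus U_{3.j}$, with $G_1$ acting via the icosahedral module $U_{3.i}$ on the first summand and $G_2$ via $U_{3.j}$ on the second (both summands lying in $SO(3)$, consistent with the orientation-preserving hypothesis).

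For the upper bound, set $P = D_4 \times D_4$, a $2$-group. By Table~\ref{table:A5}, $\dim U_{3.i}^{D_4} = 0$, so $\dim T_{x}(\varSigma)^{P} = 0$; by Smith's theorem (Lemma~\ref{lem:smith}) $\varSigma^{P}$ is a $\bZ_2$-homology sphere of dimension $0$, i.e. $\varSigma^{P} = \{a,b\} \cong S^{0}$. Since $P \leq G$ we get $\varSigma^{G} \subseteq \varSigma^{P}$, whence $|\varSigma^{G}| \leq 2$; as $\varSigma^{G} \neq \emptyset$, at least one pole, say $a$, is $G$-fixed.

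The plan for the lower bound is to prove that the second pole $b$ is $G$-fixed as well. I would first note that $N_{G}(P) = A_4 \times A_4$ acts on the two-point set $\varSigma^{P} = \{a,b\}$, giving a homomorphism $A_4 \times A_4 \to \mathrm{Sym}\{a,b\} \cong C_2$; since the abelianization of $A_4 \times A_4$ is $C_3 \times C_3$, this map is trivial, so $A_4 \times A_4$ fixes both $a$ and $b$. Combined with $\varSigma^{A_4 \times A_4} \subseteq \varSigma^{P}$ this gives $\varSigma^{A_4 \times A_4} = \{a,b\}$. Hence the stabilizer of $b$ contains $A_4 \times A_4$, so it is one of $A_4 \times A_4$, $A_4 \times A_5$, $A_5 \times A_4$, $A_5 \times A_5$, and it remains to exclude the three proper possibilities. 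To do so I would restrict to $\Xi := \varSigma^{D_4 \times E}$, which by Lemma~\ref{lem:smith} is an (orientable) $\bZ_2$-homology $3$-sphere on which $G_2 = A_5$ acts effectively and orientation-preservingly with tangential module $U_{3.j}$ at $a$; here $\Xi^{G_2} = \varSigma^{D_4 \times A_5}$ lies between $a$ and $\Xi^{E \times D_4} = \varSigma^{P} = \{a,b\}$, and the claim becomes $\Xi^{G_2} = \{a,b\}$.

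The genuinely hard step is this last equality, equivalently that the effective $A_5$-action on the $3$-sphere $\Xi$ does not degenerate into a one-fixed-point action (with $b$ replaced by a free $A_5/A_4$-orbit of five points carrying $A_4$-isotropy). This is where Smith theory and Euler-characteristic bookkeeping give out: in odd dimensions every fixed set of a cyclic subgroup has Euler characteristic $0$, and moreover the restrictions of $T_{a}(\varSigma)$ and $T_{b}(\varSigma)$ to $P$ are isomorphic (both restrict on each $D_4$ to the sum of the three nontrivial characters), so the $2$-group data cannot separate the two poles. I expect to close this gap either by upgrading $\Xi$ to a genuine homology $3$-sphere and invoking the rigidity of orientation-preserving $A_5$-actions on homology $3$-spheres, whose fixed-point behaviour matches the linear model $U_{3.j} \oplus \bR$ on $S^{3}$ and so has fixed set exactly $S^{0}$; or by playing the two factors against one another, using that the symmetric argument on $\varSigma^{E \times D_4}$ together with $\varSigma^{A_4 \times A_4} = \{a,b\}$ forces the stabilizers of $a$ and $b$ to coincide. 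Either route yields $b \in \varSigma^{G}$, and therefore $|\varSigma^{G}| = 2$.
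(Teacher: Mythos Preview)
Your determination of the tangential module $T_x(\varSigma) \cong U_{3.i} \oplus U_{3.j}$ and the upper bound $|\varSigma^{G}| \leq |\varSigma^{D_4 \times D_4}| = 2$ are correct and agree with the paper. The gap you flag, however, is real, and neither of your proposed routes closes it: $\Xi = \varSigma^{D_4 \times E}$ is only a $\bZ_2$-homology $3$-sphere (Smith theory for $2$-groups gives no more), and Proposition~\ref{prop:Z2hom3A5} does \emph{not} exclude one-fixed-point $A_5$-actions on $\bZ_2$-homology $3$-spheres---it merely says that \emph{if} the fixed set has odd Euler characteristic then it is a single point. Your ``symmetric'' suggestion reproduces the same difficulty for the other factor and does not force the stabilizers of $a$ and $b$ to coincide.

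The paper bypasses this entirely by staying in the ambient $6$-sphere and using a $\bZ_2$-intersection-number argument (the same device as in Case~$\rm({I}\hspace{-1.2pt}\mathrm{V})$ of Lemma~\ref{lem:S0}). Since $\dim T_x(\varSigma)^{D_4 \times E} = \dim T_x(\varSigma)^{A_5 \times E} = 3$ and $\varSigma^{D_4 \times E}$ is a connected $\bZ_2$-homology $3$-sphere, one has $\varSigma^{A_5 \times E} = \varSigma^{D_4 \times E}$; likewise $\varSigma^{E \times A_5} = \varSigma^{E \times D_4}$. These two closed $3$-dimensional submanifolds of $\varSigma$ meet precisely in $\varSigma^{G}$, and transversely there because $T_x(\varSigma)^{A_5 \times E}$ and $T_x(\varSigma)^{E \times A_5}$ are the two complementary $3$-dimensional summands of $T_x(\varSigma)$. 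As $H_3(\varSigma;\bZ_2)=0$, their $\bZ_2$-intersection number vanishes, so $|\varSigma^{G}|$ is even; combined with $1 \le |\varSigma^{G}| \le 2$ this gives $|\varSigma^{G}| = 2$. No analysis of the induced $A_5$-action on a $3$-dimensional fixed set is needed.
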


\begin{proof}
To avoid confusion,
we will denote the first $A_5$ by $H$ and the second $A_5$ by $K$,
i.e. $G = H \times K$.
By Table \ref{table:A5},
the tangential $G$-module $T_{x}(\varSi)$ at $x \in \varSigma^{G}$ 
is isomorphic to
\[
U \otimes \bR_{K} \oplus \bR_{H} \otimes V,
\]
where $\bR_{H}$ (resp. $\bR_{K}$) is 
the $1$-dimensional $\bR[A_5]$-module
and $U$ (resp. $V$) is a $3$-dimensional irreducible $\bR[A_5]$-module.
Let $H_{1}$ (resp. $K_{1}$) denote a subgroup of $H$ (resp. $K$) isomorphic to $D_4$
and let $E_{1}$ (resp. $E_2$) denote the trivial subgroup of $H$ (resp. $K$).
Observing Table \ref{table:A5} again, we have:
\begin{itemize}
  \item[(1)] $\dim T_{x}(\varSi)^{H_1 \times E_2} = \dim T_{x}(\varSi)^{H \times E_2} = 3$,
  \item[(2)] $\dim T_{x}(\varSi)^{E_1 \times K_1} = \dim T_{x}(\varSi)^{E_1 \times K} = 3$, and
  \item[(3)] $\dim T_{x}(\varSi)^{H_{1} \times K_{1}} =0$.
\end{itemize}
By Lemma \ref{lem:smith},
we have $1 \leq |\varSi^{G}| \leq |\varSi^{H_{1} \times K_{1}}|=2$ and the closed submanifolds
$\varSi^{H \times E_2}$ and $\varSi^{E_1 \times K}$ of $\varSi$ 
are $\bZ_2$-homology $3$-spheres which intersect in exactly one or two $G$-fixed points, transversely.
Since the $\bZ_2$-intersection number of 
$\varSi^{H \times E_2}$ and $\varSi^{E_1 \times K}$ on $\varSi$ is trivial,
we obtain $|\varSi^{G}|=2$.
\end{proof}

Let $\hana{C}_{2}$ denote a subgroup of $S_5$ of order $2$ 
not contained in $A_5$.
The symmetric group $S_5$ has 
one $6$-dimensional irreducible $\bR[S_5]$-module $V_{6}$ (up to isomorphism),
and $V_{6}$ is the unique irreducible $\bR[S_5]$-module
whose restriction to $A_5$ contains $U_{3.1}$ or $U_{3.2}$.
In fact,  $V_{6}$ is induced from $U_{3.1}$,
i.e. $V_{6} \cong \bR[S_5] \otimes_{\bR[A_5]} U_{3.1}$,
and the restriction ${\rm Res}^{S_5}_{A_5}V_{6}$ is isomorphic to $U_{3.1} \oplus U_{3.2}$.
Therefore,  we have $\dim V_{6}^{\hana{C}_2}=3$.

\begin{lem}\label{lem:A5}
Suppose that $G$ contains a normal subgroup $H$ isomorphic to $A_5$ 
and $\varSigma^{G} \neq \emptyset$.
If the $G$-action on $\varSigma$ is 
an orientation-preserving effective 
odd-Euler-characteristic $G$-action, 
then it is a $2$-pseudofree one-fixed-point $A_5$-action.
\end{lem}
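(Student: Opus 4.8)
The plan is to fix a point $x \in \varSigma^{G}$, set $T = T_{x}(\varSigma)$, and exploit that $T$ is a $6$-dimensional faithful orientation-preserving $\bR[G]$-module, so that $G$ embeds in $SO(6)$ and the normal subgroup $H \cong A_5$ acts faithfully on $T$ (if $H$ acted trivially on $T$, then $\varSigma^{H}$ would contain a neighbourhood of $x$, hence equal $\varSigma$, contradicting effectiveness). Since $A_5$ satisfies the hypotheses of Proposition \ref{prop:isomorphic}, the $\bR[A_5]$-module $T|_{H}$ is the same at every point of $\varSigma^{H} \supseteq \varSigma^{G}$, and by the proof of Proposition \ref{prop:hom6A5} it is one of the four faithful $6$-dimensional types
\[
\bR_{A_5}^{\oplus 3} \oplus U_{3.i}, \quad \bR_{A_5}^{\oplus 2} \oplus U_{4}, \quad U_{3.i} \oplus U_{3.j}, \quad \bR_{A_5} \oplus U_{5}.
\]
I would show that the odd-Euler-characteristic hypothesis forces the third type, and that in that situation $G \cong A_5$, $|\varSigma^{G}| = 1$, and the action is properly $2$-pseudofree; the other three types will each be shown to give $\chi(\varSigma^{G}) \equiv 0 \bmod 2$.

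The easy exclusions are the fourth and second types. For $\bR_{A_5} \oplus U_{5}$ one has $\dim T^{H} = 1$ at every point of $\varSigma^{G}$, so Lemma \ref{lem:onedim}, applied to the normal subgroup $H$, shows $\chi(\varSigma^{G})$ is a nonnegative even integer. For $\bR_{A_5}^{\oplus 2} \oplus U_{4}$, the set $\varSigma^{H}$ is a $\bZ_2$-homology $2$-sphere, and if $H_{*} \supseteq H$ denotes the kernel of the $G$-action on $T^{H} \cong \bR^{2}$, then the induced action on $\varSigma^{H}$ is the effective action of $G/H_{*}$, with $\varSigma^{G} = (\varSigma^{H})^{G/H_{*}}$. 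By Proposition \ref{prop:SOV}, $G/H_{*}$ embeds in $O(2)$, so it is cyclic or dihedral and hence lies in $\mathcal{G}_{1}^{2}$, and Lemma \ref{lem:euler} gives $\chi(\varSigma^{G}) \equiv 0 \bmod 2$.

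The first type $\bR_{A_5}^{\oplus 3} \oplus U_{3.i}$ is the first delicate case: here $\varSigma^{H}$ is a $\bZ_2$-homology $3$-sphere carrying the effective $G/H_{*}$-action, with $\varSigma^{G} = (\varSigma^{H})^{G/H_{*}}$. If $\chi(\varSigma^{G})$ is odd, Proposition \ref{prop:Z2hom3A5} forces $G/H_{*} \cong A_5$. The key is then the orientation hypothesis: since $A_5$ is perfect, $G$ acts on $T^{H} \cong \bR^{3}$ with trivial determinant, so by orientation-preservation it acts on $U_{3.i}$ with trivial determinant as well, and the faithful action on $T = T^{H} \oplus U_{3.i}$ embeds $G$ into $SO(3) \times SO(3)$ with both coordinate projections isomorphic to $A_5$. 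An order count together with Goursat's lemma (Lemma \ref{lem:Goursat}) then yields $G \cong A_5 \times A_5$, whence Proposition \ref{prop:A5A5} gives $|\varSigma^{G}| = 2$, contradicting oddness. (If instead $G/H_{*} \not\cong A_5$, then Proposition \ref{prop:Z2hom3A5} already makes $\chi(\varSigma^{G})$ even.)

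Finally, in the surviving type $U_{3.i} \oplus U_{3.j}$ one has $T^{H} = 0$, so $\varSigma^{H}$ is a finite set of $1$ or $2$ points and $\varSigma^{G} \subseteq \varSigma^{H}$. I would first pin down $C_{G}(H)$: computing $\mathrm{End}_{\bR[A_5]}(T)$ and intersecting its group of isometries with $SO(6)$ shows $C_{G}(H)$ is cyclic (contained in $\{\pm I\}$ when $i \neq j$, in $SO(2)$ when $i = j$). If $C_{G}(H) \neq E$, its unique prime-order subgroup is characteristic, hence normal in $G$, and acts on $T$ with zero fixed space; Lemma \ref{lem:lowsphere} then forces $\varSigma^{G}$ to be a sphere of dimension $\leq 2$, which combined with finiteness makes it $S^{0}$, so $\chi(\varSigma^{G}) = 2$ is again even. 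Thus $C_{G}(H) = E$, so $G \hookrightarrow \mathrm{Aut}(A_5) = S_5$ with image containing $\mathrm{Inn}(A_5) \cong A_5$, giving $G \cong A_5$ or $S_5$. The case $G \cong S_5$ is excluded by orientation: then $T \cong V_{6}$, and a transposition interchanges the two $3$-dimensional summands, acting as an involution with determinant $(-1)^{3} = -1$, contrary to $G \leq SO(6)$. Hence $G \cong A_5$; oddness of $\chi(\varSigma^{G})$ rules out $|\varSigma^{G}| = 2$ and leaves exactly one fixed point; and since $\dim T^{L} \leq 2$ for every nontrivial $L \leq A_5$ with equality for $L = C_2$ (Table \ref{table:A5}), the module $U_{3.i} \oplus U_{3.j}$ is properly $2$-pseudofree, so Corollary \ref{cor:pseudofree} upgrades this to the whole action. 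The main obstacle throughout is ruling out the larger groups $A_5 \times A_5$ and $S_5$, and for both the orientation-preserving hypothesis is essential.
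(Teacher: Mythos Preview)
Your proof is correct and follows the same overall plan as the paper: split according to the $\bR[A_5]$-isomorphism type of $T|_{H}$ (equivalently, according to the structure of $\varSigma^{H}$ from Proposition \ref{prop:hom6A5}), dispose of the $\bZ_2$-homology $3$-sphere case via Proposition \ref{prop:Z2hom3A5} and Proposition \ref{prop:A5A5}, and use the orientation hypothesis to exclude $S_5$ in the surviving case. Two sub-arguments differ in execution. For the $\bR_{A_5}^{\oplus 2}\oplus U_4$ type you pass through $G/H_*\leq O(2)$ and Lemma \ref{lem:euler}; the paper simply notes that $\varSigma^{H}\cong S^2$ and applies Lemma \ref{lem:lowsphere} directly, which is shorter. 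For the $U_{3.i}\oplus U_{3.j}$ type you compute $C_G(H)$ explicitly (cyclic, contained in $SO(2)$ or $\{\pm I\}$), use any nontrivial centralizer to produce a normal prime-order subgroup with zero fixed space forcing $\varSigma^G\cong S^0$, and otherwise embed $G\hookrightarrow\mathrm{Aut}(A_5)=S_5$ via the $N/C$-theorem; the paper instead passes to an intermediate subgroup $G'$ with $[G':A_5]=p$ prime, so that $G'\cong S_5$ or $A_5\times C_p$, and eliminates each. Your centralizer route is a bit more conceptual and treats all primes at once; the paper's index-$p$ reduction is more hands-on but reaches the same conclusions. One small wording issue: a nontrivial cyclic $C_G(H)$ need not have a \emph{unique} prime-order subgroup, only a unique one for each prime dividing its order---but any such subgroup is characteristic and your argument goes through unchanged.
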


\begin{proof}
In this proof we identify $H$ with $A_5$.
By Proposition \ref{prop:hom6A5},
the nonempty $A_5$-fixed-point set $\varSi^{A_5}$ 
has the following three possibilities.
\begin{itemize}
 \item[$\rm(\,I\,)$] $\varSi^{A_5}$ is a $\bZ_2$-homology sphere of dimension $\leq 3$.
 \item[$\rm(I\hspace{-.01em}I)$] $\varSi^{A_5}$ is a disjoint union of two or more circles.
 \item[$\rm(I\hspace{-.15em}I\hspace{-.15em}I)$] $\varSi^{A_5}$ consists of exactly one point,
  say $\varSi^{A_5}=\{x\}$.
\end{itemize}
To complete the proof, we need to show that
in the cases $\rm(\,I\,)$ and $\rm(I\hspace{-.01em}I)$
there does not exist an odd-Euler-characteristic $G/A_5$-action 
on $\varSi^{A_5}$
and that in the case $\rm(I\hspace{-.15em}I\hspace{-.15em}I)$
the restricted one-fixed-point $A_5$-action on $\varSigma$ 
does not extend nontrivially to
an orientation-preserving one-fixed-point $G$-action on $\varSigma$.

Case $\rm(\,I\,)$.
If $\varSi^{A_5}$ is a $\bZ_2$-homology sphere of dimension $\leq 2$,
then Lemma \ref{lem:lowsphere} indicates that 
$\varSigma^{G} \cong S^{k}$, where $0 \leq k \leq 2$.
We next consider the case where $\varSigma^{A_5}$ is a $\bZ_2$-homology $3$-sphere.
Suppose that there were 
an odd-Euler-characteristic $G/A_5$-action on $\varSi^{A_5}$.
The restriction ${\rm Res}^{G}_{A_5} T_{x}(\varSi)$ is isomorphic to
$\bR_{A_5}^{\oplus 3} \oplus U_{3.i}$ ($i =1$ or $2$).
Let $\rho : G \to SO(6)$ be a tangential representation of $G$ at $x \in \varSi^{G}$ and
$\rho^{A_5} : G \to O(3)$ a subrepresentation of $\rho$ corresponding to $T_{x}(\varSi)^{A_5}$.
By Proposition \ref{prop:SOV},
$L=\ker \rho^{A_5}$ is a normal subgroup of $G$ isomorphic to 
a finite subgroup of $SO(3)$ and it contains $A_5$.
Since $A_5$ is maximal in the finite subgroups of $SO(3)$,
$L$ coincides with $A_5$.
This means that the $G/A_5$-action on $\varSigma^{A_5}$ is effective.
As $\chi((\varSigma^{A_5})^{G/A_5}) \equiv 1\;{\rm mod}\;2$,
$G/A_5$ must be isomorphic to $A_5$ by Proposition \ref{prop:Z2hom3A5},
and $G$ is thus isomorphic to $A_5 \times A_5$.
On the other hand, Proposition \ref{prop:A5A5} 
shows that if $G = A_5 \times A_5$ then $|\varSi^{G}|=2$.
This is a contradiction.

Case $\rm(I\hspace{-.01em}I)$.
As seen in the proof of Proposition \ref{prop:hom6A5},
it holds that $\dim T_{x}(\varSigma)^{A_5} =1$ for each $x \in \varSigma^{G}$.
It follows from Lemma \ref{lem:onedim} 
that $\chi(\varSigma^{G}) \equiv 0\;{\rm mod}\;2$.

Case $\rm(I\hspace{-.15em}I\hspace{-.15em}I)$.
It is easy to see that $\varSi^{G}=\{x\}$ and
${\rm Res}^{G}_{A_5} T_{x}(\varSi) \cong U_{3.i} \oplus U_{3.j}$ $(i, j = 1 \,{\rm or}\,2)$.
Since the restricted $A_5$-action on $T_{x}(\varSigma)$ is properly $2$-pseudofree,
the restricted $A_5$-action on $\varSi$ is also properly $2$-pseudofree 
by Corollary \ref{cor:pseudofree}.
Thus it would suffice to prove that 
the properly $2$-pseudofree one-fixed-point $A_5$-action on $\varSi$ 
does not extend to an orientation-preserving one-fixed-point $G$-action on $\varSi$ 
for a finite group $G$ with $[G:A_5] = p$.
As $|{\rm Out}(A_5)|=2$, $G$ is isomorphic to either $S_5$ or $A_5 \times C_p$.
If $G \cong S_5$ then $T_{x}(\varSi)$ is 
isomorphic to the irreducible $\bR[S_5]$-module $V_{6}$.
As $\dim V_{6}^{\hana{C}_{2}}=3$,
the $G$-action on $\varSi$ is orientation-reversing,
which contradicts the hypothesis that the $G$-action on $\varSi$ is orientation-preserving.
If $G \cong A_5 \times C_p$ then $T_{x}(\varSi)$ is isomorphic to the tensor product of
a $3$-dimensional irreducible $\bR[A_5]$-module $U$ 
and a $2$-dimensional $\bR[C_{p}]$-module $W$ with $W^{C_p}=\{0\}$.
As $\dim T_{x}(\varSi)^{E \times C_p}=0$,
it follows from Lemma \ref{lem:lowsphere} 
that $\varSi^{G} \cong S^0$,
which contradicts $\varSigma^{G}=\{x\}$.
Thus, we conclude that $G$ coincides with $A_5$.
\end{proof}

The proof of Case $\rm(I\hspace{-.15em}I\hspace{-.15em}I)$ 
in Lemma \ref{lem:A5} is based on the proof of 
\cite[Proposition 3.1 $\rm(\hspace{.08em}ii\hspace{.08em})$]{LaiTra}.

\vskip0.1cm

We now check the assertion in Remark \ref{rem:main}.

\begin{prop}\label{prop:main}
Let $Z$ be a group of order $2$ and $G = S_5$ or $A_5 \times Z$.
If the $G$-action on $\varSigma$ is an orientation-reversing effective one-fixed-point $G$-action
then it is properly $3$-pseudofree. 
\end{prop}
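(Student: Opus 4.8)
The plan is to reduce the whole statement to the orientation-preserving subgroup, which here is a copy of $A_5$, and then to control the few remaining subgroups by a single tangential computation at the unique fixed point. Let $A \le G$ be the subgroup of orientation-preserving elements; since the action is orientation-reversing, $[G:A] = 2$, and in both cases $A \cong A_5$ (the alternating subgroup of $S_5$, respectively the unique index-$2$ subgroup $A_5 \times E$ of $A_5 \times Z$). As $G/A \cong C_2$ is a $2$-group and $(\varSigma^{A})^{G/A} = \varSigma^{G}$ is a single point, Lemma \ref{lem:modp} gives $\chi(\varSigma^{A}) \equiv \chi(\varSigma^{G}) \equiv 1 \bmod 2$; thus the effective, orientation-preserving $A_5$-action on $\varSigma$ is an odd-Euler-characteristic action, and Theorem \ref{thm:main} forces $\varSigma^{A} = \varSigma^{G} = \{x\}$. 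By Proposition \ref{prop:hom6A5} the single fixed point pins the tangential $A_5$-module down to $\mathrm{Res}^{G}_{A}T_{x}(\varSigma) \cong U_{3.1} \oplus U_{3.2}$ up to a choice of summands in $\{U_{3.1}, U_{3.2}\}$ (the only module type in that proposition consistent with a single fixed point), and by Lemma \ref{lem:A5} together with Corollary \ref{cor:pseudofree} the restricted $A_5$-action on $\varSigma$ is properly $2$-pseudofree.

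Next I would prove $3$-pseudofreeness by splitting a nontrivial subgroup $H \le G$ according to how it meets $A$. If $H \cap A \neq E$, then $\varSigma^{H} \subseteq \varSigma^{H \cap A}$; since $H \cap A$ is a nontrivial subgroup of $A_5$, every component of $\varSigma^{H \cap A}$, and hence every component of $\varSigma^{H}$, has dimension $\le 2$ by the $2$-pseudofreeness just established. If $H \cap A = E$, then $H$ injects into $G/A \cong C_2$, so $H = \langle g \rangle$ for an orientation-reversing involution $g \notin A$; here $\varSigma^{H}$ is a \emph{connected} $\bZ_2$-homology sphere through $x$ by Smith's theorem (Lemma \ref{lem:smith}), so its dimension equals $\dim T_{x}(\varSigma)^{\langle g \rangle}$ and the problem reduces to bounding this single integer by $3$.

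The main step is therefore this tangential computation, which requires fixing the full $G$-module structure of $T_{x}(\varSigma)$. For $G = S_5$ the summands of the restriction are interchanged by any odd permutation, so $T_{x}(\varSigma)$ is the induced module $V_6$; since $\dim V_6^{\hana{C}_2} = 3$ and every orientation-reversing involution of $S_5$ is a transposition, each such $H$ gives $\dim T_{x}(\varSigma)^{H} = 3$. For $G = A_5 \times Z$ with generator $z$ of $Z$, the central element $z$ acts as a scalar $\pm 1$ on each $3$-dimensional $A_5$-summand; effectiveness rules out the trivial action and, crucially, orientation-reversibility forces $\det\bigl(z \mid T_{x}(\varSigma)\bigr) = -1$, so $z$ acts as $+1$ on one summand $V_{+}$ and as $-1$ on the other, $V_{-}$. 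Using Table \ref{table:A5} (an involution of $A_5$ acts on a $3$-dimensional irreducible with eigenvalues $1,-1,-1$), a short eigenvalue count then shows that every orientation-reversing involution $g = (a,z)$, whether $a = e$ or $a$ is an involution of $A_5$, satisfies $\dim T_{x}(\varSigma)^{\langle g \rangle} = 3$. This yields $3$-pseudofreeness in both cases; properness is immediate, since taking $g = (e,z)$ (respectively a transposition in $S_5$) exhibits a subgroup $H \cong C_2$ whose fixed set $\varSigma^{H}$ is a $\bZ_2$-homology $3$-sphere, a component of dimension exactly $3$. The one genuine pitfall to guard against is precisely this determinant bookkeeping for $A_5 \times Z$: the naive guess that $z$ acts as $-\mathrm{id}$ would make the action orientation-preserving and produce a $4$-dimensional fixed set, so the orientation-reversing hypothesis must be invoked exactly here to force the mixed structure $V_{+} \oplus V_{-}$.
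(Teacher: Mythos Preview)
Your argument is correct and follows essentially the same route as the paper: reduce to the orientation-preserving index-$2$ subgroup $A_5$, use the one-fixed-point/$2$-pseudofree conclusion for that restriction, and then control the remaining subgroups by a tangential computation at $x$. Two small remarks. First, invoking Theorem~\ref{thm:main} here is a forward reference in the logical order of the paper; what you actually need (and also cite) is Lemma~\ref{lem:A5} applied with $G=H=A_5$, which is already available at this point and yields both $\varSigma^{A_5}=\{x\}$ and the $2$-pseudofreeness. Second, your treatment of $A_5\times Z$ is in fact slightly more thorough than the paper's: you verify $\dim T_x(\varSigma)^{\langle(a,z)\rangle}=3$ for \emph{every} orientation-reversing involution $(a,z)$, whereas the paper only records the case $a=e$; since $(a,z)$ with $a\neq e$ is not conjugate to $(e,z)$ in $A_5\times Z$, your extra eigenvalue count is exactly the missing line that makes the $3$-pseudofreeness claim airtight.
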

\begin{proof}

Let $\hana{C}_{2}$ be a subgroup of $S_5$ of order $2$ not contained in $A_5$,
and let $E_{1}$ and $E_2$ denote the trivial subgroups of $A_5$ and $Z$, respectively.
In this proof, we identify $A_5$ and $Z$ with $A_5 \times E_2$ and $E_1 \times Z$, respectively.
We note that ${\rm Res}^{G}_{A_5} T_{x}(\varSi)$ is isomorphic to
$U_{3.i} \oplus U_{3.j} \;(i, j = 1 \,{\rm or}\,2)$
for the unique $G$-fixed point $x$ of $\varSigma$
and that $S_5 = \langle A_5, \hana{C}_2 \rangle$ and 
$A_5 \times Z = \langle A_5, Z \rangle$.
Since the restricted one-fixed-point $A_5$-action on $\varSigma$ is $2$-pseudofree,
it suffices to prove that 
$\varSi^{\hana{C}_2}$ (resp. $\varSi^{Z}$) is a $\bZ_2$-homology $3$-sphere,
more simply,
that $\dim T_{x}(\varSi)^{\hana{C}_{2}}=3$ (resp. $\dim T_{x}(\varSi)^{Z}=3$) 
by Lemma \ref{lem:smith}.
If $G = S_5$ then $T_{x}(\varSi)$ is isomorphic to the irreducible $\bR[S_5]$-module $V_{6}$,
and we have $\dim T_{x}(\varSi)^{\hana{C}_2} = 3$.
In the case $G = A_5 \times Z$,
since the $G$-action on $T_{x}(\varSi)$ is orientation-reversing and effective,
$T_{x}(\varSi)$ must be isomorphic to
\[
U_{3.i} \otimes \bR_{Z} \oplus U_{3.j} \otimes \bR_{\pm},
\]
where $\bR_{Z}$ (resp. $\bR_{\pm}$) is the trivial (resp.  the nontrivial) 
$1$-dimensional $\bR[Z]$-module.
Thus, we have $\dim T_{x}(\varSi)^{Z} =3$.
\end{proof}

The alternating group $A_6$ on the six letters $1$, $2$, $\ldots$, $6$ has 
two $5$-dimensional irreducible $\bR[A_6]$-modules $W_{5.1}$ and $W_{5.2}$ (up to isomorphism)
and there are no faithful irreducible $\bR[A_6]$-modules of dimensions $\leq 4$.
We fix two of subgroups of $A_6$ as follows:
\[
\begin{aligned}
C_3 \times C_3 & = \langle\; (1,2,3), \;(4,5,6) \;\rangle\;{\rm and}\;\\
(C_3 \times C_3) \rtimes C_4 &= \langle\; (1,2,3), \;(4,5,6),\; (1,6)(2,4,3,5).
\end{aligned}
\]

\noindent
Note that $C_3 \times C_3$ is a normal subgroup of $(C_3 \times C_3) \rtimes C_4$.
The dimensions of the $H$-fixed-point sets of $W_{5.1}$ and $W_{5.2}$ 
for the fixed subgroups $H$ of $A_6$ are as in Table \ref{table:A6}.

\begin{table}[!ht]
\begin{center}
\begin{small}
\caption{The dimensions of $W_{5.1}^{H}$ and $W_{5.2}^{H}$.}
\label{table:A6}
\begin{tabular}{|c||c|c|c|c|c|c|c|c|c|c|c|c|c|}
\hline
{}                 &$C_3 \times C_3$ &$(C_3 \times C_3) \rtimes C_4$ \\ \hline
$W_{5.1}$          &       $1$       &            $0$             \\ \hline
$W_{5.2}$          &       $1$       &            $0$             \\ \hline
\end{tabular}
\end{small}
\end{center}
\end{table}

\begin{prop}\label{prop:A6}
If $G=A_6$ and $\varSigma^{G} \neq \emptyset$, then $\varSi^{G}$ is diffeomorphic to $S^1$.
\end{prop}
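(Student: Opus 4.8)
The plan is to determine the tangential $A_6$-module at a fixed point, read off the fixed-point dimensions of a suitable normal chain of $3$- and $2$-subgroups, and then use Smith theory (Lemma~\ref{lem:smith}) twice to trap $\varSigma^{A_6}$ between a $2$-sphere and a circle.

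First I would argue that the tangential module $T_{x}(\varSigma)$ at a point $x \in \varSigma^{A_6}$ is faithful: its kernel is normal in the simple group $A_6$, and a nontrivial kernel would act trivially on a neighbourhood of $x$, hence on the connected manifold $\varSigma$, contradicting effectiveness. Since $A_6$ is simple, every nontrivial irreducible $\bR[A_6]$-module is faithful, so by the stated facts the only irreducible real modules of dimension $\leq 6$ are the trivial module $\bR_{A_6}$ and the two $5$-dimensional modules $W_{5.1}, W_{5.2}$. A faithful $6$-dimensional module can therefore only be $W_{5.i}\oplus \bR_{A_6}$ with $i=1$ or $2$. In particular $\dim T_{x}(\varSigma)^{A_6}=1$ at every fixed point, so $\varSigma^{A_6}$ is a closed $1$-manifold, i.e.\ a disjoint union of circles; the remaining task is to show it is connected.

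Writing $P = C_3\times C_3$ and $Q=(C_3\times C_3)\rtimes C_4$, Table~\ref{table:A6} gives $\dim W_{5.i}^{P}=1$ and $\dim W_{5.i}^{Q}=0$, whence $\dim T_{x}(\varSigma)^{P}=2$ and $\dim T_{x}(\varSigma)^{Q}=1$. Since $P$ is a $3$-group, Lemma~\ref{lem:smith} shows $\varSigma^{P}$ is a $\bZ_3$-homology sphere, necessarily of dimension $2$, and a connected closed surface that is a $\bZ_3$-homology sphere must be $S^2$. As $P\nsl Q$, the group $Q/P\cong C_4$ acts on $\varSigma^{P}\cong S^2$ with $(\varSigma^{P})^{Q/P}=\varSigma^{Q}$; since $Q/P$ is a $2$-group, Lemma~\ref{lem:smith} again shows $\varSigma^{Q}$ is a $\bZ_2$-homology sphere, of dimension $1$, hence $\varSigma^{Q}\cong S^1$.

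Finally, $\varSigma^{A_6}\subseteq \varSigma^{Q}\cong S^1$ because $Q\leq A_6$. At each of its points $\varSigma^{A_6}$ has dimension $\dim T_{x}(\varSigma)^{A_6}=1$, equal to $\dim \varSigma^{Q}$, so $\varSigma^{A_6}$ is open in $\varSigma^{Q}$; being also compact it is a union of connected components of $\varSigma^{Q}$. As $\varSigma^{Q}\cong S^1$ is connected and $\varSigma^{A_6}\neq\emptyset$ by hypothesis, we conclude $\varSigma^{A_6}=\varSigma^{Q}\cong S^1$. The step I expect to be the crux is precisely this connectedness argument: a priori $\varSigma^{A_6}$ could be several circles (compare the undecided case~(2) in Proposition~\ref{prop:hom6A5} for $A_5$), and what rules this out is the sandwiching inside the \emph{connected} circle $\varSigma^{Q}$, obtained by running Smith theory through the solvable $\{2,3\}$-subgroup chain $P\nsl Q$ rather than through $A_6$ directly, the latter yielding only that $\chi(\varSigma^{A_6})$ is even via Lemma~\ref{lem:onedim}.
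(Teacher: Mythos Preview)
Your proof is correct and follows essentially the same route as the paper's: identify $T_{x}(\varSigma)\cong \bR_{A_6}\oplus W_{5.i}$, use the chain $P=C_3\times C_3 \nsl Q=(C_3\times C_3)\rtimes C_4$ together with the fixed-point dimensions from Table~\ref{table:A6} to force $\varSigma^{Q}\cong S^1$, and then conclude $\varSigma^{A_6}=\varSigma^{Q}$ by comparing dimensions. The only cosmetic difference is that the paper invokes Lemma~\ref{lem:lowsphere} to do both Smith-theory steps and the low-dimensional sphere recognition in one stroke, whereas you unpack that lemma by hand.
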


\begin{proof}
The tangential $G$-module $T_{x}(\varSi)$ at $x \in \varSigma^{G}$ is isomorphic to 
$\bR_{A_6} \oplus W_{5.1}$ or $\bR_{A_6} \oplus W_{5.2}$.
By Table \ref{table:A6}, we have
\[
\dim T_{x}(\varSi)^{C_3 \times C_3} = 2 \;{\rm and}
\; \dim T_{x}(\varSi)^{(C_3 \times C_3) \rtimes C_4} 
= \dim T_{x}(\varSi)^{A_6} = 1.
\]
It follows from Lemma \ref{lem:lowsphere} that 
$\varSigma^{(C_3 \times C_3) \rtimes C_4} \cong S^1$.
Hence, we have $\varSigma^{G} \cong S^1$.
\end{proof}

We finally consider the case that $G = PSL(2,7)$, $A_7$, or $PSU(4,2)$.
In the rest of this section, 
for simplicity, we will use the following notations:
\[
G_{1} = PSL(2,7),\; G_{2} = A_7,\, {\rm and}\;G_{3} = PSU(4,2).
\]
\noindent
Also, for a subgroup $H$ of $G$,
$H$ may be denoted by $H(G)$ to emphasize that $H$ is a subgroup of $G$.
Similarly,
an $\bR[G]$-module $V$ may be denoted by $V(G)$.
For finite groups $H$ and $K$,
if a finite group $G$ is an extension of $K$ by $H$,
then $G$ will be denoted by $H \star K$ as long as there are no confusions in discussions.

We fix the subgroups of $G_1$, $G_2$, and $G_3$ required in our proof as follows:

\[
\begin{aligned}
G_{1} &= \langle\; (3,5,7)(4,8,6),\; (1,2,6)(3,4,8) \;\rangle \cong PSL(2,7)\\
Q(G_{1}) &= \langle\; (2,3,5,4,7,8,6) \;\rangle \cong C_7,\\
P_2(G_{1})  &= \langle\; (1,2,8,4)(3,7,6,5),\;(1,3)(2,5)(4,7)(6,8) \;\rangle \cong D_8,\\
H(G_{1})    &= \langle\; (1,2,8,4)(3,7,6,5),\;(1,3)(2,5)(4,7)(6,8),\;(1,5,7)(2,4,6) \;\rangle 
\cong S_4,\;{\rm and}\\
K(G_{1})    &= \langle\; (1,2,8,4)(3,7,6,5),\;(1,3)(2,5)(4,7)(6,8),\;(2,5,4)(3,6,8) \;\rangle \cong S_4.\\
\end{aligned}
\]
\vskip0.2cm
\[
\begin{aligned}
G_2 &= \langle\; (1,2,3,4,5,6,7),\; (5,6,7) \;\rangle \cong A_7\\
Q(G_{2}) &= \langle\; (1,2,3,4,5,6,7) \;\rangle \cong C_7,\\
P_2(G_{2})  &= \langle\; (2,3)(6,7),\;(4,6)(5,7) \;\rangle \cong D_8,\\
H(G_{2})    &= \langle\; (2,3)(6,7),\;(4,6)(5,7),\;(1,3,2)(4,5)(6,7) \;\rangle \cong (D_4 \times C_3) \star C_2,\;{\rm and}\\
K(G_{2})    &= \langle\; (2,3)(6,7),\;(4,6)(5,7),\;(2,5,7)(3,4,6) \;\rangle \cong S_4.
\end{aligned}
\]
\vskip0.2cm
\[
\begin{aligned}
G_{3} &= \langle\; a_3, \;a_4, \; a_5, \;a_6, \; a_7  \;\rangle \cong PSU(4,2)\\
Q(G_{3}) &= \langle\; a_1,\;a_2 \;\rangle \cong (C_3 \times C_3 \times C_3) \star C_3,\\
P_2(G_{3}) &= \langle\; a_3,\;a_4,\;a_5 \;\rangle \cong (C_2 \times C_2 \times C_2) \star C_2,\\
H(G_{3}) &= \langle\; a_3, \;a_4, \; a_5, \; a_6 \;\rangle 
\cong ((C_2 \times C_2 \times C_2) \star C_6) \star C_2,\;{\rm and}\\
K(G_{3}) &= \langle\; a_3, \;a_4, \; a_5, \; a_7 \;\rangle \cong 
((C_2 \times C_2 \times C_2 \times C_2) \star D_4) \star C_3.
\end{aligned}
\]

Here, $a_1$, $a_2$, $a_3$, $a_4$, $a_5$, $a_6$, and $a_7$ denote the following permutations:
\[
\begin{small}
\begin{aligned}
a_1 &=(1,9,34,26,14,20,28,40,12)(2,33,16,24,11,10,30,21,38)\\
    &\hskip0.5cm(3,19,5,31,35,36,23,7,18)(4,29,25)(6,32,39)(8,17,13)(15,37,22),\\
a_2 &=(1,2,3)(5,18,36)(6,15,32)(7,21,40)(8,22,37)(9,19,33)\\
    &\hskip0.5cm(10,16,38)(11,14,35)(12,20,34)(13,17,39)(23,28,30)(24,31,26),\\
a_3 &=(1,39,10,27)(2,31,21,32)(3,20,37,13)(4,7,23,22)\\
    &\hskip0.5cm(5,19,24,29)(6,12,35,25)(9,15)(11,34,36,17)(14,16,26,40)(28,38),\\
a_4 &=(1,35)(2,14)(3,11)(4,29)(5,22)(6,10)(7,24)(8,18)(9,28)(12,39)\\
    &\hskip0.5cm(13,34)(15,38)(16,32)(17,20)(19,23)(21,26)(25,27)(30,33)(31,40)(36,37),\\
a_5 &=(1,23)(2,37)(3,21)(4,10)(5,12)(6,19)(7,39)(8,30)(9,28)(11,14)(13,32)\\
    &\hskip0.5cm(15,38)(16,17)(18,33)(20,31)(22,27)(24,25)(26,36)(29,35)(34,40),\\
a_6 &=(1,24,25)(2,33,36)(3,15,20)(4,5,10)(6,22,29)(7,35,27)(8,31,34)\\
    &\hskip0.5cm(9,26,16)(11,17,38)(13,40,18)(14,37,30)(21,28,32),\,{\rm and}\\
a_7 &=(1,22,39,37,40,25)(2,19,23,21,29,4)(3,16,12,10,7,27)\\
    &\hskip0.5cm(5,13,35,31,17,36)(6,32,34,11,24,20)(8,38)(9,30,15,33,28,18)(14,26).
\end{aligned}
\end{small}
\]

As seen in  Table \ref{table:simple},
$G_{1}$,  $G_{2}$, and $G_{3}$ have 
three, one, and one $6$-dimensional irreducible real modules,
say $V_{6.1}(G_1)$, $V_{6.2}(G_1)$, $V_{6.3}(G_1)$, $V_{6}(G_2)$, and $V_{6}(G_3)$,
respectively.
The next table shows the dimensions of the $H$-fixed-point sets 
of the $6$-dimensional irreducible real modules $V_{*}(G_{i})$ for the fixed subgroups 
$H=Q(G_{i})$, $P_2(G_{i})$, $H(G_{i})$, and $K(G_{i})$ of $G_{i}$ ($i = 1$, $2$, or $3$).

\begin{table}[!ht]
\begin{center}
\begin{small}
\caption{The dimensions of $V_{*}(G_{i})^{H}$.}
\label{table:irr6}
\begin{tabular}{|c||c|c|c|c||c|}
\hline
{} & $Q(G_{1})$ & $P_2(G_{1})$ & $H(G_{1})$ & $K(G_{1})$    \\ \hline
$V_{6.1}(G_1)$  & $0$        & $0$          & $0$        & $0$           \\ \hline 
$V_{6.2}(G_1)$  & $0$        & $0$          & $0$        & $0$           \\ \hline              
$V_{6.3}(G_1)$  & $0$        & $2$          & $1$        & $1$           \\ \hline                   
{}        & $Q(G_{2})$ & $P_2(G_{2})$ & $H(G_{2})$ & $K(G_{2})$    \\ \hline
$V_{6}(G_2)$  & $0$        & $2$          & $1$        & $1$           \\ \hline
{}    & $Q(G_{3})$ & $P_2(G_{3})$ & $H(G_{3})$ & $K(G_{3})$    \\ \hline
$V_{6}(G_3)$    & $0$        & $2$          & $1$        & $1$           \\ \hline
\end{tabular}  
\end{small}
\end{center}
\end{table}

We note that the fixed subgroups 
$Q(G_{i})$, $P_{2}(G_{i})$, $H(G_{i})$, and $K(G_{i})$ of $G_{i}$ and 
the $6$-dimensional irreducible $\bR[G_{i}]$-modules $V_{*}(G_{i})$
satisfy the following five conditions.
\begin{enumerate}
  \item[$(1)$] $Q(G_{i})$ is of prime-power order and $\dim V_{*}(G_{i})^{Q(G_{i})}=0$.
  \item[$(2)$] $H(G_{i})$ and $K(G_{i})$ generate $G_{i}$,
  i.e. $\langle H(G_{i}), K(G_{i}) \rangle = G_{i}$.
  \item[$(3)$] The intersection of $H(G_{i})$ and $K(G_{i})$ 
  coincides with the $2$-subgroup $P_2(G_{i})$.
  \item[$(4)$] $H(G_{i})$ and $K(G_{i})$ belong to $\hen{G}^{2}_{2}$ 
  (hence to $\hen{G}$).
  \item[$(5)$] $\dim V_{*}(G_{i})^{P_2(G_{i})} = 
  \dim V_{*}(G_{i})^{H(G_{i})} + \dim V_{*}(G_{i})^{K(G_{i})}$.
\end{enumerate}

Let $G$ be one of $G_{1}$, $G_{2}$, and $G_{3}$.
The following lemma shows that 
if $G$ acts effectively on $\varSigma$ with a $G$-fixed point
then the $G$-fixed-point set of $\varSigma$ consists of exactly two points.
The method of our proof of the following lemma is based on the ideas of 
\cite[Theorems 2.1 and 3.2]{BorMiz}, \cite[Lemmas 2.2 and 2.3]{Miz},
\cite[Proposition 2.4]{MorNew1}, and \cite[Lemma 3.1]{Tam}.

\begin{lem}\label{lem:S0}
Let $V$ be an $\bR[G]$-module $V$ and let
$Q$, $P_2$, $H$, and $K$ be subgroups of $G$.
Suppose that these satisfy the following five conditions:
\begin{enumerate}
\item[$(1)$] $Q$ is of prime-power order and $\dim V^{Q}=0$.
\item[$(2)$] $H$ and $K$ generate $G$.
\item[$(3)$] $P_{2}$ is of $2$-power order and is included in $H \cap K$.
\item[$(4)$] $\dim V^{P_{2}} = \dim V^{H} + \dim V^{K}$.
\item[$(5)$] If $\dim V^{H} = 0$ (resp. $\dim V^{K} = 0$) 
then $K$ (resp. $H$) belongs to $\hen{G}$.
\end{enumerate}

\noindent
If $G$ acts on a homology sphere $\Xi$ and there exists 
a $G$-fixed point $x$ of $\Xi$
such that $T_{x}(\Xi)$ is isomorphic to $V$ as $\bR[G]$-modules,
then $\Xi^{G}$ consists of exactly two points, i.e. $\Xi^{G} \cong S^0$. 
\end{lem}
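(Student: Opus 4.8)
The plan is to prove the two facts $|\Xi^{G}|\le 2$ and $|\Xi^{G}|\equiv 0 \pmod 2$ separately; together with $x\in\Xi^{G}$ these force $\Xi^{G}\cong S^{0}$. First I would bound $|\Xi^{G}|$ from above. Since $Q$ is of prime-power order, say a $p$-group, and $\Xi$ is a homology sphere (hence a $\bZ_p$-homology sphere), Smith's theorem (Lemma \ref{lem:smith}) shows that $\Xi^{Q}$ is a $\bZ_p$-homology sphere; its dimension at $x$ equals $\dim T_x(\Xi)^{Q}=\dim V^{Q}=0$ by hypothesis $(1)$, so $\Xi^{Q}\cong S^{0}$ consists of exactly two points. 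As $Q\subseteq G$ we have $\Xi^{G}\subseteq \Xi^{Q}$, and since $x\in\Xi^{G}$ this yields $1\le |\Xi^{G}|\le 2$. Because $\Xi^{G}$ is $0$-dimensional, $\chi(\Xi^{G})=|\Xi^{G}|$, so it now suffices to show that this number is even, equivalently to rule out $\Xi^{G}=\{x\}$.

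For the parity I would work inside $\Sigma:=\Xi^{P_{2}}$. By hypothesis $(3)$, $P_{2}$ is a $2$-group contained in $H\cap K$, so $\Sigma$ is a $\bZ_2$-homology sphere of dimension $d=\dim V^{P_2}$ by Lemma \ref{lem:smith}, and $\Xi^{H},\Xi^{K}\subseteq\Sigma$ with $\Xi^{H}\cap\Xi^{K}=\Xi^{\langle H,K\rangle}=\Xi^{G}$ by hypothesis $(2)$. At $x$ we have $T_x\Xi^{H}=V^{H}$ and $T_x\Xi^{K}=V^{K}$; their intersection is $V^{G}\subseteq V^{Q}=0$, and by hypothesis $(4)$ their dimensions add up to $\dim V^{P_2}=\dim T_x\Sigma$, so $V^{P_2}=V^{H}\oplus V^{K}$ and $\Xi^{H},\Xi^{K}$ cross transversally at $x$. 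In the main case $0<\dim V^{H},\dim V^{K}<d$ I would then argue by contradiction: if $\Xi^{G}=\{x\}$, the closed submanifolds $\Xi^{H}$ and $\Xi^{K}$ meet in the single transversal point $x$, so their mod-$2$ intersection number in $\Sigma$ equals $1$; but a closed submanifold of $\Sigma$ of dimension strictly between $0$ and $d$ carries the zero class in $H_{*}(\Sigma;\bZ_2)$, since those homology groups of the $\bZ_2$-homology sphere $\Sigma$ vanish, and hence this intersection number is $0$, a contradiction. Thus $|\Xi^{G}|=2$ in this case.

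It remains to dispose of the degenerate case $\dim V^{P_2}=0$, which by hypothesis $(4)$ means $\dim V^{H}=\dim V^{K}=0$; then both clauses of hypothesis $(5)$ apply and $H,K\in\mathcal{G}$, while $\Sigma=\Xi^{P_2}\cong S^{0}=\{x,y\}$ so that $\Xi^{H},\Xi^{K}\subseteq\{x,y\}$. Here I would invoke Lemma \ref{lem:euler}: writing $H\in\mathcal{G}^{q}_{p}$, we get $\chi(\Xi^{H})\equiv\chi(S^{k})\pmod q$ for some $k$; since $\chi(S^{k})$ is even while $\chi(\Xi^{H})=|\Xi^{H}|\in\{1,2\}$, the congruence forces $|\Xi^{H}|=2$, i.e. $\Xi^{H}=\{x,y\}$, and likewise $\Xi^{K}=\{x,y\}$, whence $\Xi^{G}=\Xi^{H}\cap\Xi^{K}=\{x,y\}$.

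The main obstacle is the rigor of the intersection-number step. To speak of a mod-$2$ class $[\Xi^{K}]\in H_{*}(\Sigma;\bZ_2)$ and to identify the geometric count $|\Xi^{G}|$ with an intersection number, I need $\Xi^{H}$ and $\Xi^{K}$ to be closed submanifolds of pure dimension (equal to $\dim V^{H}$ and $\dim V^{K}$) and the crossing at $x$ to be genuinely transversal. Purity, that is, the constancy of the local dimension of $\Xi^{H}$ along its components and the exclusion of stray lower-dimensional pieces sitting on the other fixed set at the second point of $\Xi^{Q}$, is exactly what must be controlled; I would obtain it from Smith theory for the $2$-group $P_2$ together with the isomorphism of tangential modules (Proposition \ref{prop:isomorphic}) for whichever of $H,K$ satisfies its prime-power hypotheses. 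This is where the explicit subgroups recorded in the tables, and the fact that at least one of $H,K$ is of the required type, enter the argument.
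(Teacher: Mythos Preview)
Your outline follows the paper's strategy---bound $|\Xi^G|\le 2$ via $\Xi^Q\cong S^0$, then exclude $|\Xi^G|=1$ by casework on $(\dim V^H,\dim V^K)$---and your ``main'' and ``degenerate'' cases correspond to the paper's Cases~(IV) and~(I). There are, however, two gaps.

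First, your case split is incomplete: you never treat the mixed case where exactly one of $\dim V^H,\dim V^K$ vanishes and the other equals $d=\dim V^{P_2}>0$. The intersection argument is unavailable there, since the top-dimensional component is all of $\Xi^{P_2}$ and represents the nonzero class in $H_d(\Xi^{P_2};\bZ_2)$. The paper handles this separately (Cases~(II)/(III)): if, say, $\dim V^H=d>0$ and $\dim V^K=0$, then the connected $\bZ_2$-homology sphere $\Xi^{P_2}$ coincides with $\Xi^H$, whence $\Xi^G=\Xi^H\cap\Xi^K=\Xi^K$, and hypothesis~(5) together with Lemma~\ref{lem:euler} gives $\chi(\Xi^K)\ne 1$.

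Second, in the main case your purity worry is self-inflicted and your proposed cure does not work. You intersect the full fixed sets $\Xi^H,\Xi^K$, which need not be equidimensional, and then appeal to Proposition~\ref{prop:isomorphic}; but nothing in the hypotheses of the lemma guarantees that $H$ or $K$ consists only of prime-power-order elements, and invoking ``the explicit subgroups recorded in the tables'' would make the lemma depend on its intended applications. The paper avoids this entirely by passing to the connected components $\Xi_x^H,\Xi_x^K$ through $x$. These are automatically closed submanifolds of $\Xi^{P_2}$ of the pure dimensions $\dim V^H$ and $\dim V^K$; under the assumption $\Xi^G=\{x\}$ they meet only at $x$ (since $\Xi_x^H\cap\Xi_x^K\subset\Xi^H\cap\Xi^K=\Xi^G$), transversally by~(4), and each carries the zero class in $H_{*}(\Xi^{P_2};\bZ_2)$ because $0<\dim V^H,\dim V^K<d$. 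That already yields the contradiction, with no extra purity hypothesis needed.
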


\begin{proof}
By Lemma \ref{lem:smith},
we have $\{x\} \subset \Xi^{G} \subset \Xi^{Q} \cong S^0$.
Thus we need to check that $\Xi^{G} \neq \{x\}$ 
in the following four cases.
\begin{enumerate}
  \item[$\rm(\,I\,)$] $\dim T_{x}(\Xi)^{P_{2}} 
  = \dim T_{x}(\Xi)^{H} 
  = \dim T_{x}(\Xi)^{K} =0$.
  \item[$\rm(I\hspace{-.01em}I)$] 
  $\dim T_{x}(\Xi)^{P_{2}} 
  = \dim T_{x}(\Xi)^{H} >0$ 
  and $\dim T_{x}(\Xi)^{K}=0$.
  \item[$\rm(I\hspace{-.15em}I\hspace{-.15em}I)$] 
  $\dim T_{x}(\Xi)^{P_{2}} 
  = \dim T_{x}(\Xi)^{K} >0$ 
  and $\dim T_{x}(\Xi)^{H}=0$.
  \item[$\rm({I}\hspace{-1.2pt}\mathrm{V})$] 
  $\dim T_{x}(\Xi)^{H}>0$ and $\dim T_{x}(\Xi)^{K}>0$.
\end{enumerate}

We note that it follows from the hypotheses (2) and (3)
that $\Xi^{P_2} \supset \Xi^{H} \cap \Xi^{K} = \Xi^{G}$.

Case $\rm(\,I\,)$.
By the hypothesis that both of $H$ and $K$ belong to $\hen{G}$, 
it follows from Lemma \ref{lem:euler} that $\chi(\Xi^{H}) \neq 1$ and $\chi(\Xi^{K}) \neq 1$.
By Lemma \ref{lem:smith},
$\Xi^{P_{2}}$ consists of exactly two points 
(hence $|\Xi^{H}| = |\Xi^{K}| = 2$).
Thus we have
\[
\Xi^{P_{2}} =  \Xi^{H} \cap \Xi^{K} = \Xi^{G} \neq \{x\}.
\]

Case $\rm(I\hspace{-.01em}I)$.
Since $\Xi^{P_{2}}$ is a connected closed manifold 
(by Lemma \ref{lem:smith}),
$\Xi^{P_{2}}$ coincides with $\Xi^{H}$.
Thus it holds that
\[
\Xi^{G} = \Xi^{H} \cap \Xi^{K} = \Xi^{P_{2}} \cap \Xi^{K} = \Xi^{K}.
\]
As $K \in \hen{G}$, it follows from Lemma \ref{lem:euler} 
that $\Xi^{G} = \Xi^{K} \neq \{x\}$.

Case $\rm(I\hspace{-.15em}I\hspace{-.15em}I)$.
We can prove that $\Xi^{G} = \Xi^{H} \neq \{x\}$
in the same way as Case $\rm(I\hspace{-.01em}I)$
by replacing $H$ with $K$.

Case $\rm({I}\hspace{-1.2pt}\mathrm{V})$.
Suppose that $\Xi^{G}$ were a singleton, i.e.  $\Xi^{G} = \{x\}$.
Let $\Xi_{x}^{H}$ and $\Xi_{x}^{K}$ denote the connected components 
of $\Xi^{H}$ and $\Xi^{K}$ including $x$, respectively.
The closed submanifolds $\Xi_{x}^{H}$ and $\Xi_{x}^{K}$ 
of a $\bZ_2$-homology sphere $\Xi^{P_{2}}$ intersect
at the unique $G$-fixed point $x$, transversely.
On the other hand,
the $\bZ_2$-intersection number 
of $\Xi_{x}^{H}$ and $\Xi_{x}^{K}$ on $\Xi^{P_2}$ must be trivial,
which contradicts the assumption that 
$\Xi^{G} = \{x\}$. Thus we obtain $\Xi^{G} \neq \{x\}$.
\end{proof}

\begin{prop}\label{prop:A7PSL27PSU42}
If $G = G_{1}$, $G_{2}$, or $G_{3}$ and $\varSigma^{G} \neq \emptyset$,
then $\varSi^{G}$ is diffeomorphic to $S^0$.
\end{prop}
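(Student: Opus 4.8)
The plan is to reduce the proposition to a single application of Lemma \ref{lem:S0}, after identifying the tangential module $T_{x}(\varSigma)$ at a fixed point and checking that the explicit subgroups $Q(G_{i})$, $P_{2}(G_{i})$, $H(G_{i})$, $K(G_{i})$, together with the six-dimensional irreducible modules $V_{*}(G_{i})$, satisfy the five hypotheses of that lemma. Since the section hypothesis already assumes an effective $G$-action, and $\varSigma^{G} \neq \emptyset$ is given, there is a fixed point to work with; if $\varSigma^{G}=\emptyset$ there is nothing to prove.

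First I would fix $x \in \varSigma^{G}$ and examine the tangential $\bR[G]$-module $T_{x}(\varSigma)$, which is six-dimensional because $\varSigma$ is a six-manifold. Because the $G$-action is effective and $G=G_{i}$ is a nonabelian simple group, the tangential representation $G \to O(6)$ is faithful: its kernel is normal in the simple group $G$, and it cannot equal $G$, for otherwise the slice theorem would make $\varSigma^{G}$ a neighborhood of $x$ (of dimension $\dim T_{x}(\varSigma)^{G}=6$), hence open, so that by connectedness of $\varSigma$ we would get $\varSigma^{G}=\varSigma$ and $G$ would act trivially, contradicting effectiveness. Thus the kernel is trivial and $T_{x}(\varSigma)$ is a faithful six-dimensional real $\bR[G]$-module.

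Next I would argue that $T_{x}(\varSigma)$ is isomorphic to one of the modules $V_{*}(G_{i})$ recorded in Table \ref{table:simple}. The point is that each $G_{i}$ admits no faithful real module of dimension six other than these: by Proposition \ref{prop:nonabelian} and the representation-theoretic analysis in Section \ref{basic}, the nontrivial real irreducibles of $G_{i}$ of dimension at most six are precisely the six-dimensional modules $V_{*}(G_{i})$. In particular, for $PSU(4,2)$ one uses that its degree-five complex representations are \emph{not} of real type, so that no reducible faithful six-dimensional real module such as $\bR_{G} \oplus W_{5}$ can occur; consequently a faithful six-dimensional real module cannot split off any proper faithful summand and must coincide with some $V_{*}(G_{i})$. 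I expect this classification to be the main obstacle, and it is exactly what the computations behind Table \ref{table:simple} are meant to secure; once it is in hand the rest is bookkeeping.

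Finally, with $V=T_{x}(\varSigma)\cong V_{*}(G_{i})$, I would verify the five hypotheses of Lemma \ref{lem:S0} for the tuple $(Q(G_{i}),P_{2}(G_{i}),H(G_{i}),K(G_{i}))$, all of which are the conditions listed immediately before that lemma and read off from the explicit subgroup structure and from Table \ref{table:irr6}. Concretely, hypotheses $(1)$, $(2)$, $(4)$ of Lemma \ref{lem:S0} are conditions $(1)$, $(2)$, $(5)$ before it: $Q(G_{i})$ is a prime-power group with $\dim V^{Q(G_{i})}=0$ (order $7$ for $i=1,2$ and order $81$ for $i=3$); $H(G_{i})$ and $K(G_{i})$ generate $G_{i}$; and $\dim V^{P_{2}(G_{i})}=\dim V^{H(G_{i})}+\dim V^{K(G_{i})}$, which is $0=0+0$ for $V_{6.1}(G_{1}),V_{6.2}(G_{1})$ and $2=1+1$ in the remaining cases. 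Hypothesis $(3)$ follows from condition $(3)$, since $P_{2}(G_{i})=H(G_{i})\cap K(G_{i})$ is a $2$-group and hence a $2$-power-order subgroup of $H(G_{i})\cap K(G_{i})$. Hypothesis $(5)$ holds automatically, because condition $(4)$ gives $H(G_{i}),K(G_{i})\in\hen{G}^{2}_{2}\subset\hen{G}$. Lemma \ref{lem:S0} then yields $\varSigma^{G}\cong S^{0}$, i.e. $\varSigma^{G}$ consists of exactly two points, which completes the proof.
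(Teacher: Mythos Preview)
Your proposal is correct and follows essentially the same approach as the paper: identify $T_{x}(\varSigma)$ as a faithful six-dimensional irreducible $\bR[G_{i}]$-module (the paper phrases this as ``$G$ has no faithful irreducible $\bR[G]$-modules of dimensions $\leq 5$,'' which is equivalent to your analysis via simplicity and the $SO(5)$ classification), and then invoke Lemma~\ref{lem:S0} using the five conditions verified just before that lemma. The paper's proof is just the two-sentence version of what you wrote; your extra detail on faithfulness and the hypothesis check is accurate but not strictly needed.
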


\begin{proof}
Since $G$ has no faithful irreducible $\bR[G]$-modules of dimensions $\leq 5$,
the tangential $G$-module $T_{x}(\varSigma)$ is isomorphic to 
an irreducible $\bR[G]$-module of dimension $6$.
Therefore, Proposition \ref{prop:A7PSL27PSU42} follows immediately from Lemma \ref{lem:S0}.
\end{proof}

\begin{com}
As mentioned in the proof of \cite[Proposition 3.1]{LaiTra}, 
the linear actions on 
$V_{6.1}(G_1)$, $V_{6.2}(G_1)$, $V_{6.3}(G_1)$, $V_{6}(G_2)$, and $V_{6}(G_3)$
are not $2$-pseudofree, but properly $4$-pseudofree.
Therefore, the assertion of Proposition \ref{prop:A7PSL27PSU42} 
is not included in the result in \cite{MorNew2}.
\end{com}

\noindent
{\bf Proof of Lemma \ref{lem:nonabelian}.}
Let $H$ be a nonabelian minimal normal subgroup of $G$ and 
let $\varSigma$ be a homology $6$-sphere 
with orientation-preserving effective odd-Euler-characteristic $G$-action.
As $\varSigma^{G} \neq \emptyset$, 
we may assume that $G$ is a finite subgroup of $SO(6)$.
It follows from Proposition \ref{prop:SO6} that $H$ is isomorphic to either
\[
A_5,\, A_6,\, A_7,\, PSL(2,7),\, PSU(4,2),\, {\rm or}\; A_5 \times A_5.
\]
If $H$ is not isomorphic to $A_5$
then Propositions \ref{prop:A5A5}, \ref{prop:A6}, and \ref{prop:A7PSL27PSU42} show
that $\varSigma^{H}$ is a sphere of dimension $\leq 1$.
Therefore, by Lemma \ref{lem:lowsphere},
$\varSigma^{G}$ is also a sphere of dimension $\leq 1$.
If $H$ is isomorphic to $A_5$ then 
the assertion of Lemma \ref{lem:nonabelian}
is itself the assertion of Lemma \ref{lem:A5}.
\qed

\section{Proof of Lemma \ref{lem:abelian}}\label{abelian}

Throughout this section, let $\varSigma$ be a homology $6$-sphere
and it has an orientation-preserving effective $G$-action.

For a prime $p$, let $O_{p}(G)$ denote the $p$-Sylow subgroup of $F(G)$,
which is often called the {\it $p$-core} of $G$.
The $p$-core $O_{p}(G)$ of $G$ is the largest normal $p$-subgroup of $G$
and $F(G)$ is the Cartesian product of the nontrivial $p$-cores $O_{p}(G)$ of $G$.

\vskip0.1cm

We give a proof of Lemma \ref{lem:abelian} by using the results proved so far.

\vskip0.1cm

\noindent
{\bf Proof of Lemma \ref{lem:abelian}.}
If $\varSigma^{G} = \emptyset$ then the assertion is obvious.
Suppose that $\varSigma^{G} \neq \emptyset$ and
then we may assume that $G$ is a finite subgroup of $SO(6)$.
Furthermore, by Lemma \ref{lem:nonabelian}, 
we can assume that $G$ contains no nonabelian minimal normal subgroups.

Let $\rho : G \to SO(6)$ be a tangential representation of $G$ at $x \in \varSi^{G}$ and 
$\rho_{F(G)}: G \to O(n)$ a subrepresentation of $\rho$ 
corresponding to $T_{x}(\varSigma)_{F(G)}$.
By Propositions \ref{prop:SOV} and \ref{prop:FG}, we have:
\begin{itemize}
 \item[(1)] $K = \ker \rho_{F(G)}$ is isomorphic to 
 a finite subgroup of $SO(m)$ with $F(K)=E$, and
 \item[(2)] $G/K$ is isomorphic to a finite subgroup of $O(n)$.
\end{itemize}
Here, $m =\dim T_{x}(\varSigma)^{F(G)}$ and $n = \dim T_{x}(\varSigma)_{F(G)}$.
The assumption that $G$ contains no nonabelian minimal normal subgroups
implies that $K$ also contains no nonabelian minimal normal subgroups.
As $F(K)$ is trivial, $K$ must be trivial.
Thus, $G$ is isomorphic to a finite subgroup of $O(n)$.

Lemma \ref{lem:abelian} will be completed by proving the two assertions below.
\vskip0.1cm

\noindent
{\bf Assertion 1.} If $F(G)$ is noncyclic,
then $\chi(\varSi^{G})$ is an even integer.

\vskip0.1cm

As a $p$-core $O_{p}(G)$ is noncyclic,
we have either 
\[
\dim T_{x}(\varSi)^{O_{p}(G)} \leq 2 \;\;{\rm or}\; \dim T_{x}(\varSi)^{O_{2}(G)} = 3.
\]
If $\dim T_{x}(\varSi)^{O_{p}(G)} \leq 2$
then it follows from Lemma \ref{lem:lowsphere} that $\varSi^{G} \cong S^{k}$,
where $0 \leq k \leq 2$.
We next consider the case $\dim T_{x}(\varSi)^{O_{2}(G)} =3$.
Since $F(G)/O_{2}(G)$ is of odd order,
it holds that $\dim T_{x}(\varSigma)^{F(G)} = 1$ or $3$ for each $x \in \varSigma^{G}$. 
If $\dim T_{x}(\varSigma)^{F(G)} = 1$ for each $x \in \varSigma^{G}$,
then Lemma \ref{lem:onedim} indicates that 
$\chi(\varSigma^{G})$ is a nonnegative even integer.
If $\dim T_{x}(\varSigma)^{O_{2}(G)} = \dim T_{x}(\varSigma)^{F(G)} = 3$,
then $\varSigma^{F(G)}$ is a $\bZ_2$-homology $3$-sphere with $G/F(G)$-action. 
Now, $G$ is isomorphic to a finite subgroup of $O(3)$
containing no nonabelian minimal normal subgroups.
In fact, such a finite group $G$ is solvable (thus $G/F(G)$ is so).
Thus, by Proposition \ref{prop:Z2hom3A5},
we see that $\chi((\varSigma^{F(G)})^{G/F(G)})$ is a nonnegative even integer.

\vskip0.1cm

\noindent
{\bf Assertion 2.} If $F(G)$ is nontrivial and cyclic,
then $\chi(\varSi^{G})$ is an even integer.
\vskip0.1cm

Since $F(G)$ is cyclic and 
the restricted $F(G)$-action on $T_{x}(\varSigma)$ is orientation-preserving and effective,
we have $\dim T_{x}(\varSigma)^{F(G)} = 0$, $2$, or $4$.
To complete the proof,
we need to show that $\chi(\varSi^{G}) \equiv 0\;{\rm mod}\;2$ in the following three cases.

\begin{itemize}
  \item[$\rm(\,I\,)$] $\dim T_{x}(\varSi)^{F(G)}=4$ for some $x \in \varSi^{G}$.
  \item[$\rm(I\hspace{-.01em}I)$] $\dim T_{x}(\varSi)^{F(G)}=2$ for some $x \in \varSi^{G}$.
  \item[$\rm(I\hspace{-.15em}I\hspace{-.15em}I)$] $\dim T_{x}(\varSi)^{F(G)}=0$ 
  for some $x \in \varSi^{G}$.
\end{itemize}

Case $\rm(\,I\,)$.
Since $G$ is isomorphic to a finite subgroup of $O(2)$,
$G$ belongs to $\mathcal{G}_{1}^{2}$.
Therefore we have $\chi(\varSigma^{G}) \equiv 0\;{\rm mod}\;2$ by Lemma \ref{lem:euler}.

Case $\rm(I\hspace{-.01em}I)$.
We will identify $G$ with a finite subgroup of $O(4)$.
Let $L$ be a finite subgroup of $G$ 
contained in $SO(4)$ with $[G:L] \leq 2$ and
let $Z$ be the center of $SO(4)$ 
generated by the central inversion $- I$ in $SO(4)$,
i.e. $Z = \langle -I \rangle$.
The subgroup $L$ of $SO(4)$ acts effectively 
on $T_{x}(\varSigma)$ 
via $\rho : G \to SO(6)$.
If $-I \in L$ then $Z$ is characteristic 
in $L$ (thus $Z$ is normal in $G$) 
and we have $\dim T_{x}(\varSigma)^{Z} \leq 2$.
Therefore, by Lemma \ref{lem:lowsphere}, 
one has $\varSigma^{G} = (\varSigma^{Z})^{G/Z} \cong S^k$,
where $0 \leq k \leq 2$.
If $-I \not\in L$ then 
the double covering homomorphism $\pi : SO(4) \to SO(3) \times SO(3)$ 
maps $G$ isomorphically onto a subgroup of $SO(3) \times SO(3)$.
Thus $L$ is isomorphic to a finite subgroup of $SO(3) \times SO(3)$. 
As the subgroup $F(L)$ of $F(G)$ is cyclic,
$L$ belongs to $\hen{G}^{2}_{1}$ 
by Proposition \ref{prop:SO3SO3},
because $L$ contains no nonabelian minimal normal subgroups.
Therefore, by Lemmas \ref{lem:modp} and  \ref{lem:euler},
we have $\chi(\varSigma^{G}) \equiv 
\chi(\varSigma^{L}) \equiv 0\;{\rm mod}\;2$.

Case $\rm(I\hspace{-.15em}I\hspace{-.15em}I)$.
As $F(G)$ is cyclic, every $p$-core $O_{p}(G)$ is cyclic.
Therefore, for every prime $p$ dividing the order of $F(G)$,
we have $\dim T_{x}(\varSigma)^{O_{p}(G)}=0$, $2$, or $4$.
If $\dim T_{x}(\varSigma)^{O_{p}(G)}=0$ or $2$
then it follows from Lemma \ref{lem:lowsphere} 
that $\varSigma^{G} \cong S^{k}$, where $0 \leq k \leq 2$.
We next consider the case that
$\dim T_{x}(\varSigma)^{O_{p}(G)} = 4$ 
for every prime $p$ dividing the order of $F(G)$.
Let $\rho^{O_{p}(G)} : G \to O(4)$ and $\rho_{O_{p}(G)} :G \to O(2)$ be subrepresentations of $\rho$
corresponding to $V^{O_{p}(G)}$ and $V_{O_{p}(G)}$,
respectively, 
and let $H_{p}= \ker \rho^{O_{p}(G)}$ and $K_{p} = \ker \rho_{O_{p}(G)}$.
By $\dim T_{x}(\varSigma)^{F(G)}=0$, 
there is a prime $q$ such that $O_{q}(G) \cap H_{p} = E$ 
and that $\dim T_{x}(\varSigma)^{O_{p}(G) \times O_{q}(G)}=2$. 
Then, 
$\rho^{O_{p}(G) \times O_{q}(G)}$ is a subrepresentation of $\rho^{O_{p}(G)}$
and its complement  is $\rho_{O_{q}(G)}$, i.e. 
\[
\rho^{O_{p}(G)} = \rho_{O_{q}(G)} \oplus \rho^{O_{p}(G) \times O_{q}(G)} :
G \to O(2) \times O(2) \to O(4).
\]
Let $K_{q} = \ker \rho_{O_{q}(G)}$ and $H_{pq} = \ker \rho^{O_{p}(G) \times O_{q}(G)}$.
Then, $H_{p}$ coincides with $ K_{q} \cap H_{pq}$.
Since $H_{p} \cap K_{p} = K_{q} \cap H_{pq} \cap K_{p} = E$, we have a monomorphism
\[
G \to G/K_{q} \times G/H_{pq} \times G/K_{p} \to O(2) \times O(2) \times O(2).
\]
Thus, $G$ is isomorphic to a finite subgroup of $O(2) \times O(2) \times O(2)$.
As $F(G)$ is cyclic,
it follows from Proposition \ref{prop:DDD} and Lemma \ref{lem:euler} that $\chi(\varSigma^{G}) \equiv 0\;{\rm mod}\;2$.
\qed

\end{document}